\renewcommand{\Re}[0]{{\rm Re}\,}
\renewcommand{\Im}[0]{{\rm Im}\,}
\newcommand{\ca}[0]{\mathbbm{1}}
\newcommand{\C}[0]{{\mathbb C}}
\newcommand{\N}[0]{{\mathbb N}}
\newcommand{\pd}[0]{\partial}
\newcommand{\R}[0]{\mathbb{R}}
\newcommand{\cB}[0]{{\mathcal B}}
\newcommand{\cD}[0]{{\mathcal D}}
\newcommand{\cE}[0]{{\mathcal E}}
\newcommand{\cF}[0]{{\mathcal F}}
\newcommand{\cH}[0]{{\mathcal H}}
\newcommand{\cI}[0]{{\mathcal I}}
\newcommand{\cM}[0]{{\mathcal M}}
\newcommand{\cQ}[0]{{\mathcal Q}}
\newcommand{\cR}[0]{{\mathcal R}}
\newcommand{\cV}[0]{{\mathcal V}}
\newcommand{\oA}[0]{{\mathscr A}}
\newcommand{\oH}[0]{{\mathscr H}}
\newcommand{\oL}[0]{{\mathscr L}}
\newcommand{\oP}[0]{{\mathscr P}}
\newcommand{\Hmu}[0]{{\rm ({\bf H}$\gamma_{\infty}$)}}
\newcommand\bS{\mathbf{S}}
\newcommand{\nor}[1]{\| #1 \|}
\newcommand{\Nor}[1]{\left \| #1 \right \|}
\newcommand{\mn}[2]{\{ #1 : #2 \}}
\newcommand{\Mn}[2]{\left\{ #1 : #2 \right\}}
\newcommand{\sk}[2]{\langle #1 , #2\rangle}
\newcommand{\Sk}[2]{\left\langle #1 , #2\right\rangle}
\renewcommand{\div}[0]{{\rm div}\,}
\newcommand{\Dom}[0]{{\rm D}}
\newcommand{\Ran}[0]{{\rm R}}
\newcommand{\bena}[0]{{\mathbf 1}}
\newtheorem{theorem}{Theorem}
\newtheorem{lemma}[theorem]{Lemma}
\newtheorem{proposition}[theorem]{Proposition}
\newtheorem{corollary}[theorem]{Corollary}
\renewcommand\leq[0]{\leqslant}
\renewcommand\geq[0]{\geqslant}
\renewcommand\epsilon[0]{\varepsilon}
\renewcommand\theta[0]{\vartheta}
\newcommand\wrt{\,\text{\rm d}}
\renewcommand\mod[1]{\left\vert{#1}\right\vert}
\newcommand\norm[2]{{\left\Vert{#1}\right\Vert_{#2}}}
\newtheorem{prenotation}[theorem]{Notation}
\newenvironment{notation}%
{\begin{prenotation}\rm}{\end{prenotation}}
\newtheorem{preremark}[theorem]{Remark}  \newenvironment{remark}%
{\begin{preremark}\rm}{\end{preremark}}
\begin{document}

\title[Bounded holomorphic functional calculus]{Bounded holomorphic functional calculus for nonsymmetric Ornstein-Uhlenbeck operators}
\author[Carbonaro]{Andrea Carbonaro}
\author[Dragi\v{c}evi\'c]{Oliver Dragi\v{c}evi\'c}

\date{September 10, 2016}

\address{Andrea Carbonaro \\Universit\`a degli Studi di Genova\\ Dipartimento di Matematica\\ Via Dodecaneso\\ 35 16146 Genova\\ Italy }
\email{carbonaro@dima.unige.it}

\address{Oliver Dragi\v{c}evi\'c \\University of Ljubljana\\ Faculty of Mathematics and Physics\\ Jadranska 21, SI-1000 Ljubljana\\ Slovenia}
\email{oliver.dragicevic@fmf.uni-lj.si}

\begin{abstract}
We study bounded holomorphic functional calculus for nonsymmetric infinite dimensional Ornstein-Uhlenbeck operators $\oL$. We prove that if $-\oL$ generates an analytic semigroup on $L^{2}(\gamma_{\infty})$, then $\oL$ has bounded holomorphic functional calculus on $L^{r}(\gamma_{\infty})$, $1<r<\infty$, in any sector of angle $\theta>\theta^{*}_{r}$, where $\gamma_{\infty}$ is the associated invariant measure and $\theta^{*}_{r}$ the sectoriality angle of $\oL$ on $L^{r}(\gamma_{\infty})$. The angle $\theta^{*}_{r}$ is optimal. In particular our result applies to any nondegenerate finite dimensional Ornstein-Uhlenbeck operator, with dimension-free estimates. 
\end{abstract}

\maketitle

\section{Introduction}
For every $\theta\in (0,\pi)$ we define the open sector
$$
\bS_{\theta}=\{z\in\C\setminus\{0\}:\ |\arg z|<\theta\}
$$
and we denote by $H^\infty(\bS_{\theta})$ the algebra of all bounded holomorphic functions on $\bS_\theta$. If $m\in H^{\infty}(\bS_{\theta})$ we set $\|m\|_{\theta}=\sup\{|m(z)|: z\in\bS_{\theta}\}$.

If $\oA$ is a linear operator on a complex Banach space $X$ we denote, respectively, by $\sigma(\oA)$, $\Dom(\oA)$, ${\rm R}(\oA)$ and ${\rm N(\oA)}$ the spectrum, the domain, the range and the null-space of $\oA$. If $\oA$ is bounded then $\|\oA\|$ stands for its operator norm. We denote by $\cB(X)$ we class of all bounded operators on $X$.

Let $0\leq\theta<\pi$. We say that a densely defined closed operator $\oA$ on a complex Banach space $X$ is {\it sectorial of angle} $\theta$ if $\sigma(\oA)\subseteq \overline{\bS}_{\theta}$ and for all $\theta'\in (\theta,\pi)$ we have 
$$
\sup_{z\in\C\setminus\bS_{\theta'}}|z|\|(\oA-zI)^{-1}\|<\infty\,.
$$ 
In such a case the number 
$$
\omega(\oA):=\inf\mn{\theta\in[0,\pi)}{\oA \text{ is sectorial of angle }\theta }
$$
is called the {\it sectoriality angle} of $\oA$. Operators which are sectorial of some angle in $[0,\pi)$ will simply be called {\it sectorial}.

Suppose that $\oA$ is a one-to-one sectorial operator with dense range on a complex Banach space $X$ (by \cite[Theorem 3.8]{CDMY}, on reflexive Banach spaces every one-to-one sectorial operator has dense range). Then, if $\omega(\oA)<\theta<\pi$ and $f\in H^{\infty}(\bS_{\theta})$, we may define the closed, possibly unbounded operator $f(\oA)$ in such a way that, for $\alpha\in\C$ and $g\in H^{\infty}(\bS_{\theta})$,
$$
\aligned
\alpha f(\oA)+g(\oA)& =(\alpha f+g)(\oA)|_{\Dom(f(\oA))\cap\Dom(g(\oA))}\\ f(\oA)g(\oA)&=(fg)(\oA)|_{\Dom(g(\oA))\cap\Dom(fg(\oA))}\,.
\endaligned
$$ 
We refer the interested reader to \cite{Mc, CDMY, Haase} for an exhaustive treatment of this subject.

Suppose furthermore that $\theta\in(\omega(\oA),\pi)$. We say that $\oA$ has a {\it bounded $H^{\infty}(\bS_{\theta})$-calculus} if $m(\oA)\in\cB(X)$ whenever $m\in H^{\infty}(\bS_{\theta})$ and there exists $C>0$ such that
$$
\|m(\oA)\|\leq C\|m\|_{\theta},\quad \forall\  m\in H^{\infty}(\bS_{\theta}). 
$$ 
We say that $\oA$ has a {\it bounded $H^{\infty}$-calculus} if it has a bounded $H^{\infty}(\bS_{\theta})$-calculus for some $\theta>\omega(\oA)$. We define
$$
\omega_{H^{\infty}}(\oA)=\inf\{\theta\in (0,\pi):\ \oA\ \text{ has a bounded}\ H^{\infty}(\bS_{\theta})\text{-calculus}\},
$$
with the convention that $\omega_{H^{\infty}}(\oA)=+\infty$ if $\oA$ does not have a bounded $H^{\infty}$-calculus. It follows from definitions that for a sectorial operator $\oA$ we always have 
\begin{equation}\label{eq: *}
0\leq \omega(\oA)\leq\omega_{H^{\infty}}(\oA).
\end{equation}

It is an interesting and widely studied problem whether a sectorial operator has a bounded $H^{\infty}$-calculus \cite{Mc,McY,CDMY,KW}.
A. McIntosh \cite{Mc} proved that if $X=\cH$ is a complex Hilbert space and $\oA$ has a bounded $H^{\infty}$-calculus on $\cH$, then $\omega_{H^{\infty}}(\oA)=\omega(\oA)$. 
It was shown  by N. Kalton in \cite{Kalton} that this is not longer true in arbitrary Banach spaces (see also \cite[p. 27]{CDMY}), but it is still an open problem whether $\omega_{H^{\infty}}(\oA)=\omega(\oA)$ for sectorial operators with a bounded $H^{\infty}$-calculus on Lebesgue spaces.

Therefore in this context it becomes of interest to explicitly determine the angle $\omega_{H^{\infty}}(\oA)$, even for special classes of operators such as generators of semigroups with kernel bounds (see, for example, \cite{DM,DR,BK}), or such as generators of contraction semigroups on Lebesgue spaces which now we describe in more detail.

Let $(\Omega,\mu)$ be a $\sigma$-finite measure space. We say that $(T(t))_{t>0}$ is a {\it contraction semigroup} on $(\Omega,\mu)$ if $T(t)$ is a contraction on $L^{r}(\mu)$ for every $t>0$ and $r\in[1,\infty]$, and $(T(t))_{t>0}$ is strongly continuous on $L^{r}(\mu)$ for $1\leq r<\infty$ and weak* continuous on $L^{\infty}(\mu)$. Denote by $-\oA_{r}$ its generator on $L^{r}(\mu)$, $1< r<\infty$. 
A contraction semigroup is called {\it symmetric} if $T(t)$, $t>0$, are self-adjoint on $L^{2}(\mu)$. 
It is known that every contraction semigroup is subpositive \cite{CK} and \cite[Theorems 4.1.2, 4.1.3]{KRE}, therefore it has a dilatation to a group \cite[pp. 737-738]{Fe}. It follows from the Coifman-Weiss transference principle \cite{CW, CRW} that $\omega_{H^{\infty}}(\oA_{r})\leq \pi/2$, for every $r\in(1,\infty)$; see \cite[Theorem 2]{cowling} for the symmetric case and \cite[Theorem 2]{Duong} for the general case.

In this picture, symmetric contraction semigroups deserve a special attention. For generators of symmetric Markovian semigroups the inequality $\omega_{H^{\infty}}(\oA_{r})\leq \pi/2$ was originally proved by E. M. Stein \cite[Corollary 3, p. 121]{stein}, 
while M. Cowling \cite[Theorem 2]{cowling} combined the Coifman-Weiss transference mentioned above with a complex interpolation argument and proved that $\omega_{H^{\infty}}(\oA_{r})\leq \pi|1/2-1/r|$, $1<r<\infty$, for all generators of symmetric contraction semigroups. Cowling's result has been improved by P. C.~Kunstmann and \v{Z}.~\4trkalj \cite{KS} in the special case of sub-Markovian semigroups and by C. Kriegler \cite[Remark 2]{KR} in the case of symmetric contraction semigroups.
\medskip

For $\rho\in (-\pi/2,\pi/2)$ we will throughout the paper use the convention
$$
\rho^*=\pi/2-\rho\,.
$$
Moreover, for $r\in(1,\infty)$ we set $\phi_r=\arccos|1-2/r|$. Note that
$$
\phi_r^*= \arctan\frac{|r-2|}{2\sqrt{r-1}}.
$$
The two authors of the present paper proved in \cite{CD-mult} the optimal bound 
$$
\omega_{H^{\infty}}(\oA_{r})\leq \phi_r^*,\quad 1<r<\infty,
$$
for all generators of {\sl symmetric} contraction semigroups (see \cite[Theorem 1]{CD-mult} for a more accurate statement of this result).

This ``universal'' multiplier theorem cannot be improved, because as a consequence of a result by J. B. Epperson \cite{E} and of inequality \eqref{eq: *} one yields, for all $1<r<\infty$,
$$
\omega_{H^{\infty}}(\oL^{ou}_{r})\geq\omega(\oL^{ou}_{r})=\phi_r^*,
$$
where $\oL^{ou}$ denotes the symmetric finite dimensional Ornstein-Uhlenbeck operator with diffusion matrix $Q=I$ and drift matrix $A=I$, that will be defined in \eqref{d: ou}.
See also \cite[Theorem 2]{fcou}, \cite[Theorem 2.2]{HMM} and \cite{MMS} for a sharp multiplier result for $\oL^{ou}$.

Note, however, that for some generators of symmetric contraction semigroups it may happen that a sharper bounded functional calculus is available. See, for example, \cite{Mihlin1, Mihlin2, H, CS}.

In this paper we study optimal bounded $H^{\infty}$-calculus for a specific subclass of generators of nonsymmetric contraction semigroups: the class formed by Ornstein-Uhlenbeck operators on separable real Banach spaces which generate analytic semigroups with respect to associated invariant measures. They will be defined in Section~\ref{s: 2} in the nondegenerate finite dimensional case, and in Section~\ref{I: s: OU} in the general case.

\subsection*{Main result} 
Let $-\oL$ denote the generator of an analytic Ornstein-Uhlenbeck semigroup. We prove in Theorem~\ref{t: fc principal} and Theorem~\ref{I: t: fc principal} that, for $r\in(1,\infty)$,
\begin{equation}\label{eq: **}
\omega_{H^{\infty}}(\oL_{r})=\omega(\oL_{r})\,.
\end{equation}
The sectoriality angle of $\oL_{r}$ was calculated by R. Chill, E. Fa\v{s}angov\'a, G. Metafune and D. Pallara  \cite[Theorem 2 and Remark 6]{CFMP} in the nondegenerate finite-dimensional case, and by J. Maas and J.M.A.M. van Neerven in the general case \cite[Theorem 3.4 and Theorem 3.5]{MvN}. They proved that,  for $r\in(1,\infty)$,
$$
\omega(\oL_{r})=\arctan\frac{\sqrt{(r-2)^{2}+r^{2}(\tan\theta^{*}_{2})^{2}}}{2\sqrt{r-1}},
$$
where $\theta^{*}_{2}=\omega(\oL_{2})$; see Proposition~\ref{p: properties of L2}, Remark~\ref{r: optimal} and  Proposition~\ref{I: p: OU on Lp}. Since for symmetric Ornstein-Uhlenbeck operators one has $\theta^{*}_{2}=0$, we recover in this particular case the above-mentioned Epperson's result \cite{E} and the sharp angle of \cite[Theorem 1]{CD-mult}. 

As far as we are aware, our result is the first example of an explicit calculation of the functional calculus angle for any nonsymmetric Ornstein-Uhlenbeck operator.

\subsection*{Outline of the proofs} 
We now briefly describe the technique we utilise for proving equality \eqref{eq: **}. Our approach is based on \cite{CD-mult}, which was the first case of Bellman functions being applied for the study of spectral multipliers.

By a general result of M. Cowling, I. Doust, A. McIntosh and  A. Yagi \cite[Theorem 4.6 and Example 4.8]{CDMY}, in order to prove \eqref{eq: **} it suffices to establish certain bilinear estimates, of the type \eqref{eq: fc  bilinear}, involving $\oL_{r}$ and the  associated Ornstein-Uhlenbeck semigroup with complex time $te^{\pm i\theta}$ for $0\leq\theta<
\omega(\oL_{r})^*$. The classical approach for proving \eqref{eq: fc  bilinear} is based on square functions; we choose a different approach, avoiding square functions and dealing with bilinear integrals directly. 

Namely, in Theorem~\ref{t: heat flow}, by using the so-called Nazarov-Treil Bellman function $\cQ$ (defined in Section~\ref{s: Nazarov-Treil}) and a heat-flow argument, we reduce the proof of the bilinear estimates \eqref{eq: fc  bilinear}
to the verification of an integral condition \eqref{eq: heat flow infinitesimal} involving the Bellman function $\cQ$, the generator $\oL_{r}$ and its adjoint.
It turns out that the integral condition \eqref{eq: heat flow infinitesimal} of Theorem~\ref{t: heat flow} is nothing but an extension of the classical Lumer-Phillips condition for contraction semigroups in $L^{r}$ spaces.

We prove that the Ornstein-Uhlenbeck operator $\oL_{r}$ and the Bellman function $\cQ$ satisfy the above-mentioned integral condition \eqref{eq: heat flow infinitesimal}: in Section~\ref{s: proof of mult thm} we do this in the nonndegenerate finite dimensional case, and in Section~\ref{I: s: proof of mult thm} in the general case.

The principal ingredient of these proofs is a new convexity property of $\cQ$ stated in Theorem~\ref{t: convexity} and proved in Section~\ref{s: proof of convexity thm}.

Among other results, for the proof of Theorem~\ref{t: convexity} we utilise in Proposition~\ref{p: convexity of Fr} the calculation of the analyticity angle of the Ornstein-Uhlembeck semigroup done by Chill, Fa\v{s}angov\'a,  Metafune and Pallara in \cite[Theorem 2]{CFMP} and \cite[Theorem 1.1]{CFMP2}.

The reason why the above-described procedure leads to the identification $\omega_{H^{\infty}}(\oL_{r})=\omega(\oL_{r})$  
is that the Ornstein-Uhlenbeck operators enjoy the property that the angle $\omega(\oL_{r})^*$ coincides with the contractivity  angle on $L^{r}$ of the associated semigroup (see Proposition~\ref{p: OU on Lp}, Remark~\ref{r: optimal} and Proposition~\ref{I: p: OU on Lp}).

Both in finite dimension (Theorem~\ref{t: fc principal}) and in infinite dimension (Theorem~\ref{I: t: fc principal}) we utilise a representation of the Ornstein-Uhlenbeck operator $\oL$ in divergence form. 
While in finite dimension this representation follows from the definition (see \eqref{d: ou} and \eqref{eq: L Divform}), the analogous formula \eqref{I: eq: Divform} in infinite dimension is more delicate, and it was proved in \cite{Fu} and \cite{BRS} in the Hilbert space setting and extended to the Banach space setting in \cite{MvN}.
Once we get the divergence-form representation \eqref{I: eq: Divform} of $\oL$, the proof of Theorem~\ref{I: t: fc principal} becomes a natural generalisation of that of Theorem~\ref{t: fc principal}.

The theory of infinite dimensional Ornstein-Uhlenbeck operators is very rich; see, for example, \cite{DPZ,CG,Goldys,GGvN,vN,GvN,M} and the references therein.
Therefore, with the aim of highlighting the role played by Bellman function in our proofs, we decided to keep the finite dimensional case separated from the infinite dimensional one.

\section{Notation}
For each $k\in [0,\infty]$ we denote by $C^{k}_{b}(\R^{n})$ the class of bounded complex functions on $\R^{n}$ with bounded and continuous partial derivatives up to the order $k$. If $f,g$ are complex functions on some sets $X,Y$, respectively, then $f\otimes g$ is the function on $X\times Y$ mapping $(x,y)\mapsto f(x)g(y)$. 

We set $\C_+=\{z\in\C:\ \Re z>0\}$. If $\Omega\subset\C$ we denote its closure by ${\overline\Omega}$.

If $\oH$ is a complex Hilbert space, $\sk{\cdot}{\cdot}_{\oH}$ denotes the inner product on $\oH$ and $\oA$ is a linear operator on $\oH$, then we denote by $W(\oA)$ the numerical range of $\oA$; i.e. we set
$$
W(\oA)=\{\sk{\oA h}{h}_{\oH}:\ h\in \Dom(\oA),\ \norm{h}{\oH}=1\}.
$$ 
If $T\in\cB(\oH)$, then we denote respectively by $T_{{\bf s}}$ and $T_{{\bf a}}$ the symmetric and the antisymmetric part of $T$; i.e.
$$
T_{{\bf s}}=\frac{T+T^{*}}{2},\quad T_{{\bf a}}=\frac{T-T^{*}}{2},
$$
where $T^{*}$ is the adjoint of $T$ with respect to $\sk{\cdot}{\cdot}_{\oH}$. We have a pair of simple yet useful identities:
\begin{equation}
\label{eq: sim and antisim}
\Re\sk{T\xi}{\xi}=\sk{T_{{\bf s}}\xi}{\xi}
\hskip 30pt \text{and}\hskip 30pt 
i\Im\sk{T\xi}{\xi}=\sk{T_{{\bf a}}\xi}{\xi}
\,.
\end{equation}

If $\cH$ is a real Hilbert space and $T\in\cB(\cH)$, we denote, respectively, by $\cH^\C$ and $T^{\C}$ the complexification of $\cH$ and $T$. If there is no risk of ambiguity, we only write $T$ for denoting the complexification of $T$.

For $n\in\N=\{1,2,\dots\}$, we denote by $\C^{n,n}$ the space of all complex $n\times n$ matrices, and by $\R^{n,n}$ its subspace consisting of real $n\times n$ matrices. We canonically identify matrices in $\C^{n,n}$ with operators acting on the complex Hilbert space $\C^{n}$ endowed with the scalar product
$$
\sk{z}{w}_{\C^n}=\sum_{j=1}^nz_j\overline w_j.
$$
If $(\Omega,\mu)$ is a $\sigma$-finite measure space, then the associated complex and real Lebesgue spaces are denote respectively by $L^{r}(\mu)$ and $L^{r}_{\R}(\mu)$, $1\leq r\leq\infty$.

\section{Finite dimensional nonndegenerate Ornstein-Uhlenbeck operators}
\label{s: 2} 
Let $Q,A\in \R^{n,n}$ be such that $Q$ is symmetric and positive definite and $\sigma(A)\subset \C_{+}$. For each $t>0$ set $S(t)=e^{-tA}$ and define the matrices
$$
Q_{t}=\int^{t}_{0}S(u)QS^{*}(u)\wrt u,\quad Q_{\infty}=\int^{\infty}_{0}S(u)QS^{*}(u)\wrt u.
$$
Assumptions on $Q$ and $A$ ensure that $Q_{t}$ and $Q_{\infty}$ are well defined, symmetric and positive definite. A simple calculation (see for example \cite[Lemma 2.1]{MPRS}) shows that $Q_{\infty}$ solves the Lyapunov equation

\begin{equation}\label{eq: fund identity}
AQ_{\infty}+Q_{\infty}A^{*}=Q.
\end{equation}
For each $t\in (0,\infty]$ we denote by $\gamma_{t}$ the centered Gaussian measure on $\R^{n}$ with covariance matrix $Q_{t}$; i.e. we set
$$
\wrt \gamma_{t}(x)=\frac{1}{(2\pi)^{n/2}({\rm det}Q_{t})^{1/2}}\exp\left(-\frac{\sk{Q^{-1}_{t}x}{x}}{2}\right)\wrt x,\quad t\in(0,\infty].
$$
The Ornstein-Uhlenbeck semigroup $(T(t))_{t>0}$ associated with $(S(t))_{t>0}$ and $Q$ is defined on, say, $C_{b}(\R^{n})$ by the Kolmogorov's formula 
$$
T(t)f(x)=\int_{\R^{n}}f(S(t)x+y)\wrt\gamma_{t}(y),\quad x\in\R^{n}\,.
$$
It is well-known that the measure $\gamma_{\infty}$ is invariant under the action of $T(t)$, $t>0$, and that $(T(t))_{t>0}$ extends to a positivity preserving semigroup of contractions on $L^{r}(\gamma_{\infty})$, $1\leq r\leq\infty$, which is strongly continuous for $1\leq r<\infty$ and weak* continuous for $r=\infty$ (see, for example, \cite[Chapter 9]{lorenzi} and the references therein).

We will denote by $-\oL_{r}$ the generator of $(T(t))_{t>0}$ on $L^{r}(\gamma_{\infty})$, $1\leq r<\infty$. It is known that $C^{\infty}_{c}(\R^{n})$ is a core for $\oL_{r}$; see \cite{MPP,MPRS} and \cite[Lemma~9.3.13]{lorenzi}. The action of the Ornstein-Uhlenbeck operator on $C^{\infty}_{c}(\R^{n})$ is explicitly given by the formula 

\begin{equation}\label{d: ou}
\oL f(x)=-\frac{1}{2}\div(Q\nabla f)(x)+\sk{\nabla f(x)}{Ax}_{\C^{n}},
\quad x\in\R^{n},\quad f\in C^{\infty}_{c}(\R^{n}).
\end{equation}
We call $Q$ and $A$, respectively, the {\it diffusion} and the {\it drift} matrix of $\oL$. The usual symmetric finite-dimensional Ornstein-Uhlenbeck operator is obtained from this scheme by choosing $Q=A=I$.

\medskip
Denote by $\nabla^{*}_{\infty}$ the formal adjoint of $\nabla$ with respect to the scalar product in $L^{2}(\gamma_{\infty})$. Then for every $\omega\in C^{\infty}_{c}(\R^{n},\C^{n})$ we have
$$
\nabla^{*}_{\infty}\omega(x)=-{\rm div}\omega(x)+\Sk{\omega(x)}{Q^{-1}_{\infty}x}_{\C^{n}}
,\quad x\in\R^{n}.
$$
From this and the identity \eqref{eq: fund identity}, one rapidly sees that
\begin{equation}\label{eq: L Divform}
\oL f=\nabla^{*}_{\infty}(Q_{\infty}A^{*}\nabla f),\quad f\in C^{\infty}_{c}(\R^{n}).
\end{equation}
Set
$$
B=Q_{\infty}A^{*}.
$$
Identity \eqref{eq: fund identity} reads as $B_{{\bf s}}=Q/2$, so that
\begin{equation}\label{eq: accretive B}
\Re\sk{B\xi}{\xi}=\sk{B_{{\bf s}}\xi}{\xi}=\frac{1}{2}\sk{Q\xi}{\xi}\geq \lambda |\xi|^{2},\quad \xi\in\C^{n},
\end{equation}
for some $\lambda>0$. Therefore, $B$ is a strictly accretive real matrix.

\subsection*{Analitcity of the Ornstein-Uhlenbeck semigroup} 
Consider the sesquilinear form on $L^{2}(\gamma_{\infty})$ defined by
\begin{equation}\label{eq: sesquilinear form}
\mathfrak{a}(f,g)=\int_{\R^{n}}\sk{B\nabla f}{\nabla g}\wrt\gamma_{\infty},\quad \Dom({\mathfrak a})=W^{1,2}(\gamma_{\infty}),
\end{equation}
where
$$
W^{1,2}(\gamma_{\infty})=
\Mn{f\in L^{2}(\gamma_{\infty})}{\partial_{i}f\in L^{2}(\gamma_{\infty}),\ i=1,\dots,n}.
$$
It follows from \eqref{eq: accretive B} that  the form $\mathfrak a$ is densely defined, closed, continuous and accretive. Therefore, by means of the theory of sesquilinear forms \cite{K,O}, $\mathfrak a$ defines an accretive operator on $L^{2}(\gamma_{\infty})$, that we temporarily denote by $(L_{2}, \Dom(L_{2}))$. It is known that, in fact, $L_{2}=\oL_{2}$; see \cite[Theorem 1.2 in Appendix A]{Eberle}.

\begin{remark}
\label{r: symmetric OU}
It is known \cite[Proposition 9.3.10]{lorenzi} that 
$\oL_{2}$ is self-adjoint if and only if the matrix $B=Q_{\infty}A^{*}$ is symmetric. By identity \eqref{eq: fund identity}, this is equivalent to $B=Q/2$.
\end{remark}

Inequality \eqref{eq: accretive B} implies that the numerical range of $B$ is contained in some sector of angle less than $\pi/2$. Let $\theta^{*}_{2}=\theta^{*}_{2}(B)\in [0,\pi/2)$ be the {\it smallest} such angle, 
\label{t2} i.e. $\overline\bS_{\theta^*_2}$ is the smallest closed sector containing $W(B)$. It follows from the definition of Ornstein-Uhlenbeck operator by means of the sesquilinear form $\mathfrak a$ that $W(\oL_{2})\subseteq W(B)\subseteq\overline{\bS}_{\theta^{*}_{2}}$. Consequently, by Lumer-Phillips theorem
the semigroup $(T(t))_{t>0}$ extends to an analytic contraction semigroup on $L^{2}(\gamma_{\infty})$ in the sector $\bS_{\theta_{2}}$; see, for example, \cite[Theorem~1.54]{O}.
It was proved in \cite[Remark 2 and Remark 6]{CFMP} that 
\begin{equation}\label{eq: theta*2}
\theta^{*}_{2}=\arctan\norm{Q^{-1/2}(B-B^{*})Q^{-1/2}}{},
\end{equation}
and that $\theta^{*}_{2}$ coincides with the {\it spectral angle} of $Q^{-1/2}BQ^{-1/2}$ \cite[p. 705 and 708]{CFMP},
that is, $\theta^{*}_{2}$ is the smallest angle $\varphi$ such that $\sigma(Q^{-1/2}BQ^{-1/2})$ is contained in $\overline{\bS}_{\varphi}$. 
See also \eqref{eq: theta and norm} and \eqref{I: eq: theta*2}.

Moreover, in \cite[Theorem 1 and Remark 6]{CFMP} the domain of analyticity of the semigroup on $L^{2}(\gamma_{\infty})$ was characterised. In particular it was proved that the angle $\theta_{2}$ is optimal; i.e. $(T(t))_{t>0}$ does not have a 
bounded analytic extension to any sector larger than $\bS_{\theta_{2}}$. Namely we have the following result.

\begin{proposition}[\cite{CFMP}]\label{p: properties of L2} 
Let $\theta_{2}^*=\theta_{2}^*(B)$ be as above.
Then,
\begin{itemize}
\item[{\rm (i)}] $(T(t))_{t>0}$ extends to an analytic contraction semigroup on $L^{2}(\gamma_{\infty})$ in the sector $\bS_{\theta_{2}}$.
\item[{\rm (ii)}] If $(T(t))_{t>0}$ extends to a bounded analytic semigroup on $L^{2}(\gamma_{\infty})$ in the sector $\bS_{\theta}$, for some $\theta\in (0,\pi/2)$, then $\theta\leq\theta_{2}$.
\end{itemize}
\end{proposition}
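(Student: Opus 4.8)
The plan is to treat the two assertions separately. Assertion (i) is essentially already contained in the discussion preceding the statement: having recorded that the form $\mathfrak a$ is densely defined, closed, continuous and accretive and that $W(\oL_2)\subseteq W(B)\subseteq\overline{\bS}_{\theta_2^*}$, one checks that for every $\psi$ with $|\psi|<\theta_2=\pi/2-\theta_2^*$ the rotated form $e^{i\psi}\mathfrak a$ is still closed, continuous and accretive, because $e^{i\psi}W(B)\subseteq\overline{\bS}_{\theta_2^*+|\psi|}\subseteq\{\Re z\geq0\}$; hence $-e^{i\psi}\oL_2$ generates a contraction semigroup, and these semigroups assemble into a holomorphic contraction extension of $(T(t))_{t>0}$ to $\bS_{\theta_2}$. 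This is precisely \cite[Theorem~1.54]{O} applied to the m-sectorial operator $\oL_2$ of angle $\theta_2^*$.

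For the optimality (ii) I would argue by contradiction, exploiting the action of the semigroup on polynomials of bounded degree. Suppose $(T(t))_{t>0}$ extends to a bounded analytic semigroup on $\bS_\theta$, with $M:=\sup_{z\in\bS_\theta}\|T(z)\|<\infty$. Since $\gamma_\infty$ is Gaussian, the space $\cF_k\subset L^2(\gamma_\infty)$ of polynomials of degree $\leq k$ is finite-dimensional and, by the Kolmogorov formula, invariant under each $T(t)$; since $t\mapsto T(t)f$ is then $\cF_k$-valued and real-analytic, its holomorphic continuation remains in $\cF_k$, so $T(z)\cF_k\subseteq\cF_k$ for all $z\in\bS_\theta$. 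On the linear functions $f_a(x)=\sk{a}{x}$ formula \eqref{d: ou} gives $\oL_2 f_a=f_{A^*a}$, hence $T(t)f_a=f_{e^{-tA^*}a}$; and because $\int|f_a|^2\wrt\gamma_\infty=\sk{Q_\infty a}{a}$, the isometry $a\mapsto Q_\infty^{1/2}a$ conjugates $T(t)$, on this first chaos $\cH_1$, to $e^{-tN}$ with $N=Q_\infty^{1/2}A^*Q_\infty^{-1/2}$. Computing $T(t)\sk{a}{x}^k$ from the Kolmogorov formula and discarding the lower-degree terms then shows that the compression of $T(t)$ to the $k$-th Wiener chaos $\cH_k=\cF_k\ominus\cF_{k-1}$ is, via the classical identification of $\cH_k$ with the $k$-th symmetric tensor power of $\cH_1$, the restriction of $(e^{-tN})^{\otimes k}$ to symmetric tensors; by analytic continuation inside $\cF_k$ this persists with $t$ replaced by $z\in\bS_\theta$.

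Two elementary facts then finish the proof. First, for any matrix $R$ one has $\|R^{\otimes k}|_{\mathrm{sym}}\|=\|R\|^k$ (the bound $\geq$ by testing on $v^{\otimes k}$ for $v$ a norming unit vector; $\leq$ is trivial). Hence, compressing to $\cH_k$, $M\geq\|T(z)|_{\cF_k}\|\geq\|e^{-zN}\|^k$ for every $k\in\N$ and every $z\in\bS_\theta$, which forces $\|e^{-zN}\|\leq1$ throughout $\bS_\theta$. Second, $\|e^{-zN}\|\leq1$ on $\bS_\theta$ means that each $-e^{i\psi}N$ with $|\psi|<\theta$ is dissipative, i.e. $\Re\big(e^{i\psi}\sk{N\xi}{\xi}\big)\geq0$ for all $\xi\in\C^n$; intersecting over $|\psi|<\theta$ yields $W(N)\subseteq\overline{\bS}_{\pi/2-\theta}$. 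Finally, the substitution $\xi=Q_\infty^{1/2}w$ gives $\sk{N\xi}{\xi}=\sk{Bw}{w}$ and $|\xi|^2=\sk{Q_\infty w}{w}>0$, so $W(N)$ has the same arguments as $W(B)$; since $\overline{\bS}_{\theta_2^*}$ is the smallest closed sector containing $W(B)$, it is also the smallest one containing $W(N)$, whence $\theta_2^*\leq\pi/2-\theta$, i.e. $\theta\leq\theta_2$.

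I expect the main obstacle to be the identification---with the correct Hilbert-space structures---of the compression of $(T(z))$ to the degree-$k$ chaos with the $k$-th symmetric tensor power of $e^{-zN}$, and the verification that this identification survives the analytic continuation from $t>0$ to $z\in\bS_\theta$. Everything surrounding that step---the form-theoretic proof of (i), the tensor-power norm identity, and the passage from $\|e^{-zN}\|\leq1$ to the numerical-range bound and then to $\theta\leq\theta_2$---is routine.
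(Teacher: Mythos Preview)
Your treatment of (i) is exactly the paper's: the discussion preceding the proposition already records that $W(\oL_2)\subseteq W(B)\subseteq\overline{\bS}_{\theta_2^*}$ and invokes \cite[Theorem~1.54]{O}, and the paper gives no further argument.

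For (ii) the paper gives no proof at all; the proposition is simply attributed to \cite{CFMP} (specifically to \cite[Theorem~1 and Remark~6]{CFMP}). Your chaos/tensor-power argument is therefore not a comparison target but an independent proof, and it is correct. In fact it is precisely the mechanism the paper itself appeals to later in the infinite-dimensional setting: equation~\eqref{I: eq: sqz*} expresses $T(t)$ as the second quantisation $\Gamma(S_\infty^*(t))^\C$, and the paper notes right afterwards that bounded analyticity of $T$ on $\bS_\theta$ is equivalent to \emph{contractive} analyticity of $S_\infty^*(t)^\C$ on $\bS_\theta$. Your argument is the finite-dimensional, hands-on version of this: the conjugated first-chaos generator $N=Q_\infty^{1/2}A^*Q_\infty^{-1/2}$ plays the role of $A_\infty^*$, and your tensor-power norm identity $\|R^{\otimes k}|_{\mathrm{sym}}\|=\|R\|^k$ is what forces boundedness of $\Gamma$ to collapse to contractivity of $R$. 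The final step, matching the numerical-range sector of $N$ with that of $B$ via $\sk{N\xi}{\xi}=\sk{Bw}{w}$, is clean and correct.

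One simplification you may want: because $T(t)=\Gamma(S_\infty^*(t))^\C$, each Wiener chaos $\cH_k$ is genuinely \emph{invariant} under $T(t)$ (not merely $\cF_k$), so your ``compression'' is in fact a restriction, and the analytic-continuation step on $\cF_k$ can be done directly on $\cH_k$. This removes the need to ``discard lower-degree terms'' and makes the identification with $(e^{-zN})^{\otimes k}|_{\mathrm{sym}}$ immediate.
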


\begin{remark}
It follows from \eqref{eq: theta*2} and Remark~\ref{r: symmetric OU} that $\theta_{2}=\pi/2$ if and only if $\oL_{2}$ is self-adjoint; see also \cite[Remark 3]{CFMP}.  
\end{remark}

We next turn to the analyticity properties of $T(t)$ on $L^r(\mu_\infty)$ for $1<r<\infty$, $r\ne2$. 

\begin{notation}
\label{n: angle}
Following \cite{CFMP}, for $1<r<\infty$ and with $\theta^{*}_{2}$ as on page \pageref{t2}, define
\begin{equation}
\label{eq: 1 theta r}
\theta^{*}_{r}=\theta^{*}_{r}(B)=\arctan\frac{\sqrt{(r-2)^{2}+r^{2}(\tan\theta^{*}_{2})^{2}}}{2\sqrt{r-1}}\,.
\end{equation}
An equivalent way of introducing $\vartheta_r^*\in[0,\pi/2)$ is through the identity

\begin{equation}\label{eq: alternative}
\sin\vartheta_r=\sin\phi_r\sin\vartheta_2\,,
\end{equation}
where $\theta_{r}=\pi/2-\theta^{*}_{r}$ and $\phi_r=\arccos\mod{1-2/r}$. 
It follows directly from \eqref{eq: 1 theta r} or \eqref{eq: alternative} that $\theta^{*}_{r}=\theta^{*}_{r/(r-1)}$ for $1<r<\infty$. 
\end{notation}

The following result, proved by Chill, Fa\v{s}angov\'a, Metafune and Pallara, extends Proposition~\ref{p: properties of L2} to the case $r\neq 2$ and it is closely related to our paper.

\begin{proposition}[{\cite[Theorem~2 and Remark~6]{CFMP}}]\label{p: OU on Lp}
Suppose that $1<r<\infty$. Then,
\begin{itemize}
\item[{\rm (i)}] $(T(t))_{t>0}$ extends to an analytic contraction semigroup on $L^{r}(\gamma_{\infty})$ in the sector $\bS_{\theta_{r}}$.
\item[{\rm (ii)}] If $(T(t))_{t>0}$ extends to a bounded analytic semigroup on $L^{r}(\gamma_{\infty})$ in the sector $\bS_{\theta}$, for some $\theta\in (0,\pi/2]$, then $\theta\leq\theta_{r}$.
\end{itemize}
\end{proposition}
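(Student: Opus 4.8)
The plan is to reduce both statements to the divergence-form representation \eqref{eq: L Divform}, $\oL f=\nabla^{*}_{\infty}(B\nabla f)$ for $f\in C^{\infty}_{c}(\R^{n})$, together with the $L^{2}$-information of Proposition~\ref{p: properties of L2} as encoded in \eqref{eq: theta*2}. For (i), fix $\psi$ with $|\psi|<\theta_{r}$. Since $-\oL_{r}$ already generates a $C_{0}$-contraction semigroup on $L^{r}(\gamma_{\infty})$ we have $\sigma(\oL_{r})\subseteq\{\Re z\geq0\}$, so $\lambda I+e^{\pm i\psi}\oL_{r}$ is boundedly invertible for every $\lambda>0$; by the Lumer--Phillips theorem it then suffices to show that $e^{\pm i\psi}\oL_{r}$ is accretive on $L^{r}(\gamma_{\infty})$, that is
$$
\Re\Big(e^{\pm i\psi}\int_{\R^{n}}(\oL_{r}f)\,|f|^{r-2}\overline{f}\,\wrt\gamma_{\infty}\Big)\geq0
$$
for $f$ in the core $C^{\infty}_{c}(\R^{n})$ (this extends to the whole domain by density, the duality map $f\mapsto|f|^{r-2}f$ being continuous from $L^{r}(\gamma_{\infty})$ to $L^{r'}(\gamma_{\infty})$). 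Using \eqref{eq: L Divform}, the formula for $\nabla^{*}_{\infty}$, and the standard regularisation replacing $|f|^{r-2}f$ by $(|f|^{2}+\varepsilon)^{(r-2)/2}f$ before letting $\varepsilon\to0$, integration by parts gives
$$
\int_{\R^{n}}(\oL_{r}f)\,|f|^{r-2}\overline{f}\,\wrt\gamma_{\infty}=\int_{\R^{n}}\sk{B\nabla f}{\nabla(|f|^{r-2}f)}\,\wrt\gamma_{\infty},
$$
and the matter is reduced to a pointwise inequality.

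The core computation is this: writing $f=|f|e^{i\alpha}$ and putting $a=\nabla|f|$, $b=|f|\nabla\alpha\in\R^{n}$, one checks, using $B_{\bf s}=Q/2$, the pointwise identity
$$
\sk{B\nabla f}{\nabla(|f|^{r-2}f)}=|f|^{r-2}\Big(\tfrac12\big[(r-1)\sk{Qa}{a}+\sk{Qb}{b}\big]+i\,\sk{\big(\tfrac{r-2}{2}Q-rB_{\bf a}\big)a}{b}\Big),
$$
whose real part $X$ is nonnegative. After the substitution $a=Q^{-1/2}\widetilde a$, $b=Q^{-1/2}\widetilde b$ and with $C=Q^{-1/2}B_{\bf a}Q^{-1/2}$ (real, antisymmetric, $\norm{C}{}=\tfrac12\tan\theta^{*}_{2}$ by \eqref{eq: theta*2}), accretivity of $e^{\pm i\psi}\oL_{r}$ becomes the requirement that $|Y|\leq X\cot|\psi|$ for all $\widetilde a,\widetilde b\in\R^{n}$, where $X=\tfrac12[(r-1)|\widetilde a|^{2}+|\widetilde b|^{2}]$ and $Y=\sk{(\tfrac{r-2}{2}I-rC)\widetilde a}{\widetilde b}$. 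Since
$$
\sup_{\widetilde a,\widetilde b}\frac{|\sk{D\widetilde a}{\widetilde b}|}{(r-1)|\widetilde a|^{2}+|\widetilde b|^{2}}=\frac{\norm{D}{}}{2\sqrt{r-1}}
$$
by the arithmetic--geometric mean inequality (the supremum being attained at suitable real vectors), and $\tfrac{r-2}{2}I-rC$ is a normal matrix with $\norm{\tfrac{r-2}{2}I-rC}{}=\tfrac12\sqrt{(r-2)^{2}+r^{2}\tan^{2}\theta^{*}_{2}}$, the sharp constant turns out to be
$$
\sup_{f}\frac{|Y|}{X}=\frac{\sqrt{(r-2)^{2}+r^{2}\tan^{2}\theta^{*}_{2}}}{2\sqrt{r-1}}=\tan\theta^{*}_{r}
$$
by \eqref{eq: 1 theta r}. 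Hence the inequality holds for every $f$ exactly when $\cot|\psi|\geq\tan\theta^{*}_{r}$, i.e. $|\psi|\leq\theta_{r}$; letting $|\psi|\uparrow\theta_{r}$ and patching the rotated contraction semigroups produces the analytic contraction semigroup on $\bS_{\theta_{r}}$ of part (i).

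For (ii), suppose $(T(t))_{t>0}$ extends to a bounded analytic semigroup on $L^{r}(\gamma_{\infty})$ in $\bS_{\theta}$; then $\sup_{z\in\bS_{\theta'}}\norm{T(z)}{L^{r}\to L^{r}}<\infty$ for each $\theta'<\theta$, so $\oL_{r}$ is sectorial with $\omega(\oL_{r})\leq\pi/2-\theta'$, and letting $\theta'\uparrow\theta$ yields $\omega(\oL_{r})\leq\pi/2-\theta$. It therefore remains to prove $\omega(\oL_{r})\geq\theta^{*}_{r}$, equivalently that $\sigma(\oL_{r})$ contains points of argument arbitrarily close to $\pm\theta^{*}_{r}$. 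This is where the main difficulty lies: for $r\neq2$ the $L^{r}(\gamma_{\infty})$-spectrum of $\oL_{r}$ is strictly larger than its point spectrum and must be located directly; one constructs, by a suitable rescaling in the Mehler-type representation of $T(t)$, families of (approximate) eigenfunctions in $L^{r}(\gamma_{\infty})$ whose eigenvalues fill a region of $\C$ reaching argument $\theta^{*}_{r}$, as in \cite{CFMP, CFMP2}. Granting this spectral lower bound, $\theta^{*}_{r}\leq\omega(\oL_{r})\leq\pi/2-\theta$ forces $\theta\leq\theta_{r}$, and (ii) follows. The step I expect to be most delicate is the pointwise matrix inequality in part (i) — in particular the bookkeeping of the phase-gradient cross term and the normalisation by $Q^{1/2}$, which is precisely what yields the constant $\tan\theta^{*}_{r}$ of \eqref{eq: 1 theta r}; for part (ii) the genuine input is the explicit $L^{r}$-spectral computation just described.
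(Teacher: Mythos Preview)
The paper does not prove this proposition at all: it is quoted verbatim from \cite[Theorem~2 and Remark~6]{CFMP}, with no proof given in the present paper. So there is no ``paper's own proof'' to compare against directly; the relevant comparison is with the cited source.

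Your sketch of part (i) is correct and is precisely the argument of \cite{CFMP2} (and, in slightly different packaging, of \cite{CFMP}): reduce via Lumer--Phillips to the accretivity of $e^{\pm i\psi}\oL_{r}$ on the core $C^{\infty}_{c}(\R^{n})$, integrate by parts using the divergence form \eqref{eq: L Divform}, and reduce to the pointwise matrix inequality $|Y|\leq X\cot|\psi|$ that you wrote down. Your computation of the sharp constant $\tan\theta^{*}_{r}$ via the normal matrix $\tfrac{r-2}{2}I-rC$ is exactly right. In fact the present paper reproduces this very computation later, in the proof of Proposition~\ref{p: convexity of Fr}, where the identities for $H^{B}_{F_{r}}$ and $H^{iB}_{F_{r}}$ are nothing other than your $X$ and $Y$ rewritten in the Hessian language of Notation~\ref{d: H^{M}}; the paper even remarks (after the proof of Theorem~\ref{t: fc principal}) that Proposition~\ref{p: convexity of Fr} is equivalent to \cite[Theorem~2]{CFMP}. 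So your approach and the paper's later use of the same ingredient are fully aligned.

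For part (ii) you correctly reduce to the spectral lower bound $\omega(\oL_{r})\geq\theta^{*}_{r}$ and then defer to \cite{CFMP,CFMP2} for the construction that locates the spectrum. That is exactly what the paper does as well (it simply cites the result). This step is genuinely nontrivial and is not carried out in the present paper either, so your treatment matches.

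One minor comment: the polar decomposition $f=|f|e^{i\alpha}$ is only valid off the zero set of $f$; your $\varepsilon$-regularisation handles the singularity of $|f|^{r-2}$ for $1<r<2$, but for the phase one should also note (as is standard) that $\nabla f=0$ a.e.\ on $\{f=0\}$, so the integrand vanishes there. This is routine and not a gap.
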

In the special case when $\oL_{2}$ is self-adjoint (by Remark~\ref{r: symmetric OU}, it corresponds to the case when $B=B^{*}=Q/2$), the semigroup $(T(t))_{t>0}$ is Markovian and $\theta^{*}_{2}=0$. Therefore, Proposition~\ref{p: OU on Lp} (i) is just a particular case of a more general result that holds for all symmetric contraction semigroups; see \cite[Th\'eor\`eme 3]{Bakry3} for the case of diffusion semigroups, \cite[Corollary 3.2]{lp} for the case of sub-Markovian semigroups, and \cite[Corollary 6.2]{KR} for the general case. See also \cite[Remark 34]{CD-mult} and \cite{Haase2, HKV}.

Note also that when $A=Q=I$, Proposition~\ref{p: OU on Lp} is a consequence of a more precise result by Epperson \cite{E}.

\begin{remark}\label{r: optimal}
Proposition~\ref{p: OU on Lp} gives (see e.g. \cite[Proposition~3.4.4]{Haase} or \cite[Section~4 in Chapter~II]{EN} for details) that for $1<r<\infty$,
\begin{equation}\label{eq: omegar=}
\omega(\oL_{r})=\theta^{*}_{r}. 
\end{equation}
\end{remark}

\subsection*{Bounded $H^\infty$-calculus}
To the best of our knowledge, in the case when $\oL$ is nonsymmetric, the following are the best results concerning  the bounded $H^\infty$-calculus of $\oL_{r}$, $1<r<\infty$, that can be recovered from the existing literature.

First notice that $\oL_{r}$ is a sectorial operator that is not one-to-one, but this can be easily fixed since ${\rm N}(\oL_{r})=\{{\rm constant\ functions }\}$ (see e.g. \cite[Theorem~8.1.16]{lorenzi}) and the projection $\oP_{r}$ onto the null space is given by the formula
\begin{equation}\label{eq: projection}
\oP_{r} f=\int_{\R^{n}}f\wrt\gamma_{\infty},\quad f\in L^{r}(\gamma_{\infty}),\quad 1<r<\infty.
\end{equation}
Therefore, for every $r\in (1,\infty)$, we have that $\overline{{\rm R}}(\oL_{r})=L^{r}_{0}(\gamma_{\infty})=\{f\in L^{r}(\gamma_{\infty}): \int_{\R^{n}} f\wrt\gamma_{\infty}=0\}$, and $\oL_{r}$ is a sectorial one-to-one operator with dense range on $L^{r}_{0}(\gamma_{\infty})$. Hence the functional calculus devised in \cite{Mc, CDMY} applies to our case.

\begin{lemma}[Bounded functional calculus for $\oL_{2}$]
\label{c: fc L2}
We have that 
$$
\omega_{H^{\infty}}(\oL_{2})=\omega(\oL_{2})=\theta^{*}_{2},
$$
and 
$$
\|m(\oL_{2})f\|_{2}\leq \left(2+
2/\sqrt3\right)\|m\|_{\theta}\|f\|_{2},\quad f\in L^{2}_{0}(\gamma_{\infty}),
$$
for every $\theta\in (\theta^{*}_{2},\pi)$ and for all $m\in H^{\infty}(\bS_{\theta})$.
\end{lemma}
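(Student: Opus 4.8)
The plan is to split the equality $\omega_{H^{\infty}}(\oL_{2})=\omega(\oL_{2})=\theta^{*}_{2}$ into its two halves and reduce the whole statement to an upper bound. The identity $\omega(\oL_{2})=\theta^{*}_{2}$ is the case $r=2$ of \eqref{eq: omegar=} (Remark~\ref{r: optimal}), and \eqref{eq: *} already gives $\omega_{H^{\infty}}(\oL_{2})\geq\omega(\oL_{2})=\theta^{*}_{2}$. So it remains to show that for every $\theta\in(\theta^{*}_{2},\pi)$ and every $m\in H^{\infty}(\bS_{\theta})$ one has $m(\oL_{2})\in\cB(L^{2}_{0}(\gamma_{\infty}))$ with $\|m(\oL_{2})f\|_{2}\leq(2+2/\sqrt3)\|m\|_{\theta}\|f\|_{2}$; since $\theta^{*}_{2}<\pi/2$ this forces $\omega_{H^{\infty}}(\oL_{2})\leq\theta^{*}_{2}$, and the three quantities coincide.

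The qualitative half of the upper bound is soft. From the definition of $\oL_{2}$ through the form \eqref{eq: sesquilinear form} and from \eqref{eq: accretive B} one gets $W(\oL_{2})\subseteq W(B)\subseteq\overline{\bS}_{\theta^{*}_{2}}$, so $\oL_{2}$ is an $m$-sectorial operator on the Hilbert space $L^{2}(\gamma_{\infty})$; it is classical (see, e.g., \cite{CDMY}) that such operators satisfy quadratic estimates, and then McIntosh's theorem \cite{Mc} gives $\omega_{H^{\infty}}(\oL_{2})=\omega(\oL_{2})$ outright. This route, however, yields a constant that is neither explicit nor uniform in $\theta$. To obtain the universal constant $2+2/\sqrt3$ I would run the Bellman/heat-flow scheme of this paper in the case $r=2$: by \cite[Theorem~4.6 and Example~4.8]{CDMY} it suffices to prove the bilinear estimates $|\langle m(\oL_{2})f,g\rangle|\leq(2+2/\sqrt3)\|m\|_{\theta}\|f\|_{2}\|g\|_{2}$ for all $\theta>\theta^{*}_{2}$, all $m\in H^{\infty}(\bS_{\theta})$ and all $f,g\in L^{2}_{0}(\gamma_{\infty})$, and Theorem~\ref{t: heat flow} reduces these, in turn, to a Lumer--Phillips-type integral condition relating the Nazarov--Treil function $\cQ$ of Section~\ref{s: Nazarov-Treil}, the operator $\oL_{2}$ and its adjoint.

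Verifying that condition is where the concrete structure enters, and at $r=2$ it simplifies a lot. By \eqref{eq: L Divform}, $\langle\oL_{2}u,v\rangle=\int\langle B\nabla u,\nabla v\rangle\wrt\gamma_{\infty}$, and by \eqref{eq: fund identity} $B_{{\bf s}}=Q/2$; differentiating $t\mapsto\|T(t)f\|_{2}^{2}$ for $f\in L^{2}_{0}(\gamma_{\infty})$ (where $T(t)f\to0$) gives the exact energy identity $\int_{0}^{\infty}\!\int_{\R^{n}}|Q^{1/2}\nabla T(t)f|^{2}\wrt\gamma_{\infty}\wrt t=\|f\|_{2}^{2}$, and likewise for $\oL_{2}^{*}$, which is again an Ornstein--Uhlenbeck operator (with matrix $B^{*}$). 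At $r=2$ the function $\cQ$ is essentially $|\zeta|^{2}+|\eta|^{2}$ plus a cross term, so the heat-flow functional $t\mapsto\int\cQ(u_{t},v_{t})\wrt\gamma_{\infty}$, with $u_{t},v_{t}$ the semigroup evolutions of the $f$- and $g$-data, is governed by these energy identities and by the pointwise bound $|\langle B\xi,\eta\rangle|\leq\|Q^{-1/2}BQ^{-1/2}\|\,|Q^{1/2}\xi|\,|Q^{1/2}\eta|$ with $\|Q^{-1/2}BQ^{-1/2}\|=(2\cos\theta^{*}_{2})^{-1}$ (from $B_{{\bf s}}=Q/2$ and \eqref{eq: theta*2}); the role of the cross term in $\cQ$ is precisely to absorb the operator-dependent factor $1/\cos\theta^{*}_{2}$, while the convexity of $\cQ$ — the $r=2$ instance of Theorem~\ref{t: convexity} — fixes the surviving numerical constant.

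\textbf{Main obstacle.} The difficulty is not the existence of a bounded $H^{\infty}$-calculus, which is essentially automatic on a Hilbert space, but producing a constant that is simultaneously explicit and uniform over all $\theta\in(\theta^{*}_{2},\pi)$ and over all admissible $Q,A$. Every naive route — the Dunford contour integral, or Cauchy--Schwarz against quadratic estimates for $\oL_{2}$ and $\oL_{2}^{*}$ — gives a constant that blows up as $\theta\downarrow\theta^{*}_{2}$ or grows like $1/\cos\theta^{*}_{2}$, and removing this degeneracy is exactly what the Bellman-function argument is for; in the case $r=2$ the function $\cQ$ is explicit enough that the constant can be tracked to $2+2/\sqrt3$. (Once Theorem~\ref{t: fc principal} is available, this lemma is simply its instance $r=2$.)
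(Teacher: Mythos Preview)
Your lower bound $\omega_{H^{\infty}}(\oL_{2})\geq\omega(\oL_{2})=\theta^{*}_{2}$ is exactly the paper's first line. The divergence is entirely in how the explicit constant $2+2/\sqrt3$ is obtained.

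The paper does \emph{not} use the Bellman machinery here at all. It simply observes that, by Proposition~\ref{p: properties of L2}\,(i), $\oL_{2}$ is an operator on a Hilbert space with $W(\oL_{2})\subseteq\overline{\bS}_{\theta^{*}_{2}}$, and then invokes the Crouzeix--Delyon theorem (the numerical range is a $(2+2/\sqrt3)$-spectral set; see \cite{CrDe} and \cite[Corollary~7.1.17]{Haase}). That single citation yields, for every $\theta>\theta^{*}_{2}$ and every $m\in H^{\infty}(\bS_{\theta})$, the bound $\|m(\oL_{2})\|\leq(2+2/\sqrt3)\|m\|_{\theta}$, uniformly in $\theta$ and in $Q,A$. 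The constant $2+2/\sqrt3$ is precisely the Crouzeix--Delyon constant; it has nothing to do with heat flow or with $\cQ$.

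Your proposed route through Theorem~\ref{t: heat flow} and Theorem~\ref{t: convexity} cannot produce this. First, at $p=2$ the function $\cQ$ of \eqref{eq: defi Bellman} is $(1+\delta)|\zeta|^{2}+|\eta|^{2}$ in both regimes (since $q=2$ and $2-q=0$); there is no cross term to ``absorb the operator-dependent factor''. Second, and more seriously, the constants in Theorem~\ref{t: convexity} and hence in Theorem~\ref{t: heat flow}\,(iii) depend on $\theta$ and on $\theta^{*}_{2}$: one has $a_{0}=a_{0}(p,\theta_{2},\theta)$ and then $C_{0}=a_{0}/(1+\tan\theta^{*}_{2})$, so the resulting functional-calculus constant $C_{1}=C_{1}(p,\theta',C_{0})$ is neither $\theta$-independent nor equal to $2+2/\sqrt3$. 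Theorem~\ref{t: fc principal} at $r=2$ has the same defect: its constant $C$ depends on $\theta$ and $\theta^{*}_{2}$. So the claim that ``in the case $r=2$ the function $\cQ$ is explicit enough that the constant can be tracked to $2+2/\sqrt3$'' is unsupported, and the ``main obstacle'' you identify is resolved by a different tool (Crouzeix--Delyon) that you have overlooked.
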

\begin{proof}
By combining inequality \eqref{eq: *} with \eqref{eq: omegar=} for $r=2$, we obtain $\omega_{H^{\infty}}(\oL_{2})\geq\omega(\oL_{2})=\theta^{*}_{2}$. The corollary now follows    by combining Proposition~\ref{p: properties of L2} (i) with a result of Crouzeix and Delyon \cite{CrDe} (see also \cite[Corollary~7.1.17]{Haase}). 
\end{proof}

Since $(T(t))_{t>0}$ is a positivity preserving contraction semigroup on $L^{r}(\gamma_{\infty})$ for any  $1<r<\infty$, one may combine Lemma~\ref{c: fc L2} with the transference technique of Coifman-Weiss \cite{CW,CRW} and with a complex interpolation argument in order to study bounded $H^{\infty}$-calculus of $\oL_{r}$, $1<r<\infty$. 
This was pointed out by Cowling \cite{cowling} in the symmetric case and by 
Duong \cite{Duong} in the nonsymmetric one.

\begin{proposition}
\label{p: fc alla cowling}
Suppose that $1<r<\infty$, $r\neq 2$. Then 
$$
\omega_{H^{\infty}}(\oL_{r})\leq \theta^{*}_{2}+\theta_{2}\mod{1-2/r}.
$$
More precisely, if
$
\theta^{*}_{2}+\theta_{2}\mod{1-2/r}<\theta<\pi,
$
then there exists $C(r,\theta)>0$ such that
$$
\|m(\oL_{r})f\|_{p}\leq C(r,\theta)\|f\|_{\theta}\|f\|_{r}
$$
for all $m\in H^{\infty}(\bS_{\theta})$ and $f\in L^{r}_{0}(\gamma_{\infty})$.
\end{proposition}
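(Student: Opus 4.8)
The plan is to derive Proposition~\ref{p: fc alla cowling} from Lemma~\ref{c: fc L2} by combining the Coifman--Weiss transference principle with complex interpolation, exactly along the lines indicated by Cowling~\cite{cowling} in the symmetric case and by Duong~\cite{Duong} in general. First I would note that since $(T(t))_{t>0}$ is a positivity preserving contraction semigroup, it is in particular subpositive and admits a dilation to a group of isometries on $L^r$, so that the transference technique applies: for the generator $-\oL_r$ one obtains $\omega_{H^\infty}(\oL_r)\leq\pi/2$ for every $r\in(1,\infty)$, with bilinear (or operator-norm) bounds. The key point, however, is that we want a bound involving $\theta^*_2$, not just $\pi/2$, and that requires interpolating between the sharp $L^2$-estimate of Lemma~\ref{c: fc L2} and the universal $L^1$/$L^\infty$ endpoint.

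The heart of the argument is an application of Stein's interpolation theorem to the analytic family of operators $z\mapsto m_z(\oL)$ for a suitable family $m_z$. The standard device (Cowling's trick) is: fix $\theta$ with $\theta^*_2+\theta_2\mod{1-2/r}<\theta<\pi$, and for $m\in H^\infty(\bS_\theta)$ define, for $z$ in a strip, the functions $m_z(\zeta)=m(\zeta)\exp(\beta z^2)\,\zeta^{i\lambda z}$ (or an analogous combination) chosen so that at one edge of the strip, $\Re z$ corresponding to the $L^2$-side, $m_z$ lies in $H^\infty(\bS_{\theta'})$ for some $\theta'>\theta^*_2$ and Lemma~\ref{c: fc L2} gives a uniform $L^2\to L^2$ bound, while at the other edge, corresponding to $L^1$ or $L^\infty$, the transference bound $\omega_{H^\infty}\leq\pi/2$ controls $m_z$ on a sector of angle slightly larger than $\pi/2$. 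The angles must be arranged so that the convex combination $(1-\alpha)\theta^*_2+\alpha\cdot(\pi/2)$ with $\alpha$ determined by $1/r=(1-\alpha)/2+\alpha/1$ (i.e.\ $\alpha=\mod{1-2/r}$ after the usual symmetrisation using $\theta^*_r=\theta^*_{r/(r-1)}$) lands just above $\theta^*_2+\theta_2\mod{1-2/r}=\theta^*_2+(\pi/2-\theta^*_2)\mod{1-2/r}$. One then verifies the growth, measurability and boundary conditions needed for Stein interpolation (the exponential factors $\exp(\beta z^2)$ ensure admissible growth and can be removed at $z=$ the interpolation point at the cost of harmless constants), concludes a bound $\|m(\oL_r)f\|_r\leq C(r,\theta)\|m\|_\theta\|f\|_r$ for $f\in L^r_0(\gamma_\infty)$, and finally takes the infimum over admissible $\theta$ to get $\omega_{H^\infty}(\oL_r)\leq\theta^*_2+\theta_2\mod{1-2/r}$.

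In carrying this out I would work with the bilinear formulation: writing, for $m\in H^\infty(\bS_\theta)$ with $\theta>\omega(\oL_r)$, the duality pairing $\sk{m(\oL_r)f}{g}$ against $g\in L^{r'}_0(\gamma_\infty)$, one sets up the analytic family $F(z)=\sk{m_z(\oL)f}{g}$, establishes the three-lines estimate, and reads off the bound at the appropriate $z_0$. The transference endpoint should be invoked in the precise form of \cite[Theorem~2]{Duong}, which gives, for each $\delta>0$, a constant $C_\delta$ with $\|\mu(\oL_s)\|_{L^s\to L^s}\leq C_\delta\|\mu\|_{\pi/2+\delta}$ for all $s\in(1,\infty)$ and $\mu\in H^\infty(\bS_{\pi/2+\delta})$; one uses this at an auxiliary exponent $s$ close to $1$ (or $\infty$) and then interpolates with $s=2$. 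A small technical wrinkle is that the functional calculus lives on $L^r_0(\gamma_\infty)$ rather than $L^r(\gamma_\infty)$, but the projection $\oP_r$ of \eqref{eq: projection} is bounded on every $L^r$ and commutes with everything in sight, so this is a bookkeeping matter.

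The main obstacle is purely in the bookkeeping of angles and in checking the hypotheses of Stein's interpolation theorem for the chosen analytic family --- in particular, making sure the auxiliary multipliers $\zeta^{i\lambda z}$ are genuinely bounded holomorphic on the required sector (they are: $|\zeta^{i\lambda z}|=\exp(-\lambda\,\mathrm{Im}(z)\arg\zeta)$ is bounded on $\bS_{\theta'}$ with a constant depending on $\theta'$ and $\mathrm{Im}\,z$, which is why the Gaussian factor $\exp(\beta z^2)$ is needed to absorb that growth in $z$), and that the resulting combined angle is exactly $\theta^*_2+\theta_2\mod{1-2/r}$ and not something slightly worse. No genuinely new idea beyond \cite{cowling,Duong} is required; the content is entirely in verifying that Lemma~\ref{c: fc L2} is a strong enough $L^2$ input to feed into this machine, which it is because it has the optimal angle $\theta^*_2$.
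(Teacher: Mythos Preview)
Your proposal is correct in spirit and would work, but it takes a different route from the paper's own proof. You set up Stein interpolation for an analytic family of multipliers $m_z(\zeta)=m(\zeta)\exp(\beta z^2)\zeta^{i\lambda z}$, which is indeed Cowling's original device in \cite{cowling}. The paper instead avoids the analytic-family machinery entirely: it first extracts bounds on \emph{imaginary powers} from each endpoint --- transference gives $\|\oL_s^{iu}\|_{s\to s}\leq C(s,\theta')e^{\theta'|u|}$ for every $\theta'>\pi/2$ and every $s\in(1,\infty)$, while Lemma~\ref{c: fc L2} gives $\|\oL_2^{iu}\|_{2\to 2}\leq C\,e^{\theta''|u|}$ for every $\theta''>\theta_2^*$ --- then applies ordinary complex interpolation to the single family $u\mapsto\oL^{iu}$ to obtain $\|\oL_r^{iu}\|_{r\to r}\leq C(r,\theta)e^{\theta|u|}$ for $\theta>\theta_2^*+\theta_2|1-2/r|$, and finally invokes \cite[Theorem~5.4]{CDMY} to pass from imaginary-power bounds back to a full $H^\infty(\bS_\theta)$-calculus.

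The trade-off: your approach is self-contained once one has Stein interpolation, but requires care with the admissibility of the analytic family (the Gaussian factors, the sector bookkeeping for $\zeta^{i\lambda z}$, etc.). The paper's approach outsources that care to \cite[Theorem~5.4]{CDMY}, reducing the interpolation step to a one-line Riesz--Thorin argument for the fixed operators $\oL^{iu}$; the angle arithmetic is then transparent, since the exponential growth rates interpolate linearly. Both land on exactly the same angle.
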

\begin{proof}
Let $1<s<\infty$. Since $(T(t))_{t>0}$ is a strongly continuous, contractive and positivity preserving semigroup on $L^{s}_{0}(\gamma_{\infty})$, by Coifman-Weiss transference technique  $\oL_{s}$ has a bounded $H^{\infty}(\bS_{\theta'})$-calculus, for every $\theta'>\pi/2$ \cite{CW, CRW, cowling, Duong}. In particular, for every $\theta'>\pi/2$,
$$
\norm{\oL^{iu}_{s}f}{s}\leq C(s,\theta')e^{\theta'|u|}\norm{f}{s},\quad f\in L^{s}_{0}(\gamma_{\infty}),\quad u\in\R.
$$
Lemma~\ref{c: fc L2} implies that, for every $\theta^{''}>\theta^{*}_{2}$,
$$
\norm{\oL^{iu}_{2}f}{2}\leq \left(2+
2/\sqrt3\right)e^{\theta^{''}|u|}\norm{f}{2},\quad f\in L^{2}_{0}(\gamma_{\infty}),\quad u\in\R.
$$
By interpolating the two estimates above, for every $\theta>\theta^{*}_{2}+\theta_{2}\mod{1-2/r}$,
$$
\norm{\oL^{iu}_{r}f}{r}\leq C(r,\theta)e^{\theta|u|}\norm{f}{p},\quad f\in L^{r}_{0}(\gamma_{\infty}),\quad u\in\R.
$$
The proposition now follows from \cite[Theorem 5.4]{CDMY}.
\end{proof}

\begin{remark}
If one is just interested in proving that $\omega_{H^{\infty}}(\oL_{r})<\pi/2$, instead of the argument above one can use \cite[Corollary 5.2 and Theorem 5.3]{KW} (see also \cite[Lemma~8.4]{MV}).
\end{remark}

\subsection*{Main theorem in the finite dimensional case}
One of the principal aims of this paper is to improve Proposition~\ref{p: fc alla cowling} by obtaining the sharp angle in the bounded holomorphic functional calculus for the finite dimensional Ornstein-Uhlenbeck operator $\oL_{r}$, $1<r<\infty$. This is our result:

\begin{theorem}
\label{t: fc principal}
For each $r\in (1,\infty)$ let $\theta^{*}_{r}$ be the angle defined in \eqref{eq: 1 theta r}. Then,
$$
\omega_{H^{\infty}}(\oL_{r})=\omega(\oL_{r})=\theta^{*}_{r}.
$$ 
Moreover, for every $\theta>\theta^{*}_{r}$ there exists $C>0$ which depends only on $r$, $\theta$ and $\theta^{*}_{2}$, such that
$$
\|m(\oL_{r})f\|_{r}\leq C\|m\|_{\theta}\|f\|_{r},\quad f\in L^{r}_{0}(\gamma_{\infty}),
$$
for all $m\in H^{\infty}(\bS_{\theta})$.
\end{theorem}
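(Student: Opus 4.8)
The plan is to verify the bilinear estimate that, via the Cowling–Doust–McIntosh–Yagi machinery \cite[Theorem 4.6 and Example 4.8]{CDMY}, yields the boundedness of the $H^\infty$-calculus with the asserted angle. Concretely, fixing $r\in(1,\infty)$, $r\ne2$ (the case $r=2$ being Lemma~\ref{c: fc L2}), and $\theta\in(\theta^*_r,\pi/2)$, it suffices to establish, for all $\theta'<\theta^*_r{}^{*}=\pi/2-\theta^*_r$ and all suitable $f\in L^r_0(\gamma_\infty)$, $g\in L^{r'}_0(\gamma_\infty)$, a bilinear bound of the form $\int_0^\infty|\langle \oL_r e^{-te^{\pm i\theta'}\oL_r}f,\,g\rangle|\,\frac{\wrt t}{t}\lesssim \|f\|_r\|g\|_{r'}$ (this is the estimate labelled \eqref{eq: fc bilinear} in the paper). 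The lower bound $\omega_{H^\infty}(\oL_r)\ge\omega(\oL_r)=\theta^*_r$ is immediate from \eqref{eq: *} and Remark~\ref{r: optimal}, so the whole content is the matching upper bound with a constant depending only on $r,\theta,\theta^*_2$.

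The mechanism for the bilinear estimate is the heat-flow/Bellman-function argument. One introduces the Nazarov–Treil Bellman function $\cQ$ adapted to the exponents $(r,r')$ and the angle, and considers, along the Ornstein–Uhlenbeck flow $u(t,x)=T(t)f(x)$ and $v(t,x)=T(t)g(x)$ (with complex time $te^{\pm i\theta'}$), the quantity $t\mapsto\int_{\R^n}\cQ(u(t,x),v(t,x))\wrt\gamma_\infty(x)$. Differentiating in $t$ and using the divergence-form representation $\oL f=\nabla^*_\infty(B\nabla f)$ from \eqref{eq: L Divform} together with integration by parts in $\gamma_\infty$, one writes the derivative as an integral over $\R^n$ of a quadratic form in $(\nabla u,\nabla v)$ tested against the complex Hessian of $\cQ$ composed with the accretive matrix $B$ (this is precisely the integral condition \eqref{eq: heat flow infinitesimal} and Theorem~\ref{t: heat flow}, which the paper identifies as a Lumer–Phillips-type condition). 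Integrating the resulting differential inequality from $0$ to $\infty$ and controlling boundary terms by the size of the Bellman function (which is comparable to $|\zeta|^r+|\eta|^{r'}$) produces the desired bilinear bound; the passage from real time to the complex-time sector $\bS_{\theta^*_r{}^{*}}$ uses that, by Proposition~\ref{p: OU on Lp}, $\theta^*_r{}^{*}$ is exactly the contractivity angle of the semigroup on $L^r(\gamma_\infty)$, so the complexified flow stays well-behaved.

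The crux — and the step I expect to be the main obstacle — is verifying the pointwise infinitesimal condition \eqref{eq: heat flow infinitesimal}, i.e. that for every $\xi,\zeta\in\C^n$ and every relevant point the expression $\Re\big\langle (\He\,\cQ)(\,\cdot\,)\,\big(B\xi,\overline{B}\zeta\big),(\xi,\zeta)\big\rangle$ has the correct sign in the sector of angle $\theta^*_r{}^{*}$. This is where the nonsymmetry of $\oL$ enters: the matrix $B=Q_\infty A^*$ is only strictly accretive (by \eqref{eq: accretive B}, $B_{\bf s}=Q/2$), with numerical range in $\overline{\bS}_{\theta^*_2}$, and one must combine the known convexity/Bellman inequalities for $\cQ$ (the Nazarov–Treil estimates) with a new, sharper convexity property of $\cQ$ — this is Theorem~\ref{t: convexity} — in order to absorb the antisymmetric part $B_{\bf a}$ and recover exactly the angle $\theta^*_r$ predicted by formula \eqref{eq: 1 theta r} (equivalently \eqref{eq: alternative}, $\sin\vartheta_r=\sin\phi_r\sin\vartheta_2$). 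The computation of the analyticity angle of the Ornstein–Uhlenbeck semigroup from \cite{CFMP,CFMP2}, invoked via Proposition~\ref{p: convexity of Fr}, is what makes the angle bookkeeping come out sharp. Once Theorem~\ref{t: heat flow} and Theorem~\ref{t: convexity} are in hand, assembling them into the bilinear estimate and then into the functional-calculus bound via \cite[Theorem 4.6]{CDMY} is essentially routine, and the dimension-free nature of all the Bellman-function inequalities gives the constant $C=C(r,\theta,\theta^*_2)$ with no dependence on $n$.
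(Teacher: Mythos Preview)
Your proposal is correct and follows essentially the same route as the paper: the lower bound from \eqref{eq: *} and Remark~\ref{r: optimal}, the reduction via Theorem~\ref{t: heat flow} to the integral condition \eqref{eq: heat flow infinitesimal}, the verification of that condition through integration by parts in the divergence form \eqref{eq: L Divform} combined with the Bellman-function convexity Theorem~\ref{t: convexity} (whose proof in turn rests on Proposition~\ref{p: convexity of Fr} and the \cite{CFMP,CFMP2} computation), and finally the passage to the functional-calculus bound via \cite[Theorem~4.6]{CDMY}. A couple of minor points you glossed over but that the paper handles explicitly: the heat flow pairs $T(te^{\pm i\theta})f$ with the \emph{adjoint} evolution $T^{*}(te^{\mp i\theta})g$ (not $T$ on both sides), the argument is first carried out for $p\geq 2$ and then transferred to $q=p/(p-1)$ by running the same proof with $B^{*}$ in place of $B$, and the pointwise Hessian inequality is applied to the mollified Bellman function $\cQ*\psi_{\epsilon}$ (Corollary~\ref{c: convexity}) before passing to the limit, since $\cQ$ is only $C^{1}$.
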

The proof of Theorem~\ref{t: fc principal} is postponed to Section~\ref{s: proof of mult thm}.  Note that a simple calculation based on \eqref{eq: 1 theta r} or \eqref{eq: alternative} shows that
$\theta^{*}_{r}<\theta^{*}_{2}+\theta_{2}\mod{1-2/r}$ for $r\in(1,\infty)\backslash\{2\}$, i.e., Theorem~\ref{t: fc principal} indeed improves Proposition \ref{p: fc alla cowling}.

\begin{remark}
When  $\oL_2$ is symmetric (i.e. when $\theta^{*}_{2}=0$) Theorem~\ref{t: fc principal} is a particular case of a universal multiplier result previously proved by the two authors of the present paper \cite[Theorem 1]{CD-mult}.
In the special case when $Q=A=I$, Theorem~\ref{t: fc principal} is a consequence of \cite{fcou, MMS}, apart from the fact that the estimates of the norms in \cite{fcou, MMS} depend on the dimension. 
\end{remark}

\begin{remark}
\label{r: analiticity}
One could study bounded $H^{\infty}$-calculus for degenerate Ornstein-Uhlenbeck operators; i.e. Ornstein-Uhlenbeck operators whose diffusion part is associated with a nonnegative and symmetric quadratic form $Q$ with ${\rm N}(Q)\neq \{0\}$.
We will show in Section~\ref{I: s: OU} and Section~\ref{I: s: proof of mult thm} that an analogue of Theorem~\ref{t: fc principal} holds in the degenerate case under the assumption that the associated Ornstein-Uhlenbeck semigroup is analytic on $L^{2}(\gamma_{\infty})$ in some sector of positive angle; see Theorem~\ref{I: t: fc principal}.
Note however that degenerate Ornstein-Uhlenbeck semigroups may fail to be analytic \cite{Fu,GvN}; if this is the case then clearly $\omega_{H^{\infty}}(\oL_{r})\geq\omega(\oL_{r})\geq\pi/2$.
Note also that when ${\rm N}(Q_{\infty})=\{0\}$, by a result of B. Goldys \cite[Corollary 2.6]{Goldys} the nondegeneracy condition ${\rm N}(Q)=\{0\}$ is necessary for the analyticity of the associated finite dimensional Ornstein-Uhlenbeck semigroup.
\end{remark}

\section{Bounded $H^{\infty}$-calculus via Nazarov-Treil Bellman function}
\label{s: Nazarov-Treil}
Unless specified otherwise, we assume everywhere in this section that $p\geqslant 2$ and $q=p/(p-1)$. Fix $\delta>0$. The Bellman function we use is the function $\cQ=\cQ_{p,\delta}:~\R^{2}\times\R^{2}\longrightarrow\R_+$ defined by 
\begin{equation}\label{eq: defi Bellman}
\cQ(\zeta,\eta)= 
|\zeta|^p+|\eta|^{q}+\delta
\left\{
\aligned
& |\zeta|^2|\eta|^{2-q} & ; & \ \ |\zeta|^p\leqslant |\eta|^q\\
& \frac{2}{p}\ |\zeta|^{p}+\left(\frac{2}{q}-1\right)|\eta|^{q}
& ; &\ \ |\zeta|^p\geqslant |\eta|^q\,.
\endaligned\right.
\end{equation}

The origins of $\cQ$ lie in the paper of F. Nazarov and S. Treil \cite{NT}.
A modification of their function has been later applied by A. Volberg and the second author in \cite{DV,DV-Sch}. Here we use a simplified variant which comprises only two variables. It was introduced in \cite{DV-Kato} and used by the present authors in \cite{CD, CD-mult}.

The construction of the original Nazarov--Treil function in \cite{NT} was one of the earliest examples of the so-called Bellman function technique, which was introduced in harmonic analysis shortly beforehand by Nazarov, Treil and Volberg \cite{NTV}.  
The name ``Bellman function'' stems from the stochastic optimal control, see \cite{NTV1} for details. The same paper \cite{NTV1} explains the connection between the Nazarov--Treil--Volberg approach and the earlier work of Burkholder on martingale inequalities, see \cite{Bu1} and also \cite{Bu2,Bu4}. For an in-depth treatise on recent advances in martingale inequalities the reader is referred to \cite{Os}.
If interested in the genesis of Bellman functions and the overview of the method, the reader is also referred to \cite{V,NT,W}.
The method has seen a whole series of applications, yet until recently (see \cite{CD, CD-mult}) mostly in Euclidean harmonic analysis.

In the course of the last few years, the Nazarov--Treil function considered here was found to possess nontrivial properties \cite{DV-Sch,CD-mult, CD} that reach much beyond the need for which it had been originally constructed in \cite{NT}.
In the present paper we continue the exploration of the convexity properties of $\cQ$ (see Theorem~\ref{t: convexity} and Remark~\ref{r: gen convex}).

\medskip
It is a direct consequence of the definition above that the function $\cQ$ belongs to $C^1(\R^4)$, and is of order $C^2$ everywhere {\it except} on the set
$$
\Upsilon_0=\mn{(\zeta,\eta)\in  \R^{2}\times\R^{2}}{(\eta=0)\vee (|\zeta|^p=|\eta|^q)}\,.
$$
For $\zeta,\eta\in\R^{2}$ write $\zeta=(\zeta_1,\zeta_2)$, $\eta=(\eta_1,\eta_2)$ and define
$$
\partial_\zeta=\frac{1}{2}\left(\partial_{\zeta_1}-i\partial_{\zeta_2}\right)\quad
{\rm and}\quad \partial_\eta=\frac{1}{2}\left(\partial_{\eta_1}-i\partial_{\eta_2}\right).
$$
The following estimates are also a straightforward consequence of the definition of $\cQ$.

\begin{proposition}\label{p: estimates Bellman}
For every $(\zeta,\eta)\in\R^{2}\times\R^{2}$ we have
$$
0\leqslant \cQ(\zeta,\eta)\leqslant (1+\delta)\left(|\zeta|^p+|\eta|^q\right),
$$
and
\begin{align*}
&2|(\partial_{\zeta}\cQ)(\zeta,\eta)|\leqslant (p+2\delta)\max\{|\zeta|^{p-1},|\eta|\},\\
&2|(\partial_{\eta}\cQ)(\zeta,\eta)|\leqslant (q+(2-q)\delta)|\eta|^{q-1}.
\end{align*}
\end{proposition}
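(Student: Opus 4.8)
The plan is to verify the three inequalities by a direct case analysis according to the two branches in the definition \eqref{eq: defi Bellman} of $\cQ$, treating the region $\{|\zeta|^p\leqslant|\eta|^q\}$ and the region $\{|\zeta|^p\geqslant|\eta|^q\}$ separately and checking agreement on the overlap $\{|\zeta|^p=|\eta|^q\}$. For the zeroth-order bound, in the first branch one has $\cQ=|\zeta|^p+|\eta|^q+\delta|\zeta|^2|\eta|^{2-q}$, and the inequality $|\zeta|^2|\eta|^{2-q}\leqslant |\eta|^q$ (equivalently $|\zeta|^2\leqslant|\eta|^{2q-2}=(|\eta|^q)^{2/q}$, which follows from $|\zeta|^p\leqslant|\eta|^q$ by raising to the power $2/p$ and using $q=p/(p-1)$) gives $\cQ\leqslant |\zeta|^p+(1+\delta)|\eta|^q\leqslant(1+\delta)(|\zeta|^p+|\eta|^q)$; in the second branch $\cQ=|\zeta|^p+|\eta|^q+\delta(\tfrac2p|\zeta|^p+(\tfrac2q-1)|\eta|^q)$ and since $\tfrac2p\leqslant1$ and $\tfrac2q-1\leqslant1$ (both because $p\geqslant2$) one again gets $\cQ\leqslant(1+\delta)(|\zeta|^p+|\eta|^q)$. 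Nonnegativity is immediate since every term is nonnegative.

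For the gradient bounds I would use the Wirtinger derivatives: for a radial function $u\mapsto\Phi(|u|)$ on $\R^2$ one has $2\partial_u(\Phi(|u|))=\Phi'(|u|)\,\overline{u}/|u|$, so $2|\partial_u(\Phi(|u|))|=|\Phi'(|u|)|$, and for a product of radial functions of $\zeta$ and $\eta$ one differentiates in the obvious way. Thus in the first branch $2|\partial_\zeta\cQ|\leqslant p|\zeta|^{p-1}+2\delta|\zeta|\,|\eta|^{2-q}$, and the constraint $|\zeta|^p\leqslant|\eta|^q$ yields $|\zeta|\,|\eta|^{2-q}\leqslant|\eta|^{q-1}\cdot|\eta|^{1-q}|\zeta|\,|\eta|^{2-q}$... more directly: $|\zeta|\leqslant|\eta|^{q/p}=|\eta|^{q-1}$, hence $|\zeta|\,|\eta|^{2-q}\leqslant|\eta|^{q-1}|\eta|^{2-q}=|\eta|$, giving $2|\partial_\zeta\cQ|\leqslant p|\zeta|^{p-1}+2\delta|\eta|\leqslant(p+2\delta)\max\{|\zeta|^{p-1},|\eta|\}$. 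In the second branch $2|\partial_\zeta\cQ|\leqslant p|\zeta|^{p-1}+\delta\cdot\tfrac2p\cdot p|\zeta|^{p-1}=(p+2\delta)|\zeta|^{p-1}\leqslant(p+2\delta)\max\{|\zeta|^{p-1},|\eta|\}$. For $\partial_\eta$: in the first branch $2|\partial_\eta\cQ|\leqslant q|\eta|^{q-1}+\delta|\zeta|^2(2-q)|\eta|^{1-q}$ and using $|\zeta|^2\leqslant|\eta|^{2q-2}$ as above, $|\zeta|^2|\eta|^{1-q}\leqslant|\eta|^{q-1}$, so $2|\partial_\eta\cQ|\leqslant(q+(2-q)\delta)|\eta|^{q-1}$; in the second branch $2|\partial_\eta\cQ|\leqslant q|\eta|^{q-1}+\delta(\tfrac2q-1)q|\eta|^{q-1}=(q+(2-q)\delta)|\eta|^{q-1}$, which matches.

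The only genuinely delicate point is the regularity needed to even speak of $\partial_\zeta\cQ,\partial_\eta\cQ$ pointwise and the behaviour at $\eta=0$ and on the seam $\{|\zeta|^p=|\eta|^q\}$; but this is precisely the content of the sentence preceding the proposition ($\cQ\in C^1(\R^4)$), so the derivatives exist everywhere and are continuous, and the estimates, being proved on each closed region, extend by continuity to the common boundary. I expect no real obstacle here: every step is an elementary manipulation of the constraints $|\zeta|^p\lessgtr|\eta|^q$ together with the identity $q=p/(p-1)$ and the inequalities $\tfrac2p\leqslant1\leqslant\tfrac2q$... (note $\tfrac2q=\tfrac{2(p-1)}{p}\geqslant1$ iff $p\geqslant2$) and $\tfrac2q-1\leqslant1$. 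The case $\eta=0$ (where the first branch applies, with value $|\zeta|^p$) and $\zeta=0$ are handled trivially by inspection. I would present the computation compactly, doing $\partial_\zeta$ and $\partial_\eta$ in parallel for the two branches.
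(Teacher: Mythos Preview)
Your argument is correct and is exactly the direct case-by-case verification the paper has in mind; the paper itself gives no proof, merely remarking that the estimates are ``a straightforward consequence of the definition of $\cQ$''. One small slip: in your final paragraph you write that at $\eta=0$ ``the first branch applies, with value $|\zeta|^p$'', but in fact $\eta=0$ with $\zeta\ne0$ forces $|\zeta|^p>|\eta|^q$, so it is the \emph{second} branch that applies (giving $(1+2\delta/p)|\zeta|^p$); this does not affect the argument, since your second-branch computations already cover that case correctly.
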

\begin{remark}
It is sometime useful to think of $\cQ$ as a function defined on $\C\times\C$, by using the canonical identification of $\R^{2}$ with $\C$. This fact will be often implicitly used in this paper.
\end{remark}

We now state an abstract result, which is an extension to the nonsymmetric case of an analogous technique used by present authors for proving a universal multiplier theorem for generators of symmetric contraction semigroups \cite[Section 4, Remark 34]{CD-mult}.  

\begin{theorem}\label{t: heat flow}
Let $(\Omega,\mu)$ be a $\sigma$-finite measure space, and let $\oA$ be a closed, densely defined and one-to-one operator on $L^{p}(\mu)$. Let $\delta\in (0,1)$ and let $\cQ$ be the Bellman function associated with $\delta$. Suppose that there exist $\theta\in [0,\pi/2)$ and $C_{0}>0$ such that
\begin{equation}
\label{eq: heat flow infinitesimal}
C_{0}\mod{\int_{\Omega}\oA f\cdot \overline{g}\wrt\mu}
\leq 
2\Re\int_\Omega \left(e^{\pm i\theta}(\partial_\zeta \cQ)(f,g)\oA f+ e^{\mp i\theta}(\partial_\eta \cQ)(f,g)\oA^{*} g\right)\wrt\mu,
\end{equation}
for all $f\in {\rm D}(\oA)$ and every $g\in{\rm D}(\oA^{*})$. Then,
\begin{itemize}
\item[{\rm(i)}] $-\oA$ is the generator of an analytic contraction semigroup on $L^{p}(\mu)$ in the sector $\overline{\bS}_{\theta}$. 
\item[{\rm(ii)}] $-\oA^{*}$ is the generator of an analytic contraction semigroup on $L^{q}(\mu)$ in the sector $\overline{\bS}_{\theta}$. 
\item[{\rm(iii)}] For every $\theta^{'}>\theta^*$, the operator $\oA$ has bounded $H^{\infty}(\bS_{\theta'})$-calculus on $L^{p}(\mu)$. That is, there exists $C_{1}=C_{1}(p,\theta',C_{0})>0$ such that for all $m\in H^{\infty}(\bS_{\theta'})$,
$$
\|m(\oA)f\|_{p}\leq C_{1}\|m\|_{\theta'}\|f\|_{p},\quad f\in L^{p}(\mu).
$$
\end{itemize}
\end{theorem}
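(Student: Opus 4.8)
\textbf{Proof plan for Theorem~\ref{t: heat flow}.}
The plan is to derive all three conclusions from the single infinitesimal inequality \eqref{eq: heat flow infinitesimal}, following the Bellman-function/heat-flow scheme of \cite{CD-mult} but tracking the rotation by $e^{\pm i\theta}$ throughout. The central object is, for $f,g$ in suitable dense subspaces, the function
$$
E(t)=\int_\Omega \cQ\bigl(e^{-e^{i\theta}t\oA}f,\,e^{-e^{i\theta}t\oA^{*}}g\bigr)\wrt\mu,\qquad t>0,
$$
together with its sibling obtained by replacing $\theta$ by $-\theta$. First I would record the elementary consequences of \eqref{eq: heat flow infinitesimal}: taking $g=0$ (or rather using the one-sided form of the inequality with only the $\partial_\zeta$ term, which is how the Lumer--Phillips condition is encoded) shows that $\Re\, e^{i\theta}\int (\partial_\zeta|\zeta|^p)(f)\,\oA f \geq 0$ and similarly for $\oA^{*}$, which by the Lumer--Phillips theorem (e.g. \cite[Theorem~1.54]{O}) yields (i) and (ii) once one knows the numerical-range-type bound; more precisely one differentiates $\|e^{-e^{i\theta}t\oA}f\|_p^p$ and uses that $2\Re(p^{-1}\partial_\zeta(|\zeta|^p)\cdot e^{i\theta}\oA f)$ integrates to something $\leq 0$, which is exactly the $\delta\to 0$, $C_0\to 0$ degenerate case of \eqref{eq: heat flow infinitesimal}. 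So (i) and (ii) are essentially the ``diagonal'' part of the hypothesis.

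The substance is (iii). Here the strategy is: (a) show $E$ is differentiable in $t$ with
$$
E'(t)=-2\Re\int_\Omega\Bigl(e^{i\theta}(\partial_\zeta\cQ)(f_t,g_t)\,\oA f_t+e^{i\theta}(\partial_\eta\cQ)(f_t,g_t)\,\oA^{*}g_t\Bigr)\wrt\mu
$$
where $f_t=e^{-e^{i\theta}t\oA}f$, $g_t=e^{-e^{i\theta}t\oA^{*}}g$ — note both semigroups are run with the \emph{same} complex time $e^{i\theta}t$, which is why the $\partial_\eta$ term in \eqref{eq: heat flow infinitesimal} carries $e^{\mp i\theta}$ paired against $\oA^{*}$ whose semigroup here contributes the conjugate rotation, so the two signs in \eqref{eq: heat flow infinitesimal} correspond to the two choices $\pm\theta$ and the hypothesis is exactly what makes $E'(t)\leq -C_0|\int \oA f_t\cdot\overline{g_t}\wrt\mu|$; (b) integrate from $0$ to $\infty$, using Proposition~\ref{p: estimates Bellman} to bound $E(0)\leq(1+\delta)(\|f\|_p^p+\|g\|_q^q)$ and $E(\infty)\geq 0$, to obtain
$$
\int_0^\infty\Bigl|\int_\Omega \oA e^{-e^{i\theta}t\oA}f\cdot\overline{e^{-e^{i\theta}t\oA^{*}}g}\wrt\mu\Bigr|\wrt t\leq \frac{1+\delta}{C_0}\bigl(\|f\|_p^p+\|g\|_q^q\bigr);
$$
(c) homogenize in $f$ and $g$ (replace $f$ by $\lambda f$, $g$ by $\mu g$ and optimize, or use that the bilinear quantity is $1$-homogeneous in each of $f,g$ after the $p,q$ normalization) to upgrade this to the bilinear square-function-free estimate
$$
\int_0^\infty\Bigl|\int_\Omega t\oA e^{-e^{i\theta}t\oA}f\cdot\overline{e^{-e^{i\theta}t\oA^{*}}g}\,\frac{\wrt t}{t}\wrt\mu\Bigr|\leq C\|f\|_p\|g\|_q;
$$
(d) invoke \cite[Theorem~4.6 and Example~4.8]{CDMY} (the equivalence, for operators on $L^p$, of bounded $H^\infty$-calculus in a sector with such bilinear/quadratic estimates along rays) to conclude that $\oA$ has bounded $H^\infty(\bS_{\theta'})$-calculus for every $\theta'>\theta^{*}=\pi/2-\theta$, with constant depending only on $p$, $\theta'$ and $C_0$.

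\textbf{The main obstacles.} The genuinely delicate points are in step (a): justifying the differentiation under the integral sign and the chain rule for $\cQ$, which is only $C^1$ (of class $C^2$ off the exceptional set $\Upsilon_0$). One must argue that the trajectory $(f_t,g_t)$ spends negligible time on $\Upsilon_0$, or regularize $\cQ$, or work with difference quotients and pass to the limit using the $C^1$ bounds of Proposition~\ref{p: estimates Bellman} for uniform integrability — this is exactly the kind of technical lemma carried out in \cite{CD-mult} and I would follow that route, first proving the identity for $f,g$ in a dense class (e.g. finite linear combinations of eigenfunctions, or functions in $\mathrm{D}(\oA^k)\cap\mathrm{D}((\oA^*)^k)$ with good decay) where all the integrability is automatic, then extending by density using that both semigroups are analytic contractions (conclusions (i),(ii)) so $t\oA e^{-e^{i\theta}t\oA}$ is uniformly bounded. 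A secondary subtlety is that \cite[Example~4.8]{CDMY} requires the estimate only on two rays $\arg z=\pm\theta$ at the boundary of the \emph{complementary} sector, and one must check the bookkeeping that ``bilinear estimate with the semigroup rotated by $e^{\pm i\theta}$'' translates to ``quadratic estimate for $\psi(z\oA)$ along $\arg z = \mp\theta$'' and hence to bounded calculus in $\bS_{\theta'}$, $\theta'>\pi/2-\theta=\theta^{*}$; this is where the convention $\rho^{*}=\pi/2-\rho$ is doing its work and the identification is standard but must be stated carefully. Everything else — the homogenization in step (c), the positivity $\cQ\geq 0$ and the upper bound from Proposition~\ref{p: estimates Bellman} — is routine.
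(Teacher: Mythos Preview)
Your overall strategy is exactly the paper's: extract (i) and (ii) from \eqref{eq: heat flow infinitesimal} by setting $g=0$, respectively $f=0$, and Lumer--Phillips; then run the Bellman heat flow, integrate, homogenize, and feed the bilinear estimate into \cite[Theorem~4.6 and Example~4.8]{CDMY}. Two points deserve correction.

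\medskip
\textbf{The complex time on the adjoint semigroup is wrong.} You define $g_t=e^{-e^{i\theta}t\oA^{*}}g$, i.e.\ you run both semigroups with the \emph{same} complex time $te^{i\theta}$. With that choice,
\[
-E'(t)=2\Re\int_\Omega\Bigl(e^{i\theta}(\partial_\zeta\cQ)(f_t,g_t)\,\oA f_t+e^{i\theta}(\partial_\eta\cQ)(f_t,g_t)\,\oA^{*}g_t\Bigr)\wrt\mu,
\]
which carries $e^{i\theta}$ on \emph{both} terms and therefore does \emph{not} match \eqref{eq: heat flow infinitesimal}, where the $\partial_\eta$ term carries $e^{\mp i\theta}$. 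The paper takes $f_t=T(te^{\pm i\theta})u$ and $g_t=T^{*}(te^{\mp i\theta})v$, i.e.\ \emph{conjugate} complex times; then $\partial_t g_t=-e^{\mp i\theta}\oA^{*}g_t$ and $-\cE'(t)$ is precisely the right-hand side of \eqref{eq: heat flow infinitesimal} evaluated at $(f_t,g_t)$. This is also the form of the bilinear estimate required by \cite{CDMY} (their pairing is $\int_\Omega \oA T(te^{\pm i\theta})u\cdot\overline{T^{*}(te^{\mp i\theta})v}\wrt\mu$). Your parenthetical remark about ``the conjugate rotation'' shows you sense the issue, but as written the setup and the displayed $E'(t)$ are inconsistent with the hypothesis and the argument would not close.

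\medskip
\textbf{The regularity worry is misplaced here.} The flow step uses only the chain rule for $\cQ\in C^{1}(\R^{4})$ and the first-derivative bounds of Proposition~\ref{p: estimates Bellman} to justify differentiation under the integral; no second derivatives of $\cQ$ and hence no issue with $\Upsilon_{0}$ arise in the proof of Theorem~\ref{t: heat flow}. The Hessian of $\cQ$ (and the mollification $\cQ*\psi_\epsilon$) enters only later, in the \emph{verification} of \eqref{eq: heat flow infinitesimal} for the Ornstein--Uhlenbeck operator via integration by parts. So your elaborate plan to regularize, restrict to eigenfunctions, or control the time spent on $\Upsilon_{0}$ is unnecessary at this stage; the paper disposes of step~(a) in one line by invoking Proposition~\ref{p: estimates Bellman}.
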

\begin{proof}
It follows from \eqref{eq: defi Bellman} that $Q(\zeta,0)=|\zeta|^p$ and $Q(0,\eta)=|\eta|^q$, up to some positive multiplicative constants. Thus, by taking separately $g=0$ and $f=0$ in \eqref{eq: heat flow infinitesimal}, we get
$$
\Re\Big(e^{\pm i\theta}\int_{\Omega} 
\bar f|f|^{p-2} \oA f\,\wrt\mu\Big) 
\geq0
\hskip 30pt \text{and} \hskip 30pt 
\Re\Big(e^{\pm i\theta}\int_{\Omega} 
\bar g|g|^{q-2} \oA^* g\,\wrt\mu\Big) 
\geq0\,.
$$
Items (i) and (ii) are now a consequence of the well-known Lumer-Phillips theorem; see e.g. \cite[Corollary~4.4]{pazy}.

In order to prove (iii), we combine the complex-time-heat-flow technique, developed in \cite[Section 4]{CD-mult} by the present authors, with a result by Cowling, Doust, McIntosh and Yagi \cite{CDMY} which relates functional calculus for a sectorial operator with bilinear estimates involving the semigroup generated by the sectorial operator.

It follows from (i) that $\oA$ is (one-to-one and) sectorial of angle $\omega(\oA)\leq\theta^*$, see e.g. \cite[Proposition 3.4.4]{Haase}. Denote by $(T(t))_{t>0}$ the semigroup generated by $-\oA$ on $L^{p}(\mu)$. Then $(T^{*}(t))_{t>0}$ is the semigroup generated by $-\oA^{*}$ on $L^{q}(\mu)$. Therefore, by \cite[Theorem 4.6 and Example 4.8]{CDMY}, part (iii) will follow once that we have proved the following bilinear estimate, 
\begin{equation}\label{eq: fc bilinear}
\int^{\infty}_{0}\mod{\int_{\Omega}\oA T(te^{\pm i\theta})(u) {\overline{T^{*}(te^{\mp i\theta})(v)}} \wrt\mu}\wrt t\leq C_{2}\|u\|_{p}\|v\|_{q},
\end{equation}
for all $u\in L^{p}(\mu)$, for every $v\in L^{q}(\mu)$ and for some $C_{2}=C_{2}(p,C_{0})>0$.

\medskip
For the purpose of proving \eqref{eq: fc bilinear} we apply the heat-flow technique developed in \cite{CD-mult}. Fix $u\in L^{p}(\mu)$, $v\in L^{q}(\mu)$ and  consider the functional
$$
\cE(t)=\int_{\Omega}\cQ\left(T(te^{\pm i\theta})(u),T^{*}(te^{\mp i\theta})(v)\right)\wrt\mu,\quad t\geq 0.
$$
Estimates of Proposition~\ref{p: estimates Bellman} ensure that $\cE$ is continuous on $[0,\infty)$, differentiable on $(0,\infty)$ with a continuous derivative and
\begin{equation*}
\label{eq: flow der}
\cE'(t)=\int_\Omega \frac{\pd}{\pd  t}\cQ(T(te^{\pm i\theta})(u),T^{*}(te^{\mp i\theta})(v))\wrt\mu.
\end{equation*}
A straightforward calculation shows that, for every $t>0$,
$$
\aligned
-\cE'(t)
=2\Re\int_\Omega &\Big[e^{\pm i\theta}\left(\partial_\zeta \cQ\right)(T(te^{\pm i\theta})(u),T^{*}(te^{\mp i\theta})(v))\cdot\oA T(te^{\pm i\theta})(u)\\
&+ e^{\mp i\theta}\left(\partial_\eta \cQ\right)(T(te^{\pm i\theta})(u),T^{*}(te^{\mp i\theta})(v))\cdot\oA^{*} T^{*}(te^{\mp i\theta})(v)\Big]\wrt\mu.
\endaligned
$$
By (i) and (ii), for every $t>0$ we have $T(te^{\pm i\theta})(u)\in \Dom(\oA)$ and $T^{*}(te^{\mp i\theta})(v)\in \Dom(\oA^{*})$. Therefore,  it follows from the assumption \eqref{eq: heat flow infinitesimal} that
\begin{equation*}
-\cE'(t)\geq C_{0}\mod{\int_{\Omega}\oA T(te^{\pm i\theta})(u) {\overline{T^{*}(te^{\mp i\theta})(v)}} \wrt\mu},\quad t>0.
\end{equation*}
By integrating from $0$ to $\infty$ both sides of the inequality above, and using the first estimate in Proposition~\ref{p: estimates Bellman}, we obtain
$$
C_{0}\int^{\infty}_{0}\mod{\int_{\Omega}\oA T(te^{\pm i\theta})(u) {\overline{T^{*}(te^{\mp i\theta})(v)}} \wrt\mu}\wrt t\leq (1+\delta)\left(\|v\|^{p}_{p}+\|v\|^{q}_{q}\right).
$$
The bilinear estimate \eqref{eq: fc bilinear} (hence the theorem) now follows by replacing $u$ with $k u$ and $v$ with $v/k$ and minimising with respect to $k>0$ the right-hand side of the inequality above.
\end{proof}
\begin{remark}
When $\theta=0$, similar heat-flow techniques corresponding to Bellman functions have so far been employed in the Euclidean case \cite{PV, NV, DV, DV-Sch, DV-Kato, DoPe} and recently also in the Riemannian case \cite{CD}. For a different perspective on heat-flow techniques, various examples and references we refer the reader to the papers by Bennett et al. \cite{Bennett, BCCT}.
\end{remark}

\section{Convexity of the Bellman function.} 
In this section we state the convexity result (Theorem~\ref{t: convexity}) that will be the principal ingredient of the proof of Theorem~\ref{t: fc principal}. The very same convexity result will be used again for proving the infinite dimensional analogue of Theorem~\ref{t: convexity} (see Theorem~\ref{I: t: fc principal}). For this reason we state Theorem~\ref{t: convexity} in rather general form. In order to do that we first  fix more notation.

Let $\cH$ be a real (possibly infinite dimensional) separable Hilbert space. Recall that we denoted by $\cH^\C$ its complexification. If $\xi=(\xi_{1},\xi_{2})\in \cH\times\cH$ we use the notation
$$
\tilde\xi=\xi_{1}+i\xi_{2}\in \cH^\C.
$$
We write $\xi_1=\Re\tilde\xi$ and $\xi_2=\Im\tilde\xi$.

If $D\in\cB(\cH^\C)$, denote respectively by $\Re D$ and $\Im D$ the real and the imaginary part of $D$. Note that $\Re D,\Im D\in\cB(\cH)$; by abuse of notation we use the same symbol for denoting their complexifications, so that we write
$$
D=(\Re D)+i(\Im D).
$$

We introduce the following bounded operator matrix acting on $\cH\times\cH=\R^{2}\otimes \cH$
$$
\cM(D)=\left[
\begin{array}{rr}
\Re D & -\Im D\\
\Im D  & \Re D
\end{array}
\right]\,.
$$
Observe that $\cM(D^*)=\cM(D)^{*}$ and $\cM(DE)=\cM(D)\cM(E)$, for all $D,E\in\cB(\cH^\C)$, and that
\begin{equation}\label{eq: sesqui*}
\sk{\cM(D)\alpha}{\beta}_{\cH\times\cH}=\Re\sk{D\tilde\alpha}{\tilde\beta}_{\cH^\C},\quad \forall\alpha,\beta\in \cH^\C.
\end{equation}
\begin{notation}
\label{d: H^{M}}
Let $\cH$ be a real separable Hilbert space. Suppose that $D,E\in \cB(\cH^\C)$, $\Psi:\R^{2}\rightarrow \R$ and $\Phi:\R^{4}\rightarrow \R$. For all $s\in\R^{2}$ and $v\in\R^{4}$, and for every  $\xi=(\xi_{1},\xi_{2})\in \cH^2$ and  $\omega=(\alpha_{1},\alpha_{2},\beta_{1},\beta_{2})\in \cH^4$, we set
$$
H^{D}_{\Psi}[s;\xi]= \sk{({\rm Hess}(\Psi;s)\otimes I_{\cH})\xi}{\cM(D)\xi}_{\cH^2}
$$
and
$$
 H^{(D,E)}_{\Phi}[v;\omega]=
 \sk{({\rm Hess}(\Phi;v)\otimes I_{\cH})\omega}{\left[\cM(D)\oplus \cM(E)\right]\omega}_{\cH^4}.
$$
In block notation,
$$
H^{D}_{\Psi}[s;\xi]=
\Sk{ {\rm Hess}(\Psi;s)
\left[\begin{array}{c}\xi_1 \\\xi_2\end{array}\right]}
{\left[\begin{array}{cr}\Re D & -\Im D \\ \Im D & \Re D\end{array}\right]
\left[\begin{array}{c}\xi_1 \\ \xi_2\end{array}\right]
}_{\cH^2}
$$
and
$$
H^{(D,E)}_{\Phi}[v;\omega]=
\Sk{ {\rm Hess}(\Phi;v) 
\left[\begin{array}{c}\alpha_1 \\ \alpha_2 \\ \beta_1 \\ \beta_2\end{array}\right]}
{\left[\begin{array}{crcr}\Re D & -\Im D &  &  \\ \Im D & \Re D&  &  \\ &  & \Re E &-\Im E  \\ &  & \Im E & \Re E \end{array}\right] 
\left[\begin{array}{c}\alpha_1 \\ \alpha_2 \\ \beta_1 \\ \beta_2\end{array}\right]
}_{\cH^4}.
$$
\end{notation}
Note that for every $s\in \R^{2}$ and $\xi=(\xi_{1},\xi_{2})\in \cH^2$, the map $\cB(\cH^\C)\rightarrow \R$, defined by 
$D\mapsto H^{D}_{\Psi}[s;\xi]$, 
is $\R$-linear. In particular, for $R\in\cB(\cH)$ and $\theta\in [0,\pi/2]$ we have
\begin{equation}\label{eq: R-linear}
H^{e^{\pm i\theta}R}_{\Psi}[s;\xi]=\cos\theta \cdot H^{R}_{\Psi}[s;\xi]\pm \sin\theta \cdot H^{iR}_{\Psi}[s;\xi].
\end{equation}
If $R\in \cB(\cH)$ one has 
$
H^{R}_{\Psi}[s;\xi]=H^{R^{*}}_{\Psi}[s;\xi],
$
so that $H^{R}_{\Psi}[s;\xi]=H^{R_{{\bf s}}}_{\Psi}[s;\xi]$. In particular, if $R\in\cB(\cH)$ is accretive (i.e. if $R_{{\bf s}}$ is nonnegative) we have
\begin{equation}\label{eq: adjoint}
\aligned
H^{R}_{\Psi}[s;\xi]
&=H^{I}_{\Psi}\left[s;\left(R^{1/2}_{{\bf s}}\xi_{1},R^{1/2}_{{\bf s}}\xi_{2}\right)\right]\\
&=\Sk{{\rm Hess}(\Psi;s)\cdot\left[\begin{array}{c}R^{1/2}_{{\bf s}}\xi_1 \\ R^{1/2}_{{\bf s}}\xi_2\end{array}\right]}    {\left[\begin{array}{c}R^{1/2}_{{\bf s}}\xi_1 \\ R^{1/2}_{{\bf s}}\xi_2\end{array}\right]}_{\cH^2}.
\endaligned
\end{equation}

Let $\{e_{j}\}^{\infty}_{j=1}$ be an orthonormal base of $\cH$. Then for all $s\in\R^{2}$ and  $\xi=(\xi_{1},\xi_{2})\in \cH^2$ one has 
\begin{equation}
\label{eq: tensor}
H^{I}_{\Psi}\left[s;\xi\right]=\sum^{\infty}_{j=1}\Big\langle {\rm Hess}(\Psi;s)\cdot\left[\begin{array}{c}\xi_{1,j} \\ \xi_{2,j}\end{array}\right] ,   \left[\begin{array}{c}\xi_{1,j} \\ \xi_{2,j}\end{array}\right]\Big\rangle_{\R^{2}},
\end{equation}
where $\xi_{i}=\sum^{\infty}_{j=1}\xi_{i,j}e_{j}$ for $i=1,2$.\\
 
\subsection*{Numerical range angle}
Fix a real separable Hilbert space $\cH$. 
Suppose the operator $B\in\cB(\cH)$ is {\it strictly accretive}; i.e. suppose that $\sk{B\xi}{\xi}\geq\lambda\nor{\xi}^2$ for some $\lambda>0$ and all $\xi\in \cH$ \cite{K}.   
Clearly, its complexification $B^\C$ is then strictly accretive as well.

By a small abuse of notation, we will henceforth denote the complexification $B^\C$ of $B$ just by $B$.

Let $\theta^{*}_{2}=\theta^{*}_{2}(B)$ be the angle of the smallest closed sector in $\C_{+}$ contaning the numerical range of $B$; i.e. $\theta^{*}_{2}$ denotes the smallest angle in $ [0,\pi/2)$ for which 
\begin{equation}\label{eq: N2 bis}
|\Im\sk{B\xi}{\xi}_{\cH^\C}|\leq \tan\theta^{*}_{2}\cdot\Re\sk{B\xi}{\xi}_{\cH^\C},\quad \forall \xi\in \cH^\C.
\end{equation}
From \eqref{eq: sim and antisim} and the strict accretivity of $B$ it follows that 
its symmetric part $B_{{\bf s}}$ is invertible and positive definite, and 
$$
\tan\theta^{*}_{2}=\sup \left\{\left|\Sk{B^{-1/2}_{{\bf s}}B_{{\bf a}}B^{-1/2}_{{\bf s}}\xi}{\xi}_{\cH^\C}\right|:\ \norm{\xi}{\cH^\C}=1\right\}.
$$ 
The operator $B^{-1/2}_{{\bf s}}B_{{\bf a}}B^{-1/2}_{{\bf s}}$ is normal, therefore we may conclude that
\begin{equation}
\label{eq: theta and norm}
\tan\theta^{*}_{2}=\norm{B^{-1/2}_{{\bf s}}B_{{\bf a}}B^{-1/2}_{{\bf s}}}{\cB\left(\cH^\C\right)}.
\end{equation}
See, for example, \cite[Theorem 1.4-2]{GR} or \cite[Theorem 12.25]{RU2}.

For $1<r<\infty$, define $\theta^{*}_r\in(0,\pi/2]$ by means of \eqref{eq: 1 theta r}. We are now ready to state the convexity result for the Bellman function $\cQ$ defined in \eqref{eq: defi Bellman}.

\begin{theorem}
\label{t: convexity} 
Let $B\in\cB(\cH)$ and $\theta^{*}_2=\theta^{*}_2(B)$ be as above. Fix $p\in[2,\infty)$. For every $0\leq\theta<\theta_{p}$ there exist $\delta=\delta(p,\theta_{2},\theta)\in(0,1)$ and $a_{0}=a_{0}(p,\theta_{2},\theta)>0$ such that, if $\cQ$ is the Bellman function \eqref{eq: defi Bellman} associated with $\delta$, and
$$
C\in\{e^{i\theta}B, e^{-i\theta}B, e^{i\theta}B^{*},e^{-i\theta}B^{*}\},
$$
then
\begin{equation}\label{eq: final}
H^{(C,C^{*})}_{\cQ}[v;\omega]\geq a_{0}\cdot\Nor{B_{{\bf s}}^{1/2}\tilde\alpha}_{\cH^\C}\Nor{B_{{\bf s}}^{1/2}\tilde\beta}_{\cH^\C},
\end{equation}
for all $v\in\R^{4}\setminus \Upsilon_{0}$ and $\omega=(\alpha,\beta)\in \cH^2\times \cH^2$. 
\end{theorem}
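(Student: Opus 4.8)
\emph{Proof sketch.}

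The plan is to reduce \eqref{eq: final} to a sharp pointwise convexity inequality for the restriction of $\cQ$ to the two regions where it is smooth, and to recognise that the critical angle there is exactly $\theta_p$. First I would strip away inessential generality. Since $B^{*}$ is again strictly accretive with the same numerical‑range angle $\theta_2^{*}$, and the defining formulas are unchanged under $\theta\mapsto-\theta$, it suffices to treat $(C,C^{*})=(e^{i\theta}B,e^{-i\theta}B^{*})$. Next, $B_{{\bf s}}$ being positive and invertible and $({\rm Hess}(\cQ;v)\otimes I_{\cH})$ commuting with $I_{4}\otimes B_{{\bf s}}^{-1/2}$, the substitution $\tilde\alpha\mapsto B_{{\bf s}}^{-1/2}\tilde\alpha$, $\tilde\beta\mapsto B_{{\bf s}}^{-1/2}\tilde\beta$ replaces $C$ effectively by $e^{i\theta}(I+K)$ and $C^{*}$ by $e^{-i\theta}(I-K)$, where $K=B_{{\bf s}}^{-1/2}B_{{\bf a}}B_{{\bf s}}^{-1/2}$ is skew‑adjoint with $\nor{K}=\tan\theta_2^{*}$ by \eqref{eq: theta and norm}, while the right‑hand side of \eqref{eq: final} becomes $a_{0}\nor{\tilde\alpha}\nor{\tilde\beta}$. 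Finally $\cQ$ is homogeneous of weighted degree $p$, i.e. $\cQ(\lambda\zeta,\lambda^{p/q}\eta)=\lambda^{p}\cQ(\zeta,\eta)$, and the reduced inequality is invariant under the corresponding rescaling $(\zeta,\eta,\alpha,\beta)\mapsto(\lambda\zeta,\lambda^{p/q}\eta,\lambda^{(2-p)/2}\alpha,\lambda^{(p-2)/2}\beta)$; so I may normalise $v$, say to $|\eta|=1$, and argue separately on $\Omega_{1}=\{|\zeta|^{p}<1\}$ and $\Omega_{2}=\{|\zeta|^{p}>1\}$, on which $\cQ$ is explicit.

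The crux is the contribution of the diagonal blocks $\cQ_{\zeta\zeta}$, $\cQ_{\eta\eta}$. Decomposing $\alpha=\hat\zeta\otimes\alpha_{r}+\hat\zeta^{\perp}\otimes\alpha_{a}$ in the radial/angular frame of $\zeta$, and using \eqref{eq: R-linear} together with the explicit form of $\cM$ (so that $\cM(iI)=J_{0}\otimes I_{\cH}$ and $\cM(iK)=J_{0}\otimes K$, with $J_{0}$ the quarter‑turn on $\R^{2}$), a short computation gives
$$
H^{e^{i\theta}(I+K)}_{|\zeta|^{p}}[\zeta;\alpha]=\cos\theta\,(\mu_{r}\nor{\alpha_{r}}^{2}+\mu_{a}\nor{\alpha_{a}}^{2})+\sin\theta\,(\mu_{a}-\mu_{r})\sk{\alpha_{r}}{\alpha_{a}}-\sin\theta\,(\mu_{r}+\mu_{a})\sk{\alpha_{r}}{K\alpha_{a}},
$$
with $\mu_{r}=p(p-1)|\zeta|^{p-2}$, $\mu_{a}=p|\zeta|^{p-2}$ the eigenvalues of ${\rm Hess}\,|\zeta|^{p}$. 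Completing the square in $\alpha_{a}$ and bounding $\nor{K\alpha_{r}}\leq(\tan\theta_2^{*})\nor{\alpha_{r}}$, this form is nonnegative for all $\alpha$ exactly when
$$
\tan^{2}\theta\leq\frac{4\mu_{r}\mu_{a}}{(\mu_{a}-\mu_{r})^{2}+(\mu_{r}+\mu_{a})^{2}\tan^{2}\theta_2^{*}}=\frac{4(p-1)}{(p-2)^{2}+p^{2}\tan^{2}\theta_2^{*}},
$$
which by \eqref{eq: 1 theta r} and $\theta_p=\pi/2-\theta_p^{*}$ equals $\tan^{2}\theta_p$; hence the form is $\geq0$ iff $\theta\leq\theta_p$. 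This is precisely the accretivity computation behind the $L^{p}$‑analyticity angle of the Ornstein--Uhlenbeck operator, and I would isolate it — together with the quantitative gain valid for $\theta<\theta_p$, and with its $\delta$‑perturbed version on $\Omega_{1}$, where $\cQ_{\zeta\zeta}$ carries the extra isotropic term $2\delta|\eta|^{2-q}I$ — as Proposition~\ref{p: convexity of Fr}, invoking \cite{CFMP, CFMP2}. The identical computation for $\cQ_{\eta\eta}$, with $p$ replaced by $q$ and $\theta$ by $-\theta$, gives $H^{e^{-i\theta}(I-K)}_{|\eta|^{q}}[\eta;\beta]\geq0$ under the same condition since $\theta_q^{*}=\theta_p^{*}$. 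For $\theta<\theta_p$ strictly one obtains, for some $\epsilon=\epsilon(p,\theta_2,\theta)>0$,
$$
H^{e^{i\theta}(I+K)}_{|\zeta|^{p}}[\zeta;\alpha]\geq\epsilon\,|\zeta|^{p-2}\nor{\tilde\alpha}^{2},\qquad H^{e^{-i\theta}(I-K)}_{|\eta|^{q}}[\eta;\beta]\geq\epsilon\,|\eta|^{q-2}\nor{\tilde\beta}^{2}.
$$

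It remains to assemble these bounds. On $\Omega_{2}$ the function $\cQ$ is separable, so $H^{(C,C^{*})}_{\cQ}$ is a positive multiple of $H^{e^{i\theta}(I+K)}_{|\zeta|^{p}}[\zeta;\alpha]+H^{e^{-i\theta}(I-K)}_{|\eta|^{q}}[\eta;\beta]\gtrsim|\zeta|^{p-2}\nor{\tilde\alpha}^{2}+|\eta|^{q-2}\nor{\tilde\beta}^{2}\gtrsim\nor{\tilde\alpha}^{2}+\nor{\tilde\beta}^{2}\geq2\nor{\tilde\alpha}\nor{\tilde\beta}$, using $|\zeta|>1$ and $p\geq2$. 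On $\Omega_{1}$ the sole off‑diagonal term comes from the monomial $\delta|\zeta|^{2}|\eta|^{2-q}$; the extra diagonal term it puts into $\cQ_{\zeta\zeta}$ contributes the positive quantity $2\delta|\eta|^{2-q}\Re\sk{e^{i\theta}B\tilde\alpha}{\tilde\alpha}\gtrsim\delta|\eta|^{2-q}\Nor{B_{{\bf s}}^{1/2}\tilde\alpha}^{2}$ (decisive near $\zeta=0$, where $|\zeta|^{p-2}$ degenerates when $p>2$), while its remaining contributions to $\cQ_{\eta\eta}$ and the genuine cross term are $O(\delta)$; choosing $\delta=\delta(p,\theta_2,\theta)\in(0,1)$ small enough that these $O(\delta)$ terms are swallowed by a fraction of the three positive pieces, one gets on $\Omega_{1}$ that $H^{(C,C^{*})}_{\cQ}\gtrsim(|\zeta|^{p-2}+\delta|\eta|^{2-q})\nor{\tilde\alpha}^{2}+|\eta|^{q-2}\nor{\tilde\beta}^{2}$. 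With $|\eta|=1$ the right side is $\gtrsim(|\zeta|^{p-2}+\delta)\nor{\tilde\alpha}^{2}+\nor{\tilde\beta}^{2}$, and the arithmetic--geometric inequality yields $\gtrsim\sqrt{\delta}\,\nor{\tilde\alpha}\nor{\tilde\beta}$; undoing the initial substitution restores $B_{{\bf s}}^{1/2}$ and produces \eqref{eq: final} with $a_{0}\sim\sqrt{\delta}$.

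I expect the genuine difficulty to sit in the diagonal estimate and its sharp matching with $\theta_p$: one must confirm that the threshold of the quadratic form is exactly $4(p-1)/((p-2)^{2}+p^{2}\tan^{2}\theta_2^{*})$ — in particular that the worst case is $\nor{K\alpha_{r}}=(\tan\theta_2^{*})\nor{\alpha_{r}}$, which needs a brief monotonicity argument — and, more delicately, that the $\delta$‑perturbed threshold relevant on $\Omega_{1}$ degrades only to $\tan^{2}\theta_p+O(\delta)$ uniformly up to the free boundary $\{|\zeta|^{p}=|\eta|^{q}\}$ and near $\zeta=0$. Securing this uniform control near $\Upsilon_{0}$ is what forces $\delta$ to depend on the gap $\theta_p-\theta$, and is exactly the content I would package into Proposition~\ref{p: convexity of Fr}, where the computation of the Ornstein--Uhlenbeck analyticity angle from \cite{CFMP, CFMP2} enters.
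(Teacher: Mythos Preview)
Your sketch is correct and follows essentially the paper's route: the diagonal estimate you obtain via the radial/angular decomposition is exactly Proposition~\ref{p: convexity of Fr} combined with Lemmas~\ref{l: domination H}--\ref{l: H Fr} (the paper keeps $B$ rather than normalising to $I+K$, but the computation is the same), and the assembly on the two regions matches the final proof, where on $\{|\zeta|^p<|\eta|^q\}$ the paper makes the absorption explicit by writing out the $2\times2$ matrix in $(|\eta|^{(2-q)/2}\nor{S\tilde\alpha},|\eta|^{(q-2)/2}\nor{S\tilde\beta})$ that must be positive definite, rather than saying ``$O(\delta)$''.

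One small correction to your closing paragraph: there is no ``$\delta$-perturbed threshold'' for the $\zeta$-block on $\Omega_1$ to worry about---the extra isotropic term $2\delta|\eta|^{2-q}I$ in $\cQ_{\zeta\zeta}$ only \emph{improves} that block, as you yourself note earlier. The absorption difficulty lives entirely in the $\eta$-block (where ${\rm Hess}\,F_{2-q}$ is not positive) and in the genuine cross term; in the paper these are handled by Corollaries~\ref{c: H F2-r} and~\ref{c: F2tensorF2-r}, and it is this step---not anything inside Proposition~\ref{p: convexity of Fr}---that forces $\delta$ to depend on the gap $\theta_p-\theta$. Proposition~\ref{p: convexity of Fr} itself is purely the unperturbed $F_r$ estimate and contains no $\delta$.
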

The proof of Theorem~\ref{t: convexity} is postponed to Section~\ref{s: proof of convexity thm}.

\begin{remark}\label{r: gen convex}
Theorem~\ref{t: convexity} is a generalisation of  \cite[Theorem~15]{CD-mult}. Indeed, the quantity $\cR_{\phi}(Q)[v;\omega]$ defined in \cite[equation (18)]{CD-mult} is nothing but $H^{e^{i\phi}I_{\C},e^{-i\phi}I_{\C}}_{\cQ}[v;\omega]$. Moreover, when $B=B^{*}=I_{\C}$ we have that $\theta^{*}_{2}=0$, so that $\theta_{p}=\phi_p=\arccos|1-2/p|$. Therefore, \cite[Theorem~15]{CD-mult} corresponds to Theorem~\ref{t: convexity} in the particular case when $B=B^{*}=I_{\C}$. Note also that Theorem~\ref{t: convexity} can be considered as an extension of \cite{DV-Kato}, where A. Volberg and the second author of the present paper proved the analogue of \eqref{eq: final} for $\theta=0$, $C\in\cB(\R^{n})$ and $C^{*}$ replaced by $C$.  
\end{remark}

\subsection*{Regularisation}
Denote by $*$ convolution in $\R^{4}$, and let $(\psi_\epsilon)_{\epsilon>0}$ be a nonnegative, smooth and compactly supported approximation to the identity in $\R^4$. Since $\cQ\in C^1(\R^4)$ and its second-order partial derivatives exist on $\R^4\setminus \Upsilon_0$ and are locally integrable in $\R^4$, for all $C\in\cB(\cH^\C)$
\begin{equation}\label{eq: convolution}
H^{(C,C^{*})}_{\cQ*\psi_{\epsilon}}[v;\omega]=\int_{\R^4}H^{(C,C^{*})}_{\cQ}[v-v';\omega]\psi_\epsilon(v')\wrt v',
\end{equation}
for every $r>0$, $v\in\R^4$ and every $\omega=(\omega_1,\omega_2)\in \cH^2\times \cH^2$; see \cite[Th\'eor\`eme~V]{Schwartz} and \cite[Theorem~2.1]{Eberle}.

\begin{corollary}
\label{c: convexity}
Let $\epsilon>0$. Under the assumptions of Theorem~\ref{t: convexity},
\begin{equation*}
H^{(C,C^{*})}_{\cQ*\psi_{\epsilon}}[v;\omega]
\geq a_{0}(p,\theta_{2},\theta)\cdot \Nor{B_{{\bf s}}^{1/2}\tilde\alpha}_{\cH^\C}\Nor{B_{{\bf s}}^{1/2}\tilde\beta}_{\cH^\C},
\end{equation*}
for all $v\in\R^4$ and $\omega=(\alpha,\beta)\in \cH^2\times \cH^2$.
\end{corollary}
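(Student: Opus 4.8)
The plan is to derive Corollary~\ref{c: convexity} directly from Theorem~\ref{t: convexity} by averaging the pointwise inequality \eqref{eq: final} against the mollifier $\psi_\epsilon$. The key observation is that the bilinear form $H^{(C,C^*)}_{\cQ}[\,\cdot\,;\omega]$ is, for fixed $\omega$, simply a linear functional applied to the Hessian $\mathrm{Hess}(\cQ;\cdot)$, so convolution of $\cQ$ with $\psi_\epsilon$ translates into an integral average of the functionals $H^{(C,C^*)}_{\cQ}[v-v';\omega]$; this is exactly the content of the already-stated identity \eqref{eq: convolution}. Thus the work is to integrate the lower bound in \eqref{eq: final} and check that the exceptional set $\Upsilon_0$ causes no trouble.

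The steps, in order, are as follows. First I would fix $\epsilon>0$, $v\in\R^4$, and $\omega=(\alpha,\beta)\in\cH^2\times\cH^2$, and invoke \eqref{eq: convolution} to write $H^{(C,C^*)}_{\cQ*\psi_\epsilon}[v;\omega]=\int_{\R^4}H^{(C,C^*)}_{\cQ}[v-v';\omega]\,\psi_\epsilon(v')\wrt v'$. Second, I would note that $\Upsilon_0$ is a finite union of hypersurfaces in $\R^4$ (the set $\eta=0$ together with the set $|\zeta|^p=|\eta|^q$), hence is Lebesgue-null; therefore for Lebesgue-a.e. $v'$ the point $v-v'$ lies in $\R^4\setminus\Upsilon_0$, and at such points Theorem~\ref{t: convexity} applies to give $H^{(C,C^*)}_{\cQ}[v-v';\omega]\geq a_0(p,\theta_2,\theta)\,\Nor{B_{\mathbf s}^{1/2}\tilde\alpha}_{\cH^\C}\Nor{B_{\mathbf s}^{1/2}\tilde\beta}_{\cH^\C}$. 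Third, since $\psi_\epsilon\geq0$ and $\int_{\R^4}\psi_\epsilon=1$, integrating this a.e.\ pointwise inequality against $\psi_\epsilon(v')\wrt v'$ preserves the bound, yielding precisely the claimed estimate for all $v\in\R^4$ and all $\omega$.

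There is essentially no serious obstacle here; the statement is a routine mollification corollary. The only point requiring a modicum of care is the justification of \eqref{eq: convolution} itself — that one may differentiate $\cQ*\psi_\epsilon$ under the integral sign and that the second-order distributional derivatives of $\cQ$ coincide with the classical ones computed on $\R^4\setminus\Upsilon_0$ — but this has already been asserted in the excerpt, with references to \cite[Th\'eor\`eme~V]{Schwartz} and \cite[Theorem~2.1]{Eberle}, using that $\cQ\in C^1(\R^4)$ and that its second-order partials exist off $\Upsilon_0$ and are locally integrable. Granting that, the corollary follows in three lines, and I would present it in that compact form: apply \eqref{eq: convolution}, apply Theorem~\ref{t: convexity} for a.e.\ $v'$ using that $\Upsilon_0$ is null, and integrate against the probability density $\psi_\epsilon$.
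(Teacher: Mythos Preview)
Your proposal is correct and follows exactly the same route as the paper, which simply says the corollary ``immediately follows from \eqref{eq: convolution} and Theorem~\ref{t: convexity}.'' Your added remarks that $\Upsilon_0$ is Lebesgue-null and that $\psi_\epsilon$ is a nonnegative probability density are the natural unpacking of that one-line proof.
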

\begin{proof}
The corollary immediately follows from \eqref{eq: convolution} and Theorem~\ref{t: convexity}.
\end{proof}

\section{Proof of Theorem~\ref{t: convexity}}\label{s: proof of convexity thm} 
Recall that $\cH$ is a separable real Hilbert space, $B$ is a bounded strictly accretive operator on $\cH$ and $B_{\bf s}$ the symmetric part of $B$. Recall also that by a small abuse of notation, we denote the complexification $B^\C$ of $B$ just by $B$.

Take $p\geq 2$, $q=p/(p-1)$ and a parameter $\delta>0$ that will be fixed later. It is convenient to rewrite the Bellman function defined in \eqref{eq: defi Bellman} as a linear combination of tensor products of power functions. For $r\geq 0$ define the power function $F_r: \R^{2} \rightarrow \R_+$ by the rule
$$
F_{r}(s)=|s|^{r}.
$$
Let $\bena$ denote the constant function of value $1$ on $\C$; i.e. $\bena=F_0$. Then
\begin{equation*}
\cQ= \begin{cases}
(1+2\delta/p) F_p\otimes\bena +[1+\delta(1-2/p)]\bena\otimes F_q,& {\rm if }\quad |\zeta|^p\geq |\eta|^q,\\
\ &\\
F_p\otimes\bena+\bena\otimes F_q+\delta F_2\otimes F_{2-q}, & {\rm if }\quad |\zeta|^p\leq |\eta|^q.
\end{cases}
\end{equation*}
\label{eq: Bellman with powers} 
Therefore, by Notation~\ref{d: H^{M}},
\begin{equation}
\label{eq: HQ}
H^{(C,C^{*})}_{\cQ}[v;\omega]= 
\begin{cases}
(1+2\delta/p) H^{C}_{F_p}[\zeta;\alpha] +[1+\delta(1-2/p)]H^{C^{*}}_{F_q}[\eta;\beta],& {\rm if }\quad |\zeta|^p> |\eta|^q>0,\\
\ &\\
H^{C}_{F_p}[\zeta;\alpha]+H^{C^{*}}_{F_q}[\eta;\beta]+\delta H^{(C,C^{*})}_{F_2\otimes F_{2-q}}[v;\omega], & {\rm if }\quad |\zeta|^p< |\eta|^q,
\end{cases}
\end{equation}
for all $\omega=(\alpha,\beta)\in \cH^2\times \cH^2$.

\subsection*{Convexity of power functions.}
Note that ${\rm Hess}(F_{2};s)=2I_{\R^{2}}$, for all $s\in\R^{2}$. Hence, for every $\xi=(\xi_{1},\xi_{2})\in \cH^2$, one has   
\begin{align*}
2\Re\sk{B\tilde\xi}{\tilde\xi}_{\cH^\C}=H^{B}_{F_{2}}[s;\xi]
\hskip 30pt \text{and}\hskip 30pt 
2\Im\sk{B\tilde\xi}{\tilde\xi}_{\cH^\C}=-H^{iB}_{F_{2}}[s;\xi] .
\end{align*}
Therefore, by \eqref{eq: N2 bis}, for all $s\in \R^{2}$ and every $\xi\in \cH^2$,
\begin{equation}
\label{eq: Nr of B}
|H^{iB}_{F_{2}}[s;\xi]|\leq \cot\theta_{2}\cdot H^{B}_{F_{2}}[s;\xi].
\end{equation}
Moreover, the very same estimate holds with $B$ replaced by $B^{*}$.

\medskip
We now show that an estimate analogous to \eqref{eq: Nr of B} holds when $F_{2}$ is replaced by $F_{r}$, $r>1$, and $\theta_{2}$ is replaced by $\theta_{r}$, $r>1$. It turns out that this is just a reformulation of \cite[Theorem 2]{CFMP} and \cite[Theorem 3.4]{MvN} (see also \cite[Theorem 1.1]{CFMP2}).

\begin{proposition}
\label{p: convexity of Fr}
For all $\xi\in \cH^2$ and every $s\in \R^{2}$, if $r\geq 2$, or every $s\in \R^{2}\setminus\{0\}$, if $1<r<2$, we have 
\begin{equation*}
|H^{iB}_{F_{r}}[s;\xi]|\leq \cot\theta_{r}\cdot H^{B}_{F_{r}}[s;\xi].
\end{equation*}
Moreover, the very same estimate holds with $B$ replaced by $B^{*}$.
\end{proposition}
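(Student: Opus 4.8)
The plan is to reduce the claimed inequality to a purely finite-dimensional statement about the power function $F_r$ on $\R^2$ and then to recognise that statement as exactly what is proved in \cite[Theorem~2]{CFMP} (equivalently \cite[Theorem~3.4]{MvN}, \cite[Theorem~1.1]{CFMP2}). First I would compute $\mathrm{Hess}(F_r;s)$ explicitly for $s\in\R^2\setminus\{0\}$: writing $F_r(s)=|s|^r$ one gets
\begin{equation*}
\mathrm{Hess}(F_r;s)=r|s|^{r-2}\Big(I_{\R^2}+(r-2)\,\widehat s\otimes\widehat s\Big),\qquad \widehat s=s/|s|,
\end{equation*}
which is symmetric positive definite for $r>1$ (its eigenvalues are $r|s|^{r-2}$ and $r(r-1)|s|^{r-2}$), and for $r\geq 2$ extends continuously to $s=0$ (vanishing there when $r>2$). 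Then, by Notation~\ref{d: H^{M}} and the tensorisation identity \eqref{eq: tensor}, $H^{B}_{F_r}[s;\xi]$ and $H^{iB}_{F_r}[s;\xi]$ are obtained by sandwiching the fixed $2\times 2$ matrix $\mathrm{Hess}(F_r;s)$ against the operator matrices $\cM(B)$, $\cM(iB)$ on $\cH^2$. Since $\mathrm{Hess}(F_r;s)$ has the form $c_1 I + c_2\, \widehat s\otimes\widehat s$ with $c_1,c_2\geq 0$ (when $r\geq 2$) or $c_1>0>c_2$ handled via the positive square root, the quadratic form $H^B_{F_r}[s;\xi]$ is, up to the positive scalar $r|s|^{r-2}$, a sum of two terms of the shape $\Re\sk{B\tilde\zeta}{\tilde\zeta}_{\cH^\C}$ for suitable $\tilde\zeta\in\cH^\C$ (one coming from the $I$ part, one from the rank-one part after taking the positive square root of the $2\times2$ matrix).

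The key step is therefore the following reduction: I claim there is a linear change of variables, depending only on $r$ and $\mathrm{Hess}(F_r;s)$ (hence only on $\widehat s$ and $r$), realising
\begin{equation*}
H^{B}_{F_r}[s;\xi]=r|s|^{r-2}\,\Re\sk{B\otimes I\,(\Lambda_s\xi)}{\Lambda_s\xi},\qquad H^{iB}_{F_r}[s;\xi]=r|s|^{r-2}\,\Im\ldots,
\end{equation*}
wait — more carefully, writing $M_s:=\mathrm{Hess}(F_r;s)=r|s|^{r-2}N_r(\widehat s)$ with $N_r$ independent of $|s|$, and using \eqref{eq: sesqui*} together with $\cM(M_s\otimes I)=M_s\otimes I$ commuting with $\cM(B)$ only after symmetrisation, the cleanest route is to observe that both $H^{B}_{F_r}[s;\xi]$ and $H^{iB}_{F_r}[s;\xi]$ are, after scaling out $r|s|^{r-2}$, values of the sesquilinear form $(\zeta,\zeta')\mapsto\Re\sk{B\zeta}{\zeta'}_{\cH^\C}$ (respectively its imaginary analogue via $iB$) evaluated on the single vector $\zeta=\big(N_r(\widehat s)^{1/2}\otimes I\big)\tilde\xi\in\cH^\C$, where $N_r(\widehat s)^{1/2}$ acts on the $\R^2$ factor. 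Once in this form, the desired inequality $|H^{iB}_{F_r}[s;\xi]|\leq\cot\theta_r\cdot H^{B}_{F_r}[s;\xi]$ is \emph{identical in content} to the numerical-range estimate that defines $\theta_r$ for the operator obtained by conjugating $B$ with $N_r(\widehat s)^{1/2}$; and the statement that this conjugated numerical range still lies in $\overline{\bS}_{\theta_r}$ with $\theta_r$ given by \eqref{eq: 1 theta r} is precisely the content of \cite[Theorem~2]{CFMP} / \cite[Theorem~3.4]{MvN}, which computes the sectoriality angle of the $L^r$ Ornstein--Uhlenbeck operator and whose proof passes through exactly this algebraic inequality on $\R^2$-valued quadratic forms (one diagonalises $N_r(\widehat s)$, reducing to the $2\times2$ case, and there the optimisation over $\widehat s$ and over $\xi$ yields the closed-form angle \eqref{eq: 1 theta r}, equivalently \eqref{eq: alternative}).

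I expect the main obstacle to be bookkeeping rather than conceptual: one must (a) handle the case $1<r<2$ carefully, where $\mathrm{Hess}(F_r;s)$ blows up and changes character near $s=0$, which is why the statement excludes $s=0$ in that range; and (b) make the translation between the Ornstein--Uhlenbeck sectoriality computation in \cite{CFMP,MvN} and the present $H^{(\cdot)}_{F_r}$ formalism completely explicit — in \cite{CFMP} the relevant object is $|x|^{r-2}$-weighted and the inequality is stated for the quadratic form $\sk{B\nabla f}{\nabla \bar f}$ tested on $f=|x|^{r/2}\cdot(\text{unimodular})$, so one identifies $\mathrm{Hess}(F_r;s)$ with the matrix appearing there after the substitution. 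The remaining verification, that $\cot\theta_r$ is the sharp constant, is then a citation. The $B^*$ case is immediate since $\theta_2(B)=\theta_2(B^*)$ (as $\mathrm{Im}\sk{B^*\xi}{\xi}=-\mathrm{Im}\sk{B\xi}{\xi}$ and $\mathrm{Re}$ is unchanged), hence $\theta_r(B)=\theta_r(B^*)$, and the inequality is invariant under $B\mapsto B^*$ because $H^{B}_{F_r}=H^{B_{\mathbf s}}_{F_r}=H^{B^*}_{F_r}$ by the remark preceding \eqref{eq: adjoint} while $|H^{iB}_{F_r}|=|H^{iB^*}_{F_r}|$ similarly.
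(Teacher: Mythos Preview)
Your overall instinct --- that this proposition is a reformulation of the algebraic inequality underlying \cite[Theorem~2]{CFMP} and \cite[Theorem~3.4]{MvN} --- is correct, and your computation of $\mathrm{Hess}(F_r;s)$ and your argument for the $B^*$ case are fine. But the ``cleanest route'' you propose has a genuine error that would block the proof as written.

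The issue is the $N_r^{1/2}$--conjugation step. For $H^{B}_{F_r}$ it works: since $B$ is a real operator on $\cH$, we have $\cM(B)=I_{\R^2}\otimes B$, which commutes with $\mathrm{Hess}(F_r;s)\otimes I_{\cH}$, so you may indeed conjugate by $N_r(\widehat s)^{1/2}\otimes I_{\cH}$ and get $\Re\sk{B\tilde\eta}{\tilde\eta}_{\cH^\C}$ on a single vector $\tilde\eta$. But for $H^{iB}_{F_r}$ this fails: here $\cM(iB)=J\otimes B$ with $J=\left(\begin{smallmatrix}0&-1\\1&0\end{smallmatrix}\right)$, and $J$ does \emph{not} commute with $N_r(\widehat s)$ unless $r=2$ (a real symmetric $2\times 2$ matrix commuting with $J$ must be scalar). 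Consequently, after your conjugation, $H^{iB}_{F_r}$ becomes $\sk{((N_r^{1/2}JN_r^{-1/2})\otimes B)\eta}{\eta}$, which is \emph{not} of the form $-\Im\sk{B\tilde\eta}{\tilde\eta}_{\cH^\C}$; in the eigenbasis $\{\widehat s,\widehat s^{\perp}\}$ one gets $N_r^{1/2}JN_r^{-1/2}=\left(\begin{smallmatrix}0&-\sqrt{r-1}\\1/\sqrt{r-1}&0\end{smallmatrix}\right)$, not $J$. So the desired inequality is \emph{not} a one-vector numerical-range bound for $B$ (or for any conjugate of $B$ alone), and your appeal to ``the numerical-range estimate that defines $\theta_r$'' is not justified as stated.

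What actually makes the argument go through --- and what the paper does --- is to decompose $\xi$ along the eigendirections of $\mathrm{Hess}(F_r;s)$ rather than to symmetrise by its square root. Writing $s\leftrightarrow z\in\C$ and $\xi\leftrightarrow\tilde\xi\in\cH^\C$, one sets $x=B_{\bf s}^{1/2}\Re(\bar z\,\tilde\xi)$ and $y=B_{\bf s}^{1/2}\Im(\bar z\,\tilde\xi)$ (these are precisely the $\widehat s$- and $\widehat s^{\perp}$-components, up to the $B_{\bf s}^{1/2}$ weight) and computes explicitly
\[
H^{B}_{F_r}=r|z|^{r-4}\big(\|y\|^2+(r-1)\|x\|^2\big),\qquad
-H^{iB}_{F_r}=r|z|^{r-4}\big\langle\big[(r-2)I+rB_{\bf s}^{-1/2}B_{\bf a}B_{\bf s}^{-1/2}\big]y,\,x\big\rangle_{\cH}.
\]
This is a \emph{two-vector} bilinear expression, not a single-vector numerical range: the bound $|H^{iB}_{F_r}|\leq\cot\theta_r\cdot H^{B}_{F_r}$ then follows from Cauchy--Schwarz together with the norm identity $\|(r-2)I+rB_{\bf s}^{-1/2}B_{\bf a}B_{\bf s}^{-1/2}\|^2=(r-2)^2+r^2\tan^2\theta_2^*$ (using that $B_{\bf s}^{-1/2}B_{\bf a}B_{\bf s}^{-1/2}$ is antisymmetric), which gives exactly $\cot\theta_r=\tan\theta_r^*$ via \eqref{eq: 1 theta r}. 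The mixing of $x$ and $y$ through both $B_{\bf s}$ and $B_{\bf a}$ is the whole point; it cannot be hidden in a single conjugated vector.
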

\begin{proof}
Fix an orthonormal base $\{e_{j}\}^{\infty}_{j=1}$of $\cH$ and for each $n\in\N$ denote by $\cH_{n}$ the finite dimensional subspace of $\cH$ spanned by $\{e_{1},\dots,e_{n}\}$. Denote by $P_{n}$ the orthogonal projection of $\cH$ onto $\cH_{n}$ and set $B_{n}=P_{n}B|_{\cH_n}\in\cB(\cH_{n})$. For each $n\in\N$ identify $\cH_{n}$ with $\R^{n}$ through the canonical map 
$$
\cI_n:\sum^{n}_{j=1}u_{j}e_{j}\mapsto (u_{1},\dots,u_{n}).
$$
For each $k\in\N$ identify $\C^k$ with $\R^{2k}$ through the map $\cV:\C^k\rightarrow\R^{2k}$ defined by
$$
\cV(z_1,\hdots,z_k)=(\Re z_1,\hdots,\Re z_n,\Im z_1,\hdots,\Im z_k).
$$
If $f\in C^{1}(\R^{n},\C)$, then a simple calculation gives  
\begin{align}
r\Re\sk{B_{n}\nabla f}{\nabla(f|f|^{r-2})}_{\C^{n}}&=H^{B_{n}}_{F_{r}}[\cV (f);\cV(\nabla f)],\label{eq: 1 *}\\
-r\Im\sk{B_{n}\nabla f}{\nabla(f|f|^{r-2})}_{\C^{n}}&=H^{iB_{n}}_{F_{r}}[\cV (f);\cV(\nabla f)],\label{eq: 2 *}
\end{align}
where the two identities above have to be understood to hold everywhere in $\R^{n}$ if $r\geq 2$, and everywhere on $\{f(x)\neq 0\}$ if $1<r<2$.

Fix $z\in\C$, $\xi\in \cH^\C$ and $n\in\N$. 
Then $P_n\xi\equiv(P_n(\Re\xi),P_n(\Im\xi))\in \cH_n\times \cH_n$, which identifies with $\R^n\times\R^n$ through $\cI_n$ and furthermore with $\C^n$ through $\cV$. Choose $f\in C^{1}(\R^{n},\C)$ in a way such that $f(0)=z$ and $\nabla f(0)=P_n\xi$. Combine \eqref{eq: 1 *} and \eqref{eq: 2 *} with the calculations in \cite[Proof of Theorem 1.1]{CFMP2} (see also \cite[Theorem 4.1]{MvN}) and finally pass to the limit as $n\rightarrow \infty$. The outcome is
\begin{align*}
&H^{B}_{F_{r}}[\cV(z);\cV(\xi)]
=r|z|^{r-4}\left(
\Sk{B_{{\bf s}}\Im(\overline{z}\xi)}{\Im(\overline{z}\xi)}
+(r-1)
\Sk{B_{{\bf s}}\Re(\overline{z}\xi)}{\Re(\overline{z}\xi)}_{\cH}\right),\\
&H^{iB}_{F_{r}}[\cV(z);\cV(\xi)]
=-r|z|^{r-4}\Sk{\left[(r-2)B_{{\bf s}}+rB_{{\bf a}}\right]\Im(\overline{z}\xi)}{\Re(\overline{z}\xi)}_{\cH}.
\end{align*}
(The bottom line corrects an insignificant sign misprint occurred in \cite{CFMP2}.) 

Recall that, by assumptions on $B$, the symmetric part $B_{{\bf s}}$ is strictly accretive. Define
$$
x=x(z,\xi)=B^{1/2}_{{\bf s}}\Re(\overline{z}\xi);\quad y=y(z,\xi)=B^{1/2}_{{\bf s}}\Im(\overline{z}\xi).
$$
Then,
\begin{align*}\label{eq: formula meta}
&H^{B}_{F_{r}}[\cV(z);\cV(\xi)]=r|z|^{r-4}\left[\nor y^{2}_{\cH}+(r-1) \nor x^{2}_{\cH}\right],\\
&-H^{iB}_{F_{r}}[\cV(z);\cV(\xi)]=r|z|^{r-4}\Sk{ \left[(r-2)I+rB^{-1/2}_{{\bf s}}B_{{\bf a}}B^{-1/2}_{{\bf s}}\right]y}{x}_{\cH}.
\end{align*}
Therefore, if we set
$$
T=(r-2)I+rB^{-1/2}_{{\bf s}}B_{{\bf a}}B^{-1/2}_{{\bf s}},
$$
we obtain
$$
\left|H^{iB}_{F_{r}}[\cV(z);\cV(\xi)]\right|\leq r|z|^{r-4}\nor T\cdot \norm{y}{\cH}\cdot  \norm{x}{\cH}\leq\frac{\|T\|}{2\sqrt{r-1}}H^{B}_{F_{r}}[\cV(z);\cV(\xi)].
$$
It remains to estimate the norm of $T$ as an operator acting on the Hilbert space $\cH^\C$. Since $B^{-1/2}_{{\bf s}}B_{{\bf a}}B^{-1/2}_{{\bf s}}$ is antisymmetric, by using \eqref{eq: theta and norm} we deduce that
$$
\|T\|^{2}=(r-2)^{2}+\Nor{B^{-1/2}_{{\bf s}}B_{{\bf a}}B^{-1/2}_{{\bf s}}}^{2}=(r-2)^{2}+(\tan\theta^{*}_{2})^{2},
$$
so the proposition follows for $B$. The fact that it also holds for $B^{*}$ follows, by repeating the proof above, from the fact that by \eqref{eq: N2 bis} we have $\theta^{*}_{2}(B^{*})=\theta^{*}_{2}(B)$.
\end{proof}

For every $r>1$ and $\theta\geq 0$ set 
$$
\Delta(r,\theta)=\frac{\sin(\theta_r-\theta)}{\sin\theta_r}.
$$
Recall that $\theta_{q}=\theta_{p}$, so that $\Delta(q,\theta)=\Delta(p,\theta)$.

Until the end of this section it will be convenient to write
$$
S=B_{\bf s}^{1/2}.
$$

\begin{lemma}
\label{l: domination H}
Let $\theta\in[0,\pi/2]$ and $\xi\in \cH\times\cH$. Then for all $s\in\R^{2}$ if $r\geq 2$, or for all $s\in\R^{2}\setminus\{0\}$ if $1<r<2$, we have
$$
H^{e^{\pm i\theta}B}_{F_{r}}[s;\xi]\geq \Delta(r,\theta) H^{B}_{F_{r}}[s;\xi],
$$
and the same estimate holds with $B$ replaced by $B^{*}$.
\end{lemma}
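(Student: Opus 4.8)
The plan is to derive this directly from the $\R$-linearity formula \eqref{eq: R-linear} together with the pointwise domination established in Proposition~\ref{p: convexity of Fr}. First I apply \eqref{eq: R-linear} with $R=B$ and $\Psi=F_{r}$, which gives
$$
H^{e^{\pm i\theta}B}_{F_{r}}[s;\xi]=\cos\theta\cdot H^{B}_{F_{r}}[s;\xi]\pm\sin\theta\cdot H^{iB}_{F_{r}}[s;\xi].
$$
Next I record that $H^{B}_{F_{r}}[s;\xi]\geq 0$; this is immediate from the explicit expression for $H^{B}_{F_{r}}$ obtained in the proof of Proposition~\ref{p: convexity of Fr}, namely $H^{B}_{F_{r}}[\cV(z);\cV(\xi)]=r|z|^{r-4}\bigl(\|y\|_{\cH}^{2}+(r-1)\|x\|_{\cH}^{2}\bigr)$ with $x=B_{{\bf s}}^{1/2}\Re(\overline z\xi)$ and $y=B_{{\bf s}}^{1/2}\Im(\overline z\xi)$, together with $r>1$ (alternatively one sees it from \eqref{eq: adjoint} and \eqref{eq: tensor} and convexity of $F_{r}$ for $r\geq 1$). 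Since $\theta\in[0,\pi/2]$ we have $\sin\theta\geq 0$, hence $\pm\sin\theta\cdot H^{iB}_{F_{r}}[s;\xi]\geq-\sin\theta\cdot\bigl|H^{iB}_{F_{r}}[s;\xi]\bigr|$, and applying the bound $\bigl|H^{iB}_{F_{r}}[s;\xi]\bigr|\leq\cot\theta_{r}\cdot H^{B}_{F_{r}}[s;\xi]$ from Proposition~\ref{p: convexity of Fr} (valid for all $s\in\R^{2}$ if $r\geq 2$, and for $s\neq 0$ if $1<r<2$) once more using $H^{B}_{F_{r}}[s;\xi]\geq 0$, I obtain
$$
H^{e^{\pm i\theta}B}_{F_{r}}[s;\xi]\geq\bigl(\cos\theta-\sin\theta\cot\theta_{r}\bigr)\,H^{B}_{F_{r}}[s;\xi].
$$

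It then remains to identify the coefficient. Since $\theta_{r}=\pi/2-\theta_{r}^{*}\in(0,\pi/2]$ we have $\sin\theta_{r}>0$, so
$$
\cos\theta-\sin\theta\cot\theta_{r}=\frac{\cos\theta\sin\theta_{r}-\sin\theta\cos\theta_{r}}{\sin\theta_{r}}=\frac{\sin(\theta_{r}-\theta)}{\sin\theta_{r}}=\Delta(r,\theta),
$$
which is precisely the claimed inequality. The assertion with $B$ replaced by $B^{*}$ follows verbatim: Proposition~\ref{p: convexity of Fr} also holds for $B^{*}$, and since $(B^{*})_{{\bf s}}=B_{{\bf s}}$ the same nonnegativity $H^{B^{*}}_{F_{r}}[s;\xi]\geq 0$ is available, so the identical chain of estimates applies.

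I expect no genuine obstacle here; the lemma is a short consequence of \eqref{eq: R-linear} and Proposition~\ref{p: convexity of Fr}. The only points requiring care are the sign bookkeeping (the nonnegativity of $H^{B}_{F_{r}}$ together with $\sin\theta\geq 0$ is exactly what allows the domination of Proposition~\ref{p: convexity of Fr} to survive the passage to absolute values) and the strict positivity of $\sin\theta_{r}$, which legitimizes the trigonometric rearrangement into $\Delta(r,\theta)$. Note that the displayed chain of inequalities is valid irrespective of the sign of $\Delta(r,\theta)$, so the statement remains correct (and nonvacuous in the relevant range $0\leq\theta<\theta_{p}$) even though $\Delta(r,\theta)$ would become negative for $\theta>\theta_{r}$.
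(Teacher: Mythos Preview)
Your proof is correct and follows essentially the same approach as the paper: you combine \eqref{eq: R-linear} with $R=B$ (or $B^{*}$) and $\Psi=F_{r}$ together with Proposition~\ref{p: convexity of Fr}, then identify the resulting coefficient $\cos\theta-\sin\theta\cot\theta_{r}$ as $\Delta(r,\theta)$. The paper's proof merely states that the lemma follows from these two ingredients without writing out the trigonometric simplification, so your version is in fact a more detailed rendering of the same argument.
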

\begin{proof}
The lemma rapidly follows by combining \eqref{eq: R-linear}, applied with $R=B$ or $R=B^{*}$ and $\Psi=F_{r}$, with Proposition~\ref{p: convexity of Fr}.
\end{proof}

\begin{lemma}
\label{l: H Fr}
Let $\xi=(\xi_{1},\xi_{2})\in \cH\times\cH$. Then for all $s\in\R^{2}$ if $r\geq 2$, or for all $s\in\R^{2}\setminus\{0\}$ if $1<r<2$, we have
\begin{align*}
 H_{F_r}^{B}[s;\xi]\geq\min\{1,r-1\} r|s|^{r-2}\nor{S\tilde\xi}^2
   \end{align*}
 and the same estimate holds with $B$ replaced by $B^{*}$.
\end{lemma}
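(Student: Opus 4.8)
The plan is to read the statement off from the explicit formula for $H^{B}_{F_r}$ that was derived in the course of proving Proposition~\ref{p: convexity of Fr}. Recall that there, for $z\in\C$ and $\xi\in\cH^\C$, with $x=x(z,\xi)=S\,\Re(\overline z\,\xi)$ and $y=y(z,\xi)=S\,\Im(\overline z\,\xi)$, one obtained the identity
$$
H^{B}_{F_r}[\cV(z);\cV(\xi)]=r|z|^{r-4}\bigl(\nor{y}^2_\cH+(r-1)\nor{x}^2_\cH\bigr),
$$
valid everywhere on $\R^2$ when $r\geq 2$ and on $\R^2\setminus\{0\}$ when $1<r<2$ (the restriction being forced by the factor $|z|^{r-4}$). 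Given $s\in\R^2$ and $\xi=(\xi_1,\xi_2)\in\cH^2$ as in the statement, I would apply this with $z\in\C$ such that $\cV(z)=s$, so that $|z|=|s|$, and with the $\cH^\C$-vector taken to be $\tilde\xi$, so that $\cV(\tilde\xi)=\xi$. Everything then reduces to two elementary observations.

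First, since $\min\{1,r-1\}\leq 1$ and $\min\{1,r-1\}\leq r-1$, one has $\nor{y}^2_\cH+(r-1)\nor{x}^2_\cH\geq\min\{1,r-1\}\bigl(\nor{x}^2_\cH+\nor{y}^2_\cH\bigr)$. Second, decomposing $\overline z\,\tilde\xi\in\cH^\C$ into its real and imaginary parts and using that the complexification of the positive self-adjoint operator $S=B_{\bf s}^{1/2}$ on $\cH$ acts componentwise on real and imaginary parts,
$$
\nor{x}^2_\cH+\nor{y}^2_\cH=\Nor{S\,\Re(\overline z\,\tilde\xi)}^2_\cH+\Nor{S\,\Im(\overline z\,\tilde\xi)}^2_\cH=\Nor{S\,\overline z\,\tilde\xi}^2_{\cH^\C}=|z|^2\Nor{S\tilde\xi}^2_{\cH^\C}=|s|^2\Nor{S\tilde\xi}^2_{\cH^\C}.
$$
Combining the two displays yields
$$
H^{B}_{F_r}[s;\xi]\geq r|s|^{r-4}\,\min\{1,r-1\}\,|s|^2\,\Nor{S\tilde\xi}^2_{\cH^\C}=\min\{1,r-1\}\,r\,|s|^{r-2}\Nor{S\tilde\xi}^2_{\cH^\C},
$$
which is the claimed inequality.

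For the assertion with $B$ replaced by $B^{*}$ I would argue identically, starting from the companion formula for $H^{B^{*}}_{F_r}$ recorded in the proof of Proposition~\ref{p: convexity of Fr} and using the identity $(B^{*})_{\bf s}=B_{\bf s}$, which guarantees that the same operator $S=B_{\bf s}^{1/2}$ appears on the right-hand side. I do not expect any genuine obstacle here: the only points that need a little care are keeping track of the identification $\cV$ between $\C$ (resp.\ $\cH^\C$) and $\R^2$ (resp.\ $\cH^2$) and of the componentwise action of the complexified $S$, and — in the range $1<r<2$ — remembering to exclude $s=0$, where both sides are singular.
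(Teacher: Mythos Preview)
Your argument is correct. It is, however, a different route from the paper's own proof. The paper does not reuse the explicit formula from the proof of Proposition~\ref{p: convexity of Fr}; instead it establishes the matrix inequality
\[
{\rm Hess}(F_r;s)\geq \min\{1,r-1\}\,r\,|s|^{r-2}I_{\R^2}
\]
(by quoting an external lemma that gives the trace and determinant of the Hessian, hence its eigenvalues $r|s|^{r-2}$ and $r(r-1)|s|^{r-2}$), and then feeds this bound into the abstract identities \eqref{eq: adjoint} and \eqref{eq: tensor} with $R=B$ or $R=B^*$. Your approach short-circuits this by reading the same eigenvalues off the already-diagonalised expression $r|z|^{r-4}(\|y\|^2+(r-1)\|x\|^2)$, and then collapsing $\|x\|^2+\|y\|^2$ via the unitary rotation $\tilde\xi\mapsto\bar z\tilde\xi$. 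The two arguments are therefore equivalent at the level of content; yours is more economical because it recycles a computation already carried out, while the paper's version is more modular in that it isolates the Hessian bound as a standalone quadratic-form inequality on $\R^2$.

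One small point of care: the displayed formula you invoke is literally indeterminate at $z=0$ when $2\leq r<4$ (the factor $|z|^{r-4}$ blows up while $x=y=0$), so strictly speaking the case $s=0$, $r\geq 2$ should be handled by continuity or by the direct observation that both sides vanish (for $r>2$) or equal $2\|S\tilde\xi\|^2$ (for $r=2$). This is a triviality, not a gap.
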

\begin{proof}
By \cite[Lemma~19]{CD-mult} applied with $\phi=0$, we have
$$
{\rm Hess}(F_{r};s)=\frac{r^2}2|s|^{r-2}\cD_{r,0}(s),
$$
where $\cD_{r,0}(s)$ is a real symmetric matrix with
$$
 \det \cD_{r,0}(s)=\left(\frac{r}{2}\right)^{2}\left(1-\mod{1-\frac{2}{r}}^{2}\right)
 \quad {\rm and}\quad 
 {\rm tr\,}\cD_{r,0}(s)=r.
$$
Consequently, in the sense of quadratic forms on $\R^{2}$,
\begin{equation}
\label{eq: Hess Fr}
{\rm Hess}(F_{r};s)\geq \min\{1,r-1\}r|s|^{r-2}I_{\R^{2}}\,.
\end{equation}
The lemma now follows by combining  \eqref{eq: Hess Fr} with \eqref{eq: adjoint} and \eqref{eq: tensor} applied with $\Psi=F_{r}$ and $R=B$ or $R=B^{*}$.
\end{proof}

\begin{corollary}
\label{c: H Fr}
Suppose that $1<r<\infty$, $0\leq \theta\leq\theta_{r}$
and $\xi=(\xi_{1},\xi_{2})\in \cH\times \cH$. 
Let 
$$
C\in\left\{e^{i\theta}B, e^{-i\theta}B, e^{i\theta}B^{*},e^{-i\theta}B^{*}\right\}.
$$
Then, for all $s\in\R^{2}$ if $r\geq 2$, or for all $s\in\R^{2}\setminus\{0\}$ if $1<r<2$, we have
$$
H^{C}_{F_{r}}[s;\xi]\geq 
\min\{1,r-1\}  r\Delta(r,\theta)|s|^{r-2}\nor{S\tilde\xi}^2.
$$
\end{corollary}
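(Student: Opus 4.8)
The plan is to obtain the bound by simply composing the two preceding lemmas, so the argument will be short. First I would use the hypothesis on $C$: writing $R=B$ when $C\in\{e^{i\theta}B,e^{-i\theta}B\}$ and $R=B^{*}$ when $C\in\{e^{i\theta}B^{*},e^{-i\theta}B^{*}\}$, Lemma~\ref{l: domination H} (applied with this $R$, which is legitimate since $0\leq\theta\leq\theta_{r}\leq\pi/2$) gives
$$
H^{C}_{F_{r}}[s;\xi]\geq \Delta(r,\theta)\,H^{R}_{F_{r}}[s;\xi]
$$
for all $s\in\R^{2}$ if $r\geq2$, and for all $s\in\R^{2}\setminus\{0\}$ if $1<r<2$ --- exactly the range of $s$ in the statement.

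Next I would check that $\Delta(r,\theta)\geq0$, so that the previous inequality may be freely combined with a further lower bound. Since $\theta_{r}=\pi/2-\theta^{*}_{r}$ with $\theta^{*}_{r}\in[0,\pi/2)$, we have $\theta_{r}\in(0,\pi/2]$, whence $\sin\theta_{r}>0$; and the hypothesis $0\leq\theta\leq\theta_{r}$ forces $0\leq\theta_{r}-\theta\leq\theta_{r}\leq\pi/2$, so $\sin(\theta_{r}-\theta)\geq0$. Hence $\Delta(r,\theta)=\sin(\theta_{r}-\theta)/\sin\theta_{r}\geq0$.

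Finally I would invoke Lemma~\ref{l: H Fr} (with $B$ replaced by $B^{*}$ in the case $R=B^{*}$), which on the same set of admissible $s$ yields
$$
H^{R}_{F_{r}}[s;\xi]\geq \min\{1,r-1\}\,r\,|s|^{r-2}\nor{S\tilde\xi}^{2}.
$$
Multiplying this by the nonnegative constant $\Delta(r,\theta)$ and inserting it into the first display produces precisely
$$
H^{C}_{F_{r}}[s;\xi]\geq \min\{1,r-1\}\,r\,\Delta(r,\theta)\,|s|^{r-2}\nor{S\tilde\xi}^{2},
$$
which is the assertion. I do not anticipate any genuine obstacle here: the only things requiring care are that the restriction $s\neq0$ for $1<r<2$ in the two lemmas matches the one in the corollary (it does), and the elementary verification of the sign of $\Delta(r,\theta)$ just carried out.
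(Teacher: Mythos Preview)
Your argument is correct and matches the paper's own proof, which simply says the corollary ``rapidly follows by combining Lemma~\ref{l: domination H} with Lemma~\ref{l: H Fr}.'' Your added verification that $\Delta(r,\theta)\geq 0$ is a useful detail the paper leaves implicit.
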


\begin{proof}
The corollary rapidly follows by combining Lemma~\ref{l: domination H} with Lemma~\ref{l: H Fr}.
\end{proof}

\begin{lemma}
\label{l: H F2-r}
Suppose that $1<r<2$ and $D\in \cB(\cH^\C)$. Then, for every $s\in \R^{2}\setminus\{0\}$ and  $\xi=(\xi_{1},\xi_{2})\in \cH^{2}\times\cH^{2}$,
$$
 H_{F_{2-r}}^{D}[s;\xi]
 = -2(r-1)|s|^{-r}\Re\langle D\tilde{\xi},\tilde{\xi}\rangle_{\cH^\C}+|s|^{2-2r}H^{D}_{F_{r}}[s;\xi]. 
$$
\end{lemma}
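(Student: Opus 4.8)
The plan is to prove the identity by a direct computation based on the explicit formula for ${\rm Hess}(F_r;s)$, treating $\Re\langle D\tilde\xi,\tilde\xi\rangle$ as the contribution of the ``constant'' Hessian $2I_{\R^2}$ of $F_2$ and relating the Hessians of $F_{2-r}$ and $F_r$ directly. First I would recall from \cite[Lemma~19]{CD-mult} (already invoked in the proof of Lemma~\ref{l: H Fr}) the precise form of ${\rm Hess}(F_\rho;s)$ for a general power $\rho>0$ and $s\neq 0$: writing $s=(s_1,s_2)$, one has
$$
{\rm Hess}(F_\rho;s)=\rho|s|^{\rho-2}\left(I_{\R^2}+(\rho-2)|s|^{-2}\, s\otimes s\right),
$$
where $s\otimes s$ denotes the rank-one matrix $(s_is_j)_{i,j}$. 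The key algebraic observation is that this expression depends on $\rho$ only through the scalar prefactor $\rho|s|^{\rho-2}$ and through the coefficient $(\rho-2)$ in front of the rank-one part; in particular, for $\rho=2-r$ and $\rho=r$ one gets
$$
{\rm Hess}(F_{2-r};s)=(2-r)|s|^{-r}I_{\R^2}-r(2-r)|s|^{-r-2}\,s\otimes s,
$$
$$
{\rm Hess}(F_{r};s)=r|s|^{r-2}I_{\R^2}+r(r-2)|s|^{r-4}\,s\otimes s.
$$
Comparing the rank-one parts, one sees that ${\rm Hess}(F_{2-r};s)$ differs from $|s|^{2-2r}{\rm Hess}(F_r;s)$ only in its multiple of the identity: the rank-one terms of $|s|^{2-2r}{\rm Hess}(F_r;s)$ are $r(r-2)|s|^{-r-2}\,s\otimes s = -r(2-r)|s|^{-r-2}\,s\otimes s$, which matches exactly, while the identity parts are $(2-r)|s|^{-r}I$ versus $r|s|^{-r}I$. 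Hence
$$
{\rm Hess}(F_{2-r};s)=|s|^{2-2r}{\rm Hess}(F_r;s)+\big((2-r)-r\big)|s|^{-r}I_{\R^2}=|s|^{2-2r}{\rm Hess}(F_r;s)-2(r-1)|s|^{-r}I_{\R^2}.
$$

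Next I would feed this pointwise matrix identity into the definition of $H^{D}_{\Psi}[s;\xi]$ from Notation~\ref{d: H^{M}}, namely $H^{D}_{F_\rho}[s;\xi]=\sk{({\rm Hess}(F_\rho;s)\otimes I_{\cH})\xi}{\cM(D)\xi}_{\cH^2}$, which is $\R$-linear in ${\rm Hess}(F_\rho;s)$. The term $|s|^{2-2r}{\rm Hess}(F_r;s)$ contributes $|s|^{2-2r}H^{D}_{F_r}[s;\xi]$, so it only remains to evaluate the contribution of the multiple of the identity, i.e. $H^{D}_{I_{\R^2}}[s;\xi]$. By definition this equals $\sk{(I_{\R^2}\otimes I_{\cH})\xi}{\cM(D)\xi}_{\cH^2}=\sk{\xi}{\cM(D)\xi}_{\cH^2}=\sk{\cM(D)\xi}{\xi}_{\cH\times\cH}$ (using symmetry of the real inner product), which by the key identity \eqref{eq: sesqui*} equals $\Re\sk{D\tilde\xi}{\tilde\xi}_{\cH^\C}$. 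Multiplying by the scalar $-2(r-1)|s|^{-r}$ gives exactly the stated first term, and combining with the $H^D_{F_r}$ term completes the identity.

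I expect the only real subtlety — not an obstacle so much as a point requiring care — to be bookkeeping: making sure the formula for ${\rm Hess}(F_\rho;s)$ from \cite[Lemma~19]{CD-mult} is transcribed with the correct powers of $|s|$ and the correct coefficient $(\rho-2)$ on the rank-one part (the proof of Lemma~\ref{l: H Fr} only uses the determinant and trace of the normalized matrix $\cD_{r,0}$, so I would re-derive the explicit $I + (\rho-2)|s|^{-2}s\otimes s$ shape from scratch, either by direct differentiation of $|s|^\rho$ or by matching trace $\rho|s|^{\rho-2}\cdot 2 + \rho(\rho-2)|s|^{\rho-2}=\rho^2|s|^{\rho-2}$ and determinant against the stated data). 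A minor point is that everything is restricted to $s\neq 0$, which is exactly the hypothesis $s\in\R^2\setminus\{0\}$ and which is needed both because $F_{2-r}$ and $F_r$ (for $1<r<2$) are only twice differentiable away from the origin and because negative powers of $|s|$ appear; no regularisation or limiting argument is required since the statement is purely pointwise. Thus the proof is essentially the two-line linear-algebra computation above together with one application each of Notation~\ref{d: H^{M}} and identity \eqref{eq: sesqui*}.
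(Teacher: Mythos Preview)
Your proposal is correct and takes essentially the same approach as the paper: both rest on the pointwise matrix identity ${\rm Hess}(F_{2-r};s)=-2(r-1)|s|^{-r}I_{\R^2}+|s|^{2-2r}{\rm Hess}(F_r;s)$, followed by an application of Notation~\ref{d: H^{M}} and \eqref{eq: sesqui*}. The only cosmetic difference is that the paper quotes this Hessian identity directly from \cite[eq.~(28)]{CD-mult}, whereas you re-derive it from the explicit formula ${\rm Hess}(F_\rho;s)=\rho|s|^{\rho-2}(I_{\R^2}+(\rho-2)|s|^{-2}s\otimes s)$.
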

\begin{proof}
An easy computation (see \cite[eq. $(28)$]{CD-mult}) gives,
$$
{\rm Hess }(F_{2-r};s)=-2(r-1)|s|^{-r}I_{\R^{2}}+|s|^{2-2r}{\rm Hess}(F_{r};s).
$$
Therefore,
$$
H^{D}_{F_{2-r}}[s;\xi]=-2(r-1)|s|^{-r}\sk{\cM(D)\xi}{\xi}_{\cH^2}
+|s|^{2-2r}H^{D}_{F_{r}}[s;\xi]
$$
and the lemma now follows from \eqref{eq: sesqui*}.
\end{proof}

\begin{corollary}
\label{c: H F2-r}
 Suppose that $1<r<2$ and $0\leq\theta\leq \theta_{r}$. Let 
 $$
 C\in\left\{e^{i\theta}B, e^{-i\theta}B, e^{i\theta}B^{*},e^{-i\theta}B^{*}\right\}.
 $$
Then for every $s\in\R^{2}\setminus\{0\}$ and $\xi\in \cH\times \cH$,
$$
 H_{F_{2-r}}^{C}[s;\xi]\geq -2(r-1)\left(1+\tan\theta^{*}_{2}\right) |s|^{-r}\nor{S\tilde\xi}^2.
 $$ 
\end{corollary}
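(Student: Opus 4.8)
The plan is to obtain the estimate directly from Lemma~\ref{l: H F2-r} applied with $D=C$: one of the two summands on the right-hand side there is nonnegative (by Corollary~\ref{c: H Fr}) and can be discarded, while the other is controlled by the numerical-range hypothesis on $B$.

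Concretely, I would first invoke Lemma~\ref{l: H F2-r} with $D=C$, obtaining for every $s\in\R^{2}\setminus\{0\}$
$$
H_{F_{2-r}}^{C}[s;\xi]=-2(r-1)|s|^{-r}\Re\sk{C\tilde\xi}{\tilde\xi}_{\cH^\C}+|s|^{2-2r}H^{C}_{F_{r}}[s;\xi].
$$
Since $1<r<2$ and $0\leq\theta\leq\theta_{r}$, Corollary~\ref{c: H Fr} (in its $1<r<2$ range, valid for $s\neq0$) gives $H^{C}_{F_{r}}[s;\xi]\geq\min\{1,r-1\}\,r\,\Delta(r,\theta)|s|^{r-2}\nor{S\tilde\xi}^{2}\geq0$, because $\Delta(r,\theta)=\sin(\theta_{r}-\theta)/\sin\theta_{r}\geq0$ on $0\leq\theta\leq\theta_{r}$. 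Hence the second summand is $\geq0$ and may be dropped, leaving
$$
H_{F_{2-r}}^{C}[s;\xi]\geq-2(r-1)|s|^{-r}\Re\sk{C\tilde\xi}{\tilde\xi}_{\cH^\C}.
$$
Because the coefficient $-2(r-1)|s|^{-r}$ is negative, it remains to bound $\Re\sk{C\tilde\xi}{\tilde\xi}_{\cH^\C}$ from above.

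For this step, writing $C=e^{\pm i\theta}B$ or $C=e^{\pm i\theta}B^{*}$ and using $\Re(e^{\pm i\theta}w)=\cos\theta\,\Re w\mp\sin\theta\,\Im w$, I would estimate, via \eqref{eq: sim and antisim}, $\Re\sk{B\tilde\xi}{\tilde\xi}_{\cH^\C}=\sk{B_{\bf s}\tilde\xi}{\tilde\xi}_{\cH^\C}=\nor{S\tilde\xi}^{2}$, and, via \eqref{eq: N2 bis}, $|\Im\sk{B\tilde\xi}{\tilde\xi}_{\cH^\C}|\leq\tan\theta^{*}_{2}\nor{S\tilde\xi}^{2}$; the same two bounds hold with $B$ replaced by $B^{*}$ since $(B^{*})_{\bf s}=B_{\bf s}$ and $\theta^{*}_{2}(B^{*})=\theta^{*}_{2}(B)$. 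Combining these with $\cos\theta\leq1$ and $\sin\theta\leq1$ gives $\Re\sk{C\tilde\xi}{\tilde\xi}_{\cH^\C}\leq(\cos\theta+\sin\theta\tan\theta^{*}_{2})\nor{S\tilde\xi}^{2}\leq(1+\tan\theta^{*}_{2})\nor{S\tilde\xi}^{2}$, and substituting into the previous display proves the corollary.

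There is no genuine obstacle here; the statement is essentially bookkeeping on top of Lemma~\ref{l: H F2-r} and Corollary~\ref{c: H Fr}. The only point to watch is the sign: since $r-1>0$ one needs an \emph{upper} bound on $\Re\sk{C\tilde\xi}{\tilde\xi}_{\cH^\C}$ rather than a lower one, and one must make sure the discarded term $|s|^{2-2r}H^{C}_{F_{r}}[s;\xi]$ is really nonnegative — which is exactly what the hypothesis $0\leq\theta\leq\theta_{r}$ secures through Corollary~\ref{c: H Fr}.
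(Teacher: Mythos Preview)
Your proof is correct and follows essentially the same route as the paper: apply Lemma~\ref{l: H F2-r} with $D=C$, discard the nonnegative term $|s|^{2-2r}H^{C}_{F_{r}}[s;\xi]$ via Corollary~\ref{c: H Fr}, and bound $\Re\sk{C\tilde\xi}{\tilde\xi}_{\cH^\C}$ from above using \eqref{eq: N2 bis} and \eqref{eq: sim and antisim}. The paper's version of the last step is marginally more compact---it writes $\Re\sk{C\tilde\xi}{\tilde\xi}_{\cH^\C}\leq|\sk{B\tilde\xi}{\tilde\xi}_{\cH^\C}|\leq(1+\tan\theta^{*}_{2})\nor{S\tilde\xi}^{2}$ directly---but your expanded computation via $\cos\theta\,\Re\mp\sin\theta\,\Im$ is equivalent.
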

\begin{proof}
By Corollary~\ref{c: H Fr}, $ H_{F_{r}}^{C}[s;\xi]\geq0$. Therefore, by Lemma~\ref{l: H F2-r} applied with $D=C$, 
$$
 H_{F_{2-r}}^{C}[s;\xi]\geq -2(r-1)|s|^{-r}\Re\langle C\tilde{\xi},\tilde{\xi}\rangle_{\cH^\C}\,.
$$
Now \eqref{eq: N2 bis} and the first identity in \eqref{eq: sim and antisim} imply that

\[
\Re\sk{C\tilde{\xi}}{\tilde{\xi}}_{\cH^\C}\leq  |\sk{B\tilde{\xi}}{\tilde{\xi}}_{\cH^\C}|\leq(1+\tan\theta^{*}_{2})\nor{S\tilde\xi}^2.
\qedhere
\]

\end{proof}

\begin{lemma}
\label{l: F2tensorF2-r}
Suppose that $1<r<2$. Let $D,E\in\cB(\cH^\C)$. Then for every $v=(\zeta,\eta)\in\R^{2}\times(\R^{2}\setminus\{0\})$ and for all $\omega=(\alpha,\beta)\in \cH^{2}\times \cH^{2}$, we have
\begin{align}\label{eq: F2tensorF2-r}
H^{(D,E)}_{F_{2}\otimes F_{2-r}}[v;\omega]&=F_{2-r}(\eta)H^{D}_{F_{2}}[\zeta;\alpha]+F_{2}(\zeta)H^{E}_{F_{2-r}}[\eta;\beta]\\
&\hskip 30pt+2(2-r)|\eta|^{-r}\langle(\eta\cdot\beta)\zeta,\cM(D)\alpha\rangle_{\cH^2}\nonumber\\
&\hskip 30pt+2(2-r)|\eta|^{-r}\langle(\zeta\cdot\alpha)\eta,\cM(E)\beta\rangle_{\cH^2},\nonumber
\end{align}
where
$$
\eta\cdot\beta=\eta_{1}\beta_{1} +\eta_{2}\beta_{2}\in \cH
\hskip 30pt {\it and }\hskip 30pt (\eta\cdot\beta)\zeta=\big((\eta\cdot\beta)\zeta_{1},(\eta\cdot\beta)\zeta_{2}\big)\in \cH^2.
$$
\end{lemma}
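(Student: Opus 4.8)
The identity \eqref{eq: F2tensorF2-r} is a direct differentiation exercise, so I will only sketch the bookkeeping. Recall that $F_{2}(s)=|s|^{2}$ and $F_{2-r}(s)=|s|^{2-r}$, so the function $\Phi:=F_{2}\otimes F_{2-r}$ is $\Phi(\zeta,\eta)=|\zeta|^{2}|\eta|^{2-r}$, which is of class $C^{2}$ on $\R^{2}\times(\R^{2}\setminus\{0\})$; this is why $\eta\neq 0$ is assumed. First I would record the partial gradients
$$
\nabla F_{2}(\zeta)=2\zeta,\qquad \nabla F_{2-r}(\eta)=(2-r)|\eta|^{-r}\eta,
$$
and then, since $\Phi$ is a product of a function of $\zeta$ and a function of $\eta$, write its Hessian in $2\times 2$ block form,
$$
{\rm Hess}(\Phi;v)=
\begin{pmatrix}
F_{2-r}(\eta)\,{\rm Hess}(F_{2};\zeta) & 2(2-r)|\eta|^{-r}\,\zeta\eta^{T}\\
2(2-r)|\eta|^{-r}\,\eta\zeta^{T} & F_{2}(\zeta)\,{\rm Hess}(F_{2-r};\eta)
\end{pmatrix},
$$
the off-diagonal blocks being the rank-one matrices $(\nabla F_{2}(\zeta))(\nabla F_{2-r}(\eta))^{T}$ and its transpose; the scalar $2(2-r)|\eta|^{-r}$ is just the product of the $2$ from $\nabla F_{2}$ and the $(2-r)|\eta|^{-r}$ from $\nabla F_{2-r}$.

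Next I would tensor this $4\times 4$ real symmetric matrix with $I_{\cH}$ and pair it, as in Notation~\ref{d: H^{M}}, against $[\cM(D)\oplus\cM(E)]\omega=(\cM(D)\alpha,\cM(E)\beta)$, where $\omega=(\alpha,\beta)$, $\alpha=(\alpha_{1},\alpha_{2})$, $\beta=(\beta_{1},\beta_{2})$. Because $\cM(D)\oplus\cM(E)$ is block diagonal, the pairing splits into exactly four summands, one for each block of ${\rm Hess}(\Phi;v)$. From the very definition of $H^{D}_{\Psi}$, the two diagonal blocks contribute $F_{2-r}(\eta)\,H^{D}_{F_{2}}[\zeta;\alpha]$ and $F_{2}(\zeta)\,H^{E}_{F_{2-r}}[\eta;\beta]$, which are the first two terms on the right-hand side of \eqref{eq: F2tensorF2-r}.

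It remains to treat the two off-diagonal blocks. In coordinates $\bigl((\zeta\eta^{T})\otimes I_{\cH}\bigr)\beta$ has $i$-th component $\zeta_{i}\sum_{j}\eta_{j}\beta_{j}=(\eta\cdot\beta)\zeta_{i}$, hence equals $(\eta\cdot\beta)\zeta\in\cH^{2}$ in the notation of the statement; pairing $2(2-r)|\eta|^{-r}(\eta\cdot\beta)\zeta$ against $\cM(D)\alpha$ gives the third term. Symmetrically $\bigl((\eta\zeta^{T})\otimes I_{\cH}\bigr)\alpha=(\zeta\cdot\alpha)\eta$, and pairing $2(2-r)|\eta|^{-r}(\zeta\cdot\alpha)\eta$ against $\cM(E)\beta$ gives the fourth term. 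Summing the four contributions yields \eqref{eq: F2tensorF2-r}. There is no genuine obstacle here: the only points needing care are keeping the tensor factor $I_{\cH}$ in place throughout, and respecting the non-symmetry of $\cM(D)\oplus\cM(E)$, so that the block $\zeta\eta^{T}$ is paired against $\cM(D)\alpha$ while $\eta\zeta^{T}$ is paired against $\cM(E)\beta$, with no spurious symmetrisation and no factor of $2$ beyond the one already coming from $\nabla F_{2}=2\zeta$.
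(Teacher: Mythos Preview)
Your argument is correct and is essentially the same as the paper's: both proofs amount to writing out the block structure of ${\rm Hess}(F_{2}\otimes F_{2-r};v)$ and reading off the four terms from the definition of $H^{(D,E)}_{\Phi}$, the only substantive computation being the mixed partials $\partial^{2}_{\zeta_{j}\eta_{k}}\Phi=2(2-r)|\eta|^{-r}\zeta_{j}\eta_{k}$. You have simply made explicit the bookkeeping that the paper leaves to the reader.
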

\begin{proof}
The lemma follows from the definition of $H^{(D,E)}_{F_{2}\otimes F_{2-r}}[v;\omega]$, and the identity
\[
\pd^2_{\zeta_j\eta_k}(F_{2}\otimes F_{2-r})(\zeta,\eta)=2(2-q)\zeta_j\eta_k|\eta|^{-r}, 
\hskip 30pt \text{for } j,k=1,2.
\qedhere
\]
\end{proof}

\begin{corollary}
\label{c: F2tensorF2-r}
Suppose that $1<r<2$ and that $0\leq\theta\leq\theta_{r}$. Let 
$$
C\in\left\{e^{i\theta}B, e^{-i\theta}B, e^{i\theta}B^{*},e^{-i\theta}B^{*}\right\}.
$$
Then, for every $v=(\zeta,\eta)\in\R^{2}\times\R^{2}$ with $|\zeta|^{r/(r-1)}<|\eta|^{r}$, and $\omega=(\alpha,\beta)\in \cH^2\times \cH^2$,
$$
\aligned
H^{(C,C^{*})}_{F_{2}\otimes F_{2-r}}[v;\omega]\geq 2\Delta(2,\theta)|\eta|^{2-r}\Nor{S\widetilde\alpha}_{\cH^\C}^{2}
&-2(r-1)(1+\tan\theta^{*}_{2}) |\eta|^{r-2}\nor{S\tilde\beta}_{\cH^\C}^{2}\\
&-4(2-r)(1+\tan\theta^{*}_{2})\nor{S\tilde\alpha}_{\cH^\C}\nor{S\tilde\beta}_{\cH^\C}.
\endaligned
$$
\end{corollary}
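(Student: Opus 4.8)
The plan is to expand $H^{(C,C^{*})}_{F_{2}\otimes F_{2-r}}[v;\omega]$ via Lemma~\ref{l: F2tensorF2-r}, to lower-bound the two ``diagonal'' Hessian terms by Corollaries~\ref{c: H Fr} and~\ref{c: H F2-r}, and to treat the two cross terms by passing to the complexification $\cH^{\C}$ and using a numerical-range estimate for $B$. Two scalar consequences of the hypothesis will carry all the weight: since $|\zeta|^{r/(r-1)}<|\eta|^{r}$ is equivalent to $|\zeta|<|\eta|^{r-1}$ (in particular $\eta\neq 0$), one gets $|\zeta|^{2}|\eta|^{-r}<|\eta|^{r-2}$ and $|\zeta|\,|\eta|^{1-r}<1$.

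By Lemma~\ref{l: F2tensorF2-r} applied with $D=C$ and $E=C^{*}$,
$$
H^{(C,C^{*})}_{F_{2}\otimes F_{2-r}}[v;\omega]
=|\eta|^{2-r}H^{C}_{F_{2}}[\zeta;\alpha]+|\zeta|^{2}H^{C^{*}}_{F_{2-r}}[\eta;\beta]
+2(2-r)|\eta|^{-r}\bigl(P_{1}+P_{2}\bigr),
$$
where $P_{1}=\sk{(\eta\cdot\beta)\zeta}{\cM(C)\alpha}_{\cH^{2}}$ and $P_{2}=\sk{(\zeta\cdot\alpha)\eta}{\cM(C^{*})\beta}_{\cH^{2}}$. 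For the first term I apply Corollary~\ref{c: H Fr} with exponent $2$ in place of $r$; this is legitimate because $\theta\leq\theta_{r}\leq\theta_{2}$, which follows from \eqref{eq: alternative} since $\sin\theta_{r}=\sin\phi_{r}\sin\theta_{2}\leq\sin\theta_{2}$ and both angles lie in $[0,\pi/2]$. It gives $|\eta|^{2-r}H^{C}_{F_{2}}[\zeta;\alpha]\geq 2\Delta(2,\theta)|\eta|^{2-r}\nor{S\tilde\alpha}^{2}$. For the second term, Corollary~\ref{c: H F2-r} applied with $C^{*}$ in place of $C$ (it again belongs to the admissible set) yields $H^{C^{*}}_{F_{2-r}}[\eta;\beta]\geq-2(r-1)(1+\tan\theta^{*}_{2})|\eta|^{-r}\nor{S\tilde\beta}^{2}$; multiplying by $|\zeta|^{2}\geq 0$ and using $|\zeta|^{2}|\eta|^{-r}<|\eta|^{r-2}$ — the inequality reversing since the coefficient is nonpositive — we obtain $|\zeta|^{2}H^{C^{*}}_{F_{2-r}}[\eta;\beta]\geq-2(r-1)(1+\tan\theta^{*}_{2})|\eta|^{r-2}\nor{S\tilde\beta}^{2}$.

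For the cross terms, I use \eqref{eq: sesqui*} together with the canonical identifications of $\R^{2}$ with $\C$ and of $\cH^{2}$ with $\cH^{\C}$ to rewrite $P_{1}=\Re\sk{C\tilde\alpha}{\zeta\,(\eta\cdot\beta)}_{\cH^{\C}}$ and $P_{2}=\Re\sk{C^{*}\tilde\beta}{\eta\,(\zeta\cdot\alpha)}_{\cH^{\C}}$, where $\eta\cdot\beta=\eta_{1}\beta_{1}+\eta_{2}\beta_{2}\in\cH$ and $\zeta\cdot\alpha=\zeta_{1}\alpha_{1}+\zeta_{2}\alpha_{2}\in\cH$. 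The key auxiliary estimate is that for every $D\in\{e^{\pm i\theta}B,e^{\pm i\theta}B^{*}\}$ and all $x,y\in\cH^{\C}$,
$$
|\sk{Dx}{y}_{\cH^{\C}}|\leq(1+\tan\theta^{*}_{2})\nor{Sx}\,\nor{Sy};
$$
this follows by writing $B=S\bigl(I+S^{-1}B_{{\bf a}}S^{-1}\bigr)S$, using $\nor{S^{-1}B_{{\bf a}}S^{-1}}_{\cB(\cH^{\C})}=\tan\theta^{*}_{2}$ from \eqref{eq: theta and norm}, and arguing identically for $B^{*}$. Combining it with $\nor{S(\eta\cdot\beta)}_{\cH}\leq|\eta|\,\nor{S\tilde\beta}_{\cH^{\C}}$ (triangle inequality, Cauchy--Schwarz, and $\nor{S\tilde\beta}^{2}=\nor{S\beta_{1}}^{2}+\nor{S\beta_{2}}^{2}$) we get $|P_{1}|\leq(1+\tan\theta^{*}_{2})|\zeta|\,|\eta|\,\nor{S\tilde\alpha}\nor{S\tilde\beta}$, and likewise $|P_{2}|\leq(1+\tan\theta^{*}_{2})|\zeta|\,|\eta|\,\nor{S\tilde\alpha}\nor{S\tilde\beta}$. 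Multiplying by $2(2-r)|\eta|^{-r}$ and invoking $|\zeta|\,|\eta|^{1-r}<1$, the cross-term contribution is at least $-4(2-r)(1+\tan\theta^{*}_{2})\nor{S\tilde\alpha}\nor{S\tilde\beta}$; adding the three estimates gives the corollary.

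The main obstacle is the cross-term estimate: one must recast the real bilinear pairings $P_{1},P_{2}$ in the complexified inner product, identify the correct numerical-range bound that produces the weight $S=B_{{\bf s}}^{1/2}$ rather than the unwanted factor $\nor{S^{-1}}$ a naive Cauchy--Schwarz would introduce, and keep track of signs when the constraint $|\zeta|^{r/(r-1)}<|\eta|^{r}$ is used in its two equivalent forms. The remainder is a routine assembly of Lemma~\ref{l: F2tensorF2-r} and Corollaries~\ref{c: H Fr} and~\ref{c: H F2-r}.
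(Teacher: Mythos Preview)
Your proof is correct and follows essentially the same approach as the paper's: apply Lemma~\ref{l: F2tensorF2-r} with $D=C$, $E=C^{*}$, bound the two diagonal terms via Corollary~\ref{c: H Fr} (with $r=2$) and Corollary~\ref{c: H F2-r}, and control the cross terms using \eqref{eq: sesqui*} together with the inner-product estimate $|\sk{Bz}{w}|\leq(1+\tan\theta^{*}_{2})\nor{Sz}\nor{Sw}$. The only cosmetic difference is that the paper obtains this last estimate by citing the generalised Cauchy--Schwarz inequality \cite[Proposition~1.8]{O}, whereas you derive it from the factorisation $B=S(I+S^{-1}B_{{\bf a}}S^{-1})S$ and \eqref{eq: theta and norm}; you also spell out the use of $|\zeta|<|\eta|^{r-1}$, which the paper leaves implicit in ``The corollary now follows.''
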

\begin{proof}
We apply Lemma~\ref{l: F2tensorF2-r} with $D=C$ and $E=C^{*}$. In order to estimate the first two terms in the right-hand side of \eqref{eq: F2tensorF2-r} we use Corollary~\ref{c: H Fr} with $r=2$ (note that $\theta\leq\theta_{r}<\theta_{2}$) and Corollary~\ref{c: H F2-r}. 

We now estimate the last two terms in the right hand side of \eqref{eq: F2tensorF2-r}. By \eqref{eq: sesqui*},
\begin{align*}
\Sk{(\eta\cdot\beta)\zeta}{\cM(C)\alpha}_{\cH^2}
&=\Re\Sk{C\tilde{\alpha}}{\widetilde{(\eta\cdot\beta)\zeta}}_{\cH^\C}
\hskip 4.6pt\leq \left|\Sk{B\tilde{\alpha}}{\widetilde{(\eta\cdot\beta)\zeta}}_{\cH^\C}\right|\\
\langle(\zeta\cdot\alpha)\eta,\cM(C^{*})\beta\rangle_{\cH^2}
&=\Re\Sk{C^{*}\tilde{\beta}}{\widetilde{(\zeta\cdot\alpha)\eta}}_{\cH^\C}
\leq \left|\Sk{B\tilde{\beta}}{\widetilde{(\zeta\cdot\alpha)\eta}}_{\cH^\C}\right|\,.
\end{align*}
By \eqref{eq: N2 bis} and the generalised Cauchy-Schwarz inequality \cite[Proposition 1.8]{O} we have
\begin{equation}
\label{vsenoschnoe}
|\sk{Bz}{w}_{\cH^\C}|\leq(1+\tan\theta^{*}_{2})\Nor{Sz}\Nor{Sw}.
\end{equation}
for $z,w\in\cH^\C$. One quickly sees that 
$$
\Nor{S\widetilde{(\eta\cdot\beta)\zeta}}_{\cH^\C}\leq
|\zeta||\eta|\nor{S\tilde\beta}_{\cH^\C}
$$
and similarly for $(\zeta\cdot\alpha)\eta$. The corollary now follows.
\end{proof}

\begin{proof}[{\bf Proof of Theorem~\ref{t: convexity}}]
Suppose that $|\zeta|^{p}>|\eta|^{q}>0$. Then $|\zeta|^{p-2}>|\eta|^{2-q}$, and for every $\delta>0$ the inequality \eqref{eq: final} with $a_{0}=2\Delta(p,\theta)$ follows from \eqref{eq: HQ} and Corollary~\ref{c: H Fr} applied first with $r=p$ and then with $r=q$.

Suppose now that $|\zeta|^{p}<|\eta|^{q}$. Then by \eqref{eq: HQ} and Corollary~\ref{c: H Fr} applied with $r=p$,
$$
H^{(C,C^{*})}_{\cQ}[v;\omega]\geq H^{C^{*}}_{F_q}[\eta;\beta]+\delta H^{(C,C^{*})}_{F_2\otimes F_{2-q}}[v;\omega].
$$
We now apply Corollary~\ref{c: H Fr} and Corollary~\ref{c: F2tensorF2-r}, both with $r=q$. To conclude it is now enough to take $\delta>0$ sufficiently small (with respect to $q$, $\theta_{2}$ and $\theta$) so that the following $2\times 2$ matrix is positive definite: 
$$
\left[\begin{array}{cc} \delta\Delta(2,\theta)
&\hskip 75pt 2\delta(1+\tan\theta^{*}_{2})(q-2)\\
2\delta(1+\tan\theta^{*}_{2})(q-2)
&\quad\quad \{q\Delta(q,\theta)-2\delta(1+\tan\theta^{*}_{2})\}(q-1)/2
\end{array}\right].
$$
With this choice of $\delta$ inequality \eqref{eq: final} holds with
\begin{align*}
& a_{0}=a_{0}(p,\theta_{2},\theta)=\sqrt{2\delta q(q-1)}\Delta(q,\theta).
\qedhere
\end{align*}
\end{proof}

\section{Proof of Theorem~\ref{t: fc principal}}\label{s: proof of mult thm}
Set $B=Q_{\infty}A^{*}$. Inequality \eqref{eq: *} together with Remark~\ref{r: optimal} imply that 
$$
\omega_{H^{\infty}}(\oL_{r})\geq\omega(\oL_{r})=\theta^{*}_{r}.
$$

Now let us prove the opposite inequality. For $f\in C^{\infty}_{c}(\R^{n})$ set
\begin{equation}\label{eq: div form}
\oL f=\nabla^{*}_{\infty}(B\nabla f)\quad {\rm and}\quad\oL^{*}f=\nabla^{*}_{\infty}(B^{*}\nabla f).
\end{equation}
We stress the fact that $\oL^{*}$ is the Ornstein-Uhlenbeck operator associated with a sesquilinear form of the type \eqref{eq: sesquilinear form}, but with $B$ replaced by $B^{*}$.
Recall that  $\oL_s$ and $\oL^*_s$ denote the realisation on $L^s(\mu_\infty)$ of $\oL$ and $\oL^*$, respectively. By means of the theory of sesquilinear forms \cite{K,O}, $(\oL_{2})^{*}=\oL^{*}_{2}$. It follows that 
$$
(\oL_{r})^{*}=\oL^{*}_{r/(1-r)},\quad 1<r<\infty.
$$ 
Therefore, it is enough to prove that Theorem~\ref{t: fc principal} holds for all $r=p\geq 2$ for both $\oL_{p}$ and $\oL^{*}_{p}$. 

Fix $p>2$ and $0\leq \theta<\theta_{p}$. By Theorem~\ref{t: heat flow}, it suffices to prove that there exist $\delta>0$ and a corresponding Bellman function $\cQ$ defined by \eqref{eq: defi Bellman} such that \eqref{eq: heat flow infinitesimal} holds for the two one-to-one operators $\oL_{p}(I-\oP_{p})$ and $\oL^{*}_{p}(I-\oP_{p})$, where $\oP_{p}$ is the projection onto the null space defined by \eqref{eq: projection}.
\begin{lemma}\label{l: measure theory}
Let $(X,\mu)$ be a finite measure space. Let $r\in [1,\infty]$ and let $r'=r/(r-1)$. Suppose that $(F_{n})$ and $(G_{n})$ are two sequences of measurable functions with the following properties.
\begin{itemize}
\item[{\rm (i)}] $F_{n}\in L^{r}(\mu)$ and converges to $F$ in $L^{r}(\mu)$.
\item[{\rm (ii)}] There exists $C_{0}>0$ such that $\sup_{n}\|G_{n}\|_{r'}\leq C_{0}$.
\item[{\rm (iii)}] $G_{n}$ converges almost everywhere to a function $G$ which belongs to  $L^{r'}(\mu)$.
\end{itemize}
Then $(F_{n}G_{n})_{n\in\N}$ converges to $FG$ in $L^{1}(\mu)$.
\end{lemma}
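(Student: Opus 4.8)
The plan is to control $F_nG_n-FG$ by the elementary splitting
$$
F_{n}G_{n}-FG=(F_{n}-F)G_{n}+F(G_{n}-G),
$$
and to show that each of the two summands tends to $0$ in $L^{1}(\mu)$, but by different principles. First note the two preliminary facts we shall use freely: since $F_{n}\to F$ in $L^{r}(\mu)$ we have $F\in L^{r}(\mu)$; and since $G_{n}\to G$ a.e.\ with $\sup_{n}\|G_{n}\|_{r'}\leq C_{0}$, Fatou's lemma gives $G\in L^{r'}(\mu)$ with $\|G\|_{r'}\leq C_{0}$. The first summand is then immediate from H\"older's inequality and hypothesis (ii): $\|(F_{n}-F)G_{n}\|_{1}\leq\|F_{n}-F\|_{r}\,\|G_{n}\|_{r'}\leq C_{0}\,\|F_{n}-F\|_{r}\to0$ by (i).

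The second summand is the substantive point. Here $F(G_{n}-G)\to0$ a.e., but there is no obvious $\mu$-integrable majorant independent of $n$, so the dominated convergence theorem does not apply directly; the right tool is Vitali's convergence theorem on the finite measure space $(X,\mu)$, for which it suffices to verify that the family $\{F(G_{n}-G)\}_{n}$ is uniformly integrable. For an arbitrary measurable set $A\subseteq X$, H\"older's inequality together with (ii) and $\|G\|_{r'}\leq C_{0}$ yields, uniformly in $n$,
$$
\int_{A}|F|\,|G_{n}-G|\wrt\mu\leq\|F\mathbf{1}_{A}\|_{r}\,\big(\|G_{n}\|_{r'}+\|G\|_{r'}\big)\leq 2C_{0}\,\|F\mathbf{1}_{A}\|_{r}.
$$
When $1\leq r<\infty$ the function $|F|^{r}$ is $\mu$-integrable, so by absolute continuity of the Lebesgue integral $\|F\mathbf{1}_{A}\|_{r}$ becomes arbitrarily small once $\mu(A)$ is small; this is precisely the required uniform integrability, and Vitali's theorem then gives $\int_{X}|F|\,|G_{n}-G|\wrt\mu\to0$. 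Combining the two summands proves the lemma. In the borderline case $r=1$ (so $r'=\infty$) one can dispense with Vitali altogether, since $|F(G_{n}-G)|\leq 2C_{0}|F|\in L^{1}(\mu)$ and the ordinary dominated convergence theorem applies.

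I expect the only real obstacle to be the treatment of $F(G_{n}-G)$: one must recognise that a.e.\ convergence of $(G_{n})$ plus a uniform $L^{r'}$-bound does not on its own force $\int|F|\,|G_{n}-G|\wrt\mu\to0$, and that the missing ingredient—uniform integrability—is supplied not by the $G_{n}$'s but by the integrability of $|F|^{r}$, via absolute continuity of the integral. Everything else is a routine application of H\"older's inequality and Fatou's lemma.
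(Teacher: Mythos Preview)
Your argument is correct for $1\leq r<\infty$ and takes a somewhat different route from the paper's. The paper also splits $F_nG_n-FG$, but it handles the piece $F(G_n-G)$ without invoking Vitali: it decomposes further according to whether $|G_n|\leq|G|+1$. On $\{|G_n|\leq|G|+1\}$ one has the fixed dominating function $|F|(2|G|+1)\in L^1(\mu)$, so ordinary dominated convergence applies; on the complementary set one uses H\"older together with dominated convergence applied to $|F|^r\mathbf{1}_{\{|G_n|>|G|+1\}}\leq|F|^r\in L^1(\mu)$, the indicator tending to $0$ a.e. Your appeal to Vitali via the absolute continuity of $\int|F|^r$ is exactly this truncation idea packaged into a single citation; the paper's version is more hands-on and avoids naming Vitali, at the cost of carrying an extra explicit term. (Incidentally, the displayed three-term identity in the paper contains a small typo---as written the terms sum to $F_nG-FG_n$ rather than $F_nG_n-FG$---but the intended argument is clear.)

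One small gap worth flagging: you do not treat the endpoint $r=\infty$, and in fact the lemma as stated fails there (on $[0,1]$ with Lebesgue measure take $F_n=F\equiv1$, $G_n=n\mathbf{1}_{[0,1/n]}$, so $G_n\to0$ a.e., $\|G_n\|_1=1$, yet $\|F_nG_n-FG\|_1=1$). The paper's dominated-convergence argument has the same limitation, so the hypothesis should really read $r\in[1,\infty)$; this is all that is used downstream.
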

\begin{proof}
Write
$$
F_{n}G_{n}-FG=(F_{n}-F)G+F(G-G_{n})\ca_{\{|G_{n}|\leq |G|+1\}}+(F\ca_{\{|G_{n}|>|G|+1\}}).(G-G_{n})
$$
Now use H\"older inequality and Lebesgue dominated convergence theorem. 
\end{proof}
Since $C^{\infty}_{c}(\R^{n})$ is a core for $\oL_{p}$ and $\oL^{*}_{p}$, and the partial derivatives of $\cQ$ satisfy the second estimate in Proposition~\ref{p: estimates Bellman}, by Lemma~\ref{l: measure theory} it is enough to show that  \eqref{eq: heat flow infinitesimal} holds for all $f,g\in C^{\infty}_{c}(\R^{n})$ when $\oA=\oL$ or $\oA=\oL^{*}$.

\medskip
Choose $\delta\in(0,1)$ and the corresponding $\cQ$ as in Theorem~\ref{t: convexity}. Fix $f,g\in C^{\infty}_{c}(\R^{n})$ and $\epsilon>0$. A straightforward integration by parts based on \eqref{eq: div form} gives

\begin{equation}
\label{eq: by parts}
\aligned
2\Re\int_{\R^n} &\left(e^{\pm i\theta}(\partial_\zeta \cQ*\psi_{\epsilon})(f,g)\oL f+ e^{\mp i\theta}(\partial_\eta \cQ*\psi_{\epsilon})(f,g)\oL^{*} g\right)\wrt\mu_\infty\\
&=\int_{\R^{n}}H^{\left(e^{\pm i\theta}B,e^{\mp i\theta}B^{*}\right)}_{\cQ*\psi_{\epsilon}}\left[(f,g);(\nabla f,\nabla g)\right]\wrt\gamma_{\infty}\,.
\endaligned
\end{equation}

It follows from Corollary~\ref{c: convexity}, applied with $\cH=\R^{n}$ and $B=Q_{\infty}A^{*}$, that for all $\epsilon>0$,
$$
\aligned
2\Re\int_{\R^n} \left(e^{\pm i\theta}(\partial_\zeta \cQ*\psi_{\epsilon})(f,g)\oL f+ e^{\mp i\theta}(\partial_\eta \cQ*\psi_{\epsilon})(f,g)\oL^{*} g\right)&\wrt\mu_\infty\\
&\hskip -90pt\geq a_{0}\int_{\R^{n}}\norm{B_{{\bf s}}^{1/2}\nabla f}{}\norm{B_{{\bf s}}^{1/2}\nabla g}{}\wrt\gamma_{\infty}\,.
\endaligned
$$

Since $\cQ\in C^{1}(\R^{4})$, one has that $\partial_\zeta \cQ*\psi_{\epsilon}$ and $\partial_\eta \cQ*\psi_{\epsilon}$ converge pointwise respectively to $\partial_\zeta \cQ$ and $\partial_{\eta}\cQ$ on $\R^{4}$, as $\epsilon\rightarrow 0_{+}$. Therefore, by the second estimate in Proposition~\ref{p: estimates Bellman} and Lebesgue dominated convergence theorem, we deduce that
$$
\aligned
2\Re\int_{\R^n} \left(e^{\pm i\theta}(\partial_\zeta \cQ)(f,g)\oL f+ e^{\mp i\theta}(\partial_\eta \cQ)(f,g)\oL^{*} g\right)&\wrt\mu_\infty\\
&\hskip -40pt\geq a_{0}\int_{\R^{n}}\norm{B_{\bf s}^{1/2}\nabla f}{}\norm{B_{\bf s}^{1/2}\nabla g}{}\wrt\gamma_{\infty}\,.
\endaligned
$$
By integrating by parts and using the identities \eqref{eq: div form} and \eqref{vsenoschnoe}, we see that
$$
\mod{\int_{\R^{n}}\oL f\cdot\overline{g}\wrt\gamma_{\infty}}\leq \int_{\R^{n}}|\sk{B\nabla f}{\nabla g}|\wrt\gamma_{\infty}
\leq(1+\tan\theta^{*}_{2})\int_{\R^{n}}\norm{B_{\bf s}^{1/2}\nabla f}{}\norm{B_{{\bf s}}^{1/2}\nabla g}{}\wrt\gamma_{\infty}.
$$
It follows that $(\oL,\R^{n},\gamma_{\infty})$ satisfies \eqref{eq: heat flow infinitesimal} for all $f,g\in C^{\infty}_{c}(\R^{n})$ with $C_{0}=a_{0}/(1+\tan\theta^{*}_{2})$.
By copying the same proof but with $B$ replaced by $B^*$, we see that the same conclusion holds also for $(\oL^{*},\R^{n},\gamma_{\infty})$, which finishes the proof of Theorem~\ref{t: fc principal}.\qed

\begin{remark}
The integration by parts \eqref{eq: by parts} was the main reason for introducing Notation~\ref{d: H^{M}} and studying Theorem~\ref{t: convexity}.
\end{remark}

\begin{remark}
Our proof of Theorem~\ref{t: fc principal} does not use  a priori the analyticity of Ornstein-Uhlenbeck semigroup proved in \cite[Theorem 2]{CFMP} (see Proposition~\ref{p: OU on Lp}). However, it is based on Theorem~\ref{t: convexity}, whose proof makes use of a calculation contained in \cite{CFMP} and \cite{CFMP2} that, in turn, is equivalent to \cite[Theorem 2]{CFMP} (see Proposition~\ref{p: convexity of Fr}).
\end{remark}

\section{Infinite dimensional setting}\label{I: s: OU}

The main sources for the background material presented in this section are \cite{vN, GvN, MvN, M} and the books \cite{Bogachev, Jan, Nu}. The interested reader should  also consult \cite{DPZ, DP, DPZ2}.

We consider a real separable Banach space $E$. We denote by $E^{*}$ its dual and by $\sk{x}{x^{*}}$, $x\in E$, $x^{*}\in E^{*}$ the dual paring.

We say that $Q\in\cB(E^{*},E)$ is nonnegative if $\sk{Qx^{*}}{x^{*}}\geq 0$, for all $x^{*}\in E^{*}$, and symmetric if $\sk{Qx^{*}}{y^{*}}=\sk{Qy^{*}}{x^{*}}$, for all $x^{*},y^{*}\in E^{*}$.
\subsection*{Gaussian measures on $E$ \cite{Bogachev}} For each $\sigma\geq 0$ the centered Gaussian measure $\gamma_{\sigma}$ of variance $\sigma^{2}$ in $\R$ is the Borel probability measure on $\R$ defined by $\gamma_{0}=\delta_{0}$ and
$$
\wrt\gamma_{\sigma}(t)=\frac{1}{\sqrt{2\pi}\sigma}\exp\left(-\frac{t^{2}}{2\sigma^{2}}\right),\quad t\in\R,\quad \sigma>0.
$$
A Borel probability measure $\gamma$ on $E$ is called (centred) Gaussian if for all  $x^{*}\in E^{*}$ the push forward $x^{*}_{\sharp}\gamma$ is a centred Gaussian measure on $\R$.

If $\gamma$ is a Gaussian measure on $E$, then the covariance operator $Q\in\cB(E^{*},E)$ associated with $\gamma$ is defined by the Bochner integral
$$
Qx^{*}=\int_{E}\sk{x}{x^{*}}x\wrt\gamma(x),\quad x^{*}\in E^{*}.
$$
By Fernique's theorem (see \cite[Corollary~2.8.6]{Bogachev}) Gaussian measures have finite moments of every order. It follows that $Q$ is well defined, nonnegative and symmetric. Moreover, for each $x^{*}\in E^{*}$ the measure $x^{*}_{\sharp}\gamma$ is of variance $\sk{Qx^{*}}{x^{*}}$.

\subsection*{Reproducing kernel Hilbert space \cite{GvN,vN}} 
Let $Q\in\cB(E^{*},E)$ be symmetric and nonnegative. Consider on $\Ran(Q)$ the bilinear form $\sk{\cdot}{\cdot}_{\cH_{Q}}$ defined by
$$
\sk{Qx^{*}}{Qy^{*}}_{\cH_{Q}}=\sk{Qx^{*}}{y^{*}},\quad x^{*},y^{*}\in E^{*}.
$$
The bilinear form $\sk{\cdot}{\cdot}_{\cH_{Q}}$ is well defined and induces on $\Ran(Q)$ a scalar product. The reproducing kernel Hilbert space (RKHS) $\cH_{Q}$ associated with $Q$ is the completion of $\Ran(Q)$ with respect to the norm induced by the inner product $\sk{\cdot}{\cdot}_{\cH_{Q}}$.

The identity map $I:\Ran(Q)\rightarrow E$ extends to a continuous embedding $i_{Q}:\cH_{Q}\hookrightarrow E$. Moreover, we have the factorisation $$i_{Q}i^{*}_{Q}=Q.$$

\subsection*{Hypothesis \Hmu.} Consider a symmetric and nonnegative operator $Q\in \cB(E^{*},E)$, and a strongly continuous semigroup $(S(t))_{t>0}$ on $E$ of generator $-A$.

By \cite[Proposition 1.2]{vN} the function $s\mapsto S(s)QS^{*}(s)x^{*}$ is strongly measurable. Therefore, for each $t>0$ we can define the symmetric nonnegative operator $Q_{t}\in \cB(E^{*},E)$ by the formula
$$
Q_{t}x^{*}=\int^{t}_{0}S(s)QS^{*}(s)x^{*}\wrt s,\quad x^{*}\in E^{*},
$$
where the integral exists as a Bochner integral on $E$.

When the semigroup $(S(t))_{t>0}$ is not uniformly exponentially stable, it might happen that the (weak) $\lim_{t\rightarrow\infty}Q_{t}x^{*}$ does not exist for all $x^{*}\in E^{*}$.

Moreover, in contrast to the finite dimensional case, it is not always true that every nonnegative symmetric operator in $\cB(E^{*},E)$ is the covariance of a Gaussian measure on $E$; for example if $E$ is a Hilbert space, then the class of covariance operators of Gaussian measures coincides with the class of all nonnegative symmetric operators of trace class on $E$. We refer the interested reader to \cite{Bogachev,GvN,vN} for several examples and comments.
\medskip

Following \cite{GvN,vN},  we say that Hypotesis \Hmu\ holds if 
\begin{itemize}
\item[{\rm (i)}] for all $x^{*}\in E^{*}$, the weak-$\lim_{t\rightarrow \infty}Q_{t}x^{*}$ exists in $E$;
\item[{\rm (ii)}] the symmetric nonnegative operator $Q_{\infty}\in\cB(E^{*},E)$ defined by
$$
Q_{\infty}x^{*}={\rm weak-}\lim_{t\rightarrow\infty}Q_{t}x^{*},\quad x^{*}\in E^{*}
$$
is the covariance of a Gaussian measure $\gamma_{\infty}$ on $E$.
\end{itemize}
It can be proved that \Hmu\ implies that for all $t>0$ the operator $Q_{t}$ is the covariance of a Gaussian measure on $E$ which we denote by $\gamma_{t}$; see, for example, \cite[Section~7]{GvN}.
\begin{notation}
We denote by $\cH_{\infty}$ the reproducing kernel Hilbert space associated with $Q_{\infty}$, and we set $i_{\infty}=i_{Q_{\infty}}$. 
\end{notation}

\begin{lemma}[{\cite[Theorem 6.2]{vN}}]
If (part (i) of ) hypothesis \Hmu\ holds, then the subspace $i_{\infty}(\cH_{\infty})$ is invariant for the action of the semigroup $(S(t))_{t>0}$ and the equation
$$
i_{\infty}\circ S_{\infty}(t)=S(t)\circ i_{\infty},\quad t>0,
$$
defines a strongly continuous contraction semigroup $(S_{\infty}(t))_{t>0}$ on $\cH_{\infty}$.
\end{lemma}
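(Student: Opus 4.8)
The plan is to reduce the statement to one algebraic identity relating $Q_{\infty}$, $Q_{t}$ and the semigroup, and then to transplant the strong continuity of $(S(t))_{t>0}$ from $E$ to $\cH_{\infty}$; throughout I use that the RKHS embedding $i_{\infty}$ is injective and that $i_{\infty}i_{\infty}^{*}=Q_{\infty}$. The first step is to show that part (i) of \Hmu\ forces the identity
\[
Q_{\infty}=Q_{t}+S(t)Q_{\infty}S^{*}(t),\qquad t>0 .
\]
Indeed, splitting the defining Bochner integral at $t$ and using the semigroup law together with boundedness of $S(t)$, one has $Q_{t+\tau}x^{*}=Q_{t}x^{*}+\int_{0}^{\tau}S(t)S(s)QS^{*}(s)S^{*}(t)x^{*}\wrt s=Q_{t}x^{*}+S(t)\big(Q_{\tau}S^{*}(t)x^{*}\big)$ for $t,\tau>0$; letting $\tau\to\infty$ and invoking part (i) of \Hmu\ --- applied both to $x^{*}$ and to $S^{*}(t)x^{*}$ --- together with the weak continuity of $S(t)$, yields the identity weakly, hence as an operator identity. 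Since $Q_{t}\geq0$, this gives, for all $x^{*}\in E^{*}$,
\[
\nor{i_{\infty}^{*}S^{*}(t)x^{*}}_{\cH_{\infty}}^{2}=\sk{S(t)Q_{\infty}S^{*}(t)x^{*}}{x^{*}}\leq\sk{Q_{\infty}x^{*}}{x^{*}}=\nor{i_{\infty}^{*}x^{*}}_{\cH_{\infty}}^{2}.
\]

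For the construction, fix $t>0$ and $h\in\cH_{\infty}$. By $i_{\infty}i_{\infty}^{*}=Q_{\infty}$ and the displayed estimate, for every $x^{*}\in E^{*}$,
\[
|\sk{S(t)i_{\infty}h}{x^{*}}|=|\sk{h}{i_{\infty}^{*}S^{*}(t)x^{*}}_{\cH_{\infty}}|\leq\nor{h}_{\cH_{\infty}}\nor{i_{\infty}^{*}S^{*}(t)x^{*}}_{\cH_{\infty}}\leq\nor{h}_{\cH_{\infty}}\nor{i_{\infty}^{*}x^{*}}_{\cH_{\infty}} .
\]
Hence $i_{\infty}^{*}x^{*}\mapsto\sk{S(t)i_{\infty}h}{x^{*}}$ is a well-defined linear functional, of norm at most $\nor{h}_{\cH_{\infty}}$, on $\Ran(i_{\infty}^{*})$, which is dense in $\cH_{\infty}$ since $i_{\infty}$ is injective. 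By Riesz' theorem there is a unique $\ell\in\cH_{\infty}$ with $\nor{\ell}_{\cH_{\infty}}\leq\nor{h}_{\cH_{\infty}}$ and $\sk{i_{\infty}\ell}{x^{*}}=\sk{\ell}{i_{\infty}^{*}x^{*}}_{\cH_{\infty}}=\sk{S(t)i_{\infty}h}{x^{*}}$ for all $x^{*}$, i.e. $i_{\infty}\ell=S(t)i_{\infty}h$. Setting $S_{\infty}(t)h:=\ell$ defines a linear contraction of $\cH_{\infty}$ satisfying $i_{\infty}\circ S_{\infty}(t)=S(t)\circ i_{\infty}$; in particular $i_{\infty}(\cH_{\infty})$ is $S(t)$-invariant, and $S_{\infty}(t)S_{\infty}(s)=S_{\infty}(t+s)$ follows from $i_{\infty}\big(S_{\infty}(t)S_{\infty}(s)h\big)=S(t)S(s)i_{\infty}h=S(t+s)i_{\infty}h$ and injectivity of $i_{\infty}$.

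There remains strong continuity, which I would obtain by testing on the dense set of elements $Q_{\infty}x^{*}\in\cH_{\infty}$. The reproducing property $\sk{g}{Q_{\infty}x^{*}}_{\cH_{\infty}}=\sk{i_{\infty}g}{x^{*}}$ and the intertwining relation give $\sk{S_{\infty}(t)Q_{\infty}x^{*}}{Q_{\infty}x^{*}}_{\cH_{\infty}}=\sk{S(t)Q_{\infty}x^{*}}{x^{*}}$, whence, using only that $S_{\infty}(t)$ is a contraction and $\nor{Q_{\infty}x^{*}}_{\cH_{\infty}}^{2}=\sk{Q_{\infty}x^{*}}{x^{*}}$,
\[
\nor{S_{\infty}(t)Q_{\infty}x^{*}-Q_{\infty}x^{*}}_{\cH_{\infty}}^{2}\leq 2\sk{Q_{\infty}x^{*}-S(t)Q_{\infty}x^{*}}{x^{*}}\longrightarrow 0\quad\text{as }t\to0^{+},
\]
because $(S(t))_{t>0}$ is strongly continuous on $E$ and $Q_{\infty}x^{*}\in E$. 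Density of $\{Q_{\infty}x^{*}\}$ in $\cH_{\infty}$ together with the uniform bound $\nor{S_{\infty}(t)}\leq1$ promotes this to $S_{\infty}(t)h\to h$ in $\cH_{\infty}$ for every $h\in\cH_{\infty}$; continuity at an arbitrary $t_{0}>0$ is then deduced from the semigroup law and contractivity. Hence $(S_{\infty}(t))_{t>0}$ is a strongly continuous contraction semigroup on $\cH_{\infty}$.

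The two steps I expect to be the more delicate are the first --- obtaining the sharp identity $Q_{\infty}=Q_{t}+S(t)Q_{\infty}S^{*}(t)$, rather than merely the inequality $S(t)Q_{\infty}S^{*}(t)\leq Q_{\infty}$, with careful justification of the weak limit and of pulling $S(t)$ through the Bochner integral --- and the strong continuity, for which the contraction property of $S_{\infty}(t)$ and the explicit polarization estimate displayed above are exactly what is needed; by contrast, the construction of $S_{\infty}(t)$ itself is soft functional analysis once the estimate from the first step is in hand.
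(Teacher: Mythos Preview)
The paper does not supply its own proof of this lemma; it is simply quoted from \cite[Theorem~6.2]{vN} with no argument given. So there is nothing in the paper to compare your proposal against.

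That said, your argument is correct and is essentially the standard proof of this fact. The key identity $Q_{\infty}=Q_{t}+S(t)Q_{\infty}S^{*}(t)$ is exactly what drives the result in the literature, and your derivation of it from part~(i) of \Hmu\ is fine: the passage of $S(t)$ through the weak limit is justified because $S(t)\in\cB(E)$ is weak--weak continuous. From that identity the contractivity estimate on $i_{\infty}^{*}S^{*}(t)$ follows immediately, and your Riesz-representation construction of $S_{\infty}(t)$ is clean; note that what you have really built first is the adjoint $S_{\infty}(t)^{*}$ as the unique contractive extension of $i_{\infty}^{*}x^{*}\mapsto i_{\infty}^{*}S^{*}(t)x^{*}$, and then $S_{\infty}(t)$ is its adjoint---but your direct duality argument lands at the same place. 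The strong-continuity step via the inequality $\nor{S_{\infty}(t)h-h}_{\cH_{\infty}}^{2}\leq 2\bigl(\nor{h}_{\cH_{\infty}}^{2}-\sk{S_{\infty}(t)h}{h}_{\cH_{\infty}}\bigr)$ on the dense set $\{Q_{\infty}x^{*}\}$, combined with uniform contractivity, is the standard and correct way to finish.

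Your closing remark slightly overstates the delicacy of the first step: you only need the inequality $S(t)Q_{\infty}S^{*}(t)\leq Q_{\infty}$, not the full identity, and that inequality already follows from $Q_{t+\tau}\geq S(t)Q_{\tau}S^{*}(t)$ before passing to the limit.
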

\begin{notation}
We denote by $-A_{\infty}$ the generator of $(S_{\infty}(t))_{t>0}$ on $\cH_{\infty}$.
\end{notation}
In the rest of this paper we will always assume \Hmu. Note however that some of the results that we will recall still hold true under weaker assumptions. The interested reader should consult \cite{vN,GvN} and the references contained therein. 

\subsection*{Paley-Wiener isometry} Note that $i^{*}_{\infty}(E^{*})$ is dense in $\cH_{\infty}$, because $i_{\infty}$ is injective.
The map $\phi:i^{*}_{\infty}x^{*}\mapsto \sk{\cdot}{x^{*}}$ extends to an isometry $\phi: \cH_{\infty}\rightarrow L^{2}_{\R}(\gamma_{\infty})$ \cite[Proposition~1.12]{M}. 
The map $\phi$ is called Paley-Wiener isometry. We will use the notation $\phi_{h}=\phi(h)$, $h\in \cH_{\infty}$.

\subsection*{Cylindrical functions}
If $H_{0}$ is a linear subspace of $H_{\infty}$ and  $k\in\N\cup\{\infty\}$, we denote by $\cF C^{k}_{b}(E;H_{0})$ the vector space of all, $\gamma_{\infty}$-almost everywhere defined, functions $f:E\rightarrow \C$ of the form
\begin{equation}\label{I: eq: cilindrica}
f(x)=\varphi(\phi_{h_{1}}(x),\dots,\phi_{h_{n}}(x)),
\end{equation}
where $n\geq 1$, $\varphi\in C^{k}_{b}(\R^{n})$, and $h_{1},\dots,h_{n}\in H_{0}$. If $H_{0}$ is a dense subspace of $H_{\infty}$, then $\cF C^{k}_{b}(E;H_{0})$, $k\in\N\cup\{\infty\}$, is dense in $L^{r}(\gamma_{\infty})$, whenever $1\leq r<\infty$; see, for example, \cite[Lemma~1.28]{M}.

\subsection*{Second quantisation}The Paley-Wiener isometry can be used to construct the Wiener-It\^{o} chaos decomposition of $L^{2}_{\R}(\gamma_{\infty})$ (see \cite[Chapter~2]{Jan} and \cite[Theorem~1.1.1]{Nu}), which identifies $L^{2}_{\R}(\gamma_{\infty})$ with the symmetric Fock space associated with $\cH_{\infty}$  \cite[Chapter~IV and Appendix~E]{Jan}. This identification allows us to associate with each contraction $T\in\cB(\cH_{\infty})$ a contraction $\Gamma(T)\in\cB(L^{2}_{\R}(\gamma_{\infty}))$, where $\Gamma$ denotes the real second quantisation functor. Analogously, we can associate with each contraction $U\in\cB(\cH^{\C}_{\infty})$ a contraction $\Gamma^{\C}(U)\in \cB(L^{2}(\gamma_{\infty}))$, where $\Gamma^{\C}$ denotes the complex second quantisation functor. For a contraction $T\in\cB(\cH_{\infty})$ one has $\Gamma(T)^{\C}=\Gamma^{\C}(T^{\C})$; see \cite[Appendix~E]{Jan}. The interested reader should consult also \cite{Ne,Si,CG,vN,M} and the references therein.

\subsection*{Lyapunov equation.}
The operator $Q_{\infty}$ is the minimal solution of the Lyapunov equation 
\begin{equation*}
Q_{\infty}A^{*}+AQ_{\infty}=Q;
\end{equation*}
meaning that $Q_{\infty}x^{*}\in \Dom(A)$ and $Q_{\infty}A^{*}x^{*}+AQ_{\infty}x^{*}=Qx^{*}$, for all $x^{*}\in \Dom(A^{*})$; see \cite[Theorem 4.4]{GvN}.

\subsection*{Ornstein-Uhlenbeck semigroup with invariant measure} 
The Ornstein-Uhlenbeck semigroup $(T(t))_{t>0}$ associated with $(S(t))_{t>0}$ and $Q$ is defined on $C_{b}(E)$ by the Kolmogorov's formula 
$$
T(t)f(x)=\int_{E}f(S(t)x+y)\wrt\gamma_{t}(y),\quad x\in E,\quad t\geq 0.
$$
The measure $\gamma_{\infty}$ is invariant for the Ornstein-Uhlenbeck semigroup. Moreover, for every $r\in [1,\infty]$, $(T(t))_{t>0}$ extends to a positivity preserving semigroup of contractions on $L^{r}(\gamma_{\infty})$, which is strongly continuous for $1\leq r<\infty$ and weak*-continuous for $r=\infty$; see \cite[Section~2]{vN}.

\begin{notation}
For $1<r<\infty$, denote by $-\oL_{r}$ the generator of $(T(t))_{t>0}$ on $L^{r}(\gamma_{\infty})$.
\end{notation}
It was proved in \cite[Theorem 1]{CG} (see also \cite[Theorem 6.12]{vN}) that on $L^{2}(\gamma_{\infty})$ the semigroup $(T(t))_{t>0}$ coincides with the complexification of the second quantisation of $(S^{*}_{\infty}(t))_{t>0}$; i.e.
\begin{equation}\label{I: eq: sqz*}
T(t)=\Gamma\left(S^{*}_{\infty}(t)\right)^{\C}=\Gamma^{\C}\left(S^{*}_{\infty}(t)^{\C}\right),\quad t>0.
\end{equation}
In particular, for every cylindrical function $f=\varphi (\phi_{h_{1}},\dots,\phi_{h_{n}})\in\cF C_{b}(E;\cH_{\infty})$ we have the Mehler's formula:
\begin{equation}\label{eq: Mehler inf}
\aligned
T(t)f(x)=\int_{E}\varphi\big(\phi_{S^{*}_{\infty}(t)h_{1}}(x)&+\phi_{\sqrt{I-S_{\infty}(t)S^{*}_{\infty}(t)}h_{1}}(y),\dots\\
&\dots,\phi_{S^{*}_{\infty}(t)h_{n}}(x)+\phi_{\sqrt{I-S_{\infty}(t)S^{*}_{\infty}(t)}h_{n}}(y) \big)\wrt\gamma_{\infty}(y).
\endaligned
\end{equation}
By \cite[Proposition 2.4]{GvN} the semigroup $(S^{*}_{\infty}(t))_{t>0}$ is strongly exponentially stable.

The infinite-dimensional analogue of \eqref{eq: projection} now follows from the continuity of the Paley-Wiener isometry, from \eqref{eq: Mehler inf} and from the density of cylindrical functions in $L^{r}(\gamma_{\infty})$: for every $r\in [1,\infty)$ the projection $\oP_{r}$ onto ${\rm N}(\oL_{r})$ is given by
\begin{equation}\label{I: eq: pro}
\oP_{r}f=\lim_{t\rightarrow\infty}T(t)f=\int_{E}f\wrt\gamma_{\infty},\quad f\in L^{p}(\gamma_{\infty}).
\end{equation}
\subsection*{Analyticity of the Ornstein-Uhlenbeck semigroup.} As we already remarked in Remark~\ref{r: analiticity}, the semigroup $(T(t))_{t>0}$ may fail to be analytic in $L^{2}(\gamma_{\infty})$ on  some sector of positive angle of the complex plane. It follows from \eqref{I: eq: sqz*} that $(T(t))_{t>0}$ has a bounded analytic extension in $L^{2}(\gamma_{\infty})$ on the sector $\bS_{\theta}$, $\theta>0$, if and only if $(S^{*}_{\infty}(t)^{\C})_{t>0}$ (or equivalently $(S_{\infty}(t)^{\C})_{t>0}$) extends to a contractive analytic semigroup on $\bS_{\theta}$; if this is the case, then $(T(t))_{t>0}$ is a contraction in $L^{2}(\gamma_{\infty})$ on $\bS_{\theta}$ \cite[Theorem 8.1]{GvN}.
\medskip 

Next result characterises analytic Ornstein-Uhlenbeck semigroups. 
\begin{proposition}[{\cite[Proposition 2.1 and Lemma 2.2]{MvN}}]\label{I: p: 2}
The following assertions are equivalent.
\begin{itemize}
\item[{(\rm i)}] The Ornstein-Uhlenbeck semigroup $(T(t))_{t>0}$ is analytic on $L^{2}(\gamma_{\infty})$ in some sector of positive angle.
\item[{(\rm ii)}] There exists a unique operator $B_{Q}\in\cB(H_{Q})$ such that 
$$
i_{Q}B_{Q}i^{*}_{Q}x^{*}=Q_{\infty}A^{*}x^{*},\quad x^{*}\in \Dom(A^{*}).
$$
In this case we have that
\begin{equation}
\label{I: eq: Lyapunov}
B_{Q}+B^{*}_{Q}=I.
\end{equation}
\end{itemize}
\end{proposition}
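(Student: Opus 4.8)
\emph{Overview and Step 1.} My plan is to transfer analyticity of $(T(t))_{t>0}$ on $L^{2}(\gamma_\infty)$, through the second-quantisation identity \eqref{I: eq: sqz*}, to a numerical-range condition for the generator of $(S_\infty(t))_{t>0}$ on $\cH_\infty$, and then to move that condition across the factorisations $i_\infty i_\infty^{*}=Q_\infty$ and $i_Q i_Q^{*}=Q$ using the intertwining $S(t)i_\infty=i_\infty S_\infty(t)$ and the Lyapunov equation $Q_\infty A^{*}+AQ_\infty=Q$ (here $A_\infty^{*}$ denotes the adjoint of $A_\infty$ on $\cH_\infty^\C$, so that $-A_\infty^{*}$ generates $(S_\infty^{*}(t))$). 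Concretely, by \eqref{I: eq: sqz*} and \cite[Theorem~8.1]{GvN} assertion (i) is equivalent to the statement that $(S_\infty^{*}(t)^\C)_{t>0}$ extends to a contractive analytic semigroup on $\bS_\theta$ for some $\theta>0$; applying the Lumer-Phillips theorem to the rotated generators $e^{\pm i\theta}A_\infty^{*}$, this holds exactly when
$$
\bigl|\Im\sk{A_\infty^{*}h}{h}_{\cH_\infty^\C}\bigr|\le(\cot\theta)\,\Re\sk{A_\infty^{*}h}{h}_{\cH_\infty^\C},\qquad h\in\Dom(A_\infty^{*}),
$$
and to establish such a bound it suffices to verify it on a core for $A_\infty^{*}$.

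\emph{Step 2: bridge identities.} I would first differentiate $S(t)i_\infty=i_\infty S_\infty(t)$ and take adjoints to obtain $A_\infty^{*}i_\infty^{*}x^{*}=i_\infty^{*}A^{*}x^{*}$ for $x^{*}\in\Dom(A^{*})$, so that $Q_\infty A^{*}x^{*}=i_\infty A_\infty^{*}i_\infty^{*}x^{*}$ and, by the symmetry of $Q_\infty$,
$$
\sk{A_\infty^{*}i_\infty^{*}x^{*}}{i_\infty^{*}x^{*}}_{\cH_\infty^\C}=\sk{Q_\infty A^{*}x^{*}}{x^{*}},\qquad x^{*}\in\Dom(A^{*}).
$$
Next, pairing the Lyapunov equation (applied to $x^{*}$) with $x^{*}$ and again using the symmetry of $Q_\infty$ gives $\Re\sk{Q_\infty A^{*}x^{*}}{x^{*}}=\tfrac12\sk{Qx^{*}}{x^{*}}=\tfrac12\Nor{i_Q^{*}x^{*}}_{\cH_Q^\C}^{2}$. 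Finally, I would record that $\{i_\infty^{*}x^{*}:x^{*}\in\Dom(A^{*})\}$ is $S_\infty^{*}$-invariant (since $S_\infty^{*}(t)i_\infty^{*}=i_\infty^{*}S^{*}(t)$ and $\Dom(A^{*})$ is $S^{*}$-invariant) and dense in $\cH_\infty$, hence a core for $A_\infty^{*}$, and that $i_Q^{*}(\Dom(A^{*}))$ is dense in $\cH_Q$.

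\emph{Step 3: (i)$\Rightarrow$(ii).} Substituting $h=i_\infty^{*}x^{*}$ into the bound of Step 1 and inserting the identities of Step 2 yields $|\sk{Q_\infty A^{*}x^{*}}{x^{*}}|\le\tfrac12(1+\cot\theta)\Nor{i_Q^{*}x^{*}}_{\cH_Q^\C}^{2}$ for all $x^{*}\in\Dom(A^{*})$; polarising the sesquilinear form $(x^{*},y^{*})\mapsto\sk{Q_\infty A^{*}x^{*}}{y^{*}}$ on the complexification upgrades this to $|\sk{Q_\infty A^{*}x^{*}}{y^{*}}|\le(1+\cot\theta)\Nor{i_Q^{*}x^{*}}_{\cH_Q^\C}\Nor{i_Q^{*}y^{*}}_{\cH_Q^\C}$. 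In particular $\ker Q\cap\Dom(A^{*})\subseteq\ker(Q_\infty A^{*})$ (because $\ker i_Q^{*}=\ker Q$ and $\Dom(A^{*})$ is weak${}^{*}$-dense in $E^{*}$), so the form descends to a bounded sesquilinear form on the dense subspace $i_Q^{*}(\Dom(A^{*}))$ of $\cH_Q^\C\times\cH_Q^\C$, and is therefore represented by a unique operator $B_Q^\C\in\cB(\cH_Q^\C)$; its real restriction $B_Q\in\cB(\cH_Q)$ satisfies $i_Q B_Q i_Q^{*}=Q_\infty A^{*}$ on $\Dom(A^{*})$, and uniqueness follows from density. The symmetric-part identity of Step 2 then reads $\Re\sk{B_Q h}{h}_{\cH_Q^\C}=\tfrac12\Nor{h}^{2}$, i.e. $B_Q+B_Q^{*}=I$ (equivalently, this is the Lyapunov equation conjugated by $i_Q$ and $i_Q^{*}$).

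\emph{Step 4: (ii)$\Rightarrow$(i), and the main obstacle.} Conversely, $B_Q+B_Q^{*}=I$ forces the symmetric part of $B_Q$ to equal $\tfrac12 I$, so $e^{\pm i\theta}B_Q$ is accretive on $\cH_Q$ as soon as $\cos\theta>\nor{B_Q-B_Q^{*}}\sin\theta$; running the identities of Step 2 in reverse transfers this to $\Re\sk{e^{\pm i\theta}A_\infty^{*}h}{h}_{\cH_\infty^\C}\ge0$ on the core $\{i_\infty^{*}x^{*}\}$, hence — using the core property and closedness of $A_\infty^{*}$ — on all of $\Dom(A_\infty^{*})$; thus $(S_\infty^{*}(t)^\C)$ is contractive analytic on $\bS_\theta$ and (i) follows via Step 1. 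The part I expect to be hardest is the domain bookkeeping with the (in general unbounded) operators $A$, $A^{*}$, $A_\infty$, $A_\infty^{*}$: establishing the intertwining relations on exactly the right domains, checking that $\{i_\infty^{*}x^{*}:x^{*}\in\Dom(A^{*})\}$ is genuinely a core for $A_\infty^{*}$, and justifying that the diagonal quadratic-form inequality both polarises and descends to a bounded form on $\cH_Q$. It is precisely here that the Lyapunov equation is indispensable: it is what makes the diffusion operator $Q$ appear inside the numerical range of $A_\infty^{*}$, and it also disposes of the possibly degenerate case $\ker Q\neq\{0\}$ by showing that $Q_\infty A^{*}$ still factors through $\cH_Q$.
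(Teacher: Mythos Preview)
The paper does not give its own proof of this proposition; it is quoted directly from \cite[Proposition~2.1 and Lemma~2.2]{MvN} and used as a black box. Your sketch is a faithful and essentially correct reconstruction of how such a result is proved: the reduction of analyticity of $(T(t))$ on $L^{2}(\gamma_\infty)$ via second quantisation to contractive analyticity of $(S_\infty^{*}(t)^\C)$ on $\cH_\infty^\C$ is exactly the mechanism the paper itself records just above the proposition (citing \cite[Theorem~8.1]{GvN}); the bridge identities $A_\infty^{*}i_\infty^{*}=i_\infty^{*}A^{*}$ on $\Dom(A^{*})$ and $\Re\sk{Q_\infty A^{*}x^{*}}{x^{*}}=\tfrac12\sk{Qx^{*}}{x^{*}}$ from the Lyapunov equation are the right tools; and the construction of $B_Q$ by polarising the diagonal bound and descending to a bounded sesquilinear form on $i_Q^{*}(\Dom(A^{*}))\subset\cH_Q$ is the standard route. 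The converse via accretivity of $e^{\pm i\theta}B_Q$ and Lumer--Phillips is also correct.

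The caveats you yourself identify are the only places where care is needed: that $i_\infty^{*}(\Dom(A^{*}))$ is a genuine core for $A_\infty^{*}$ (density follows from weak$^{*}$-density of $\Dom(A^{*})$ and weak$^{*}$-to-weak continuity of $i_\infty^{*}$, invariance from the adjoint intertwining), and that the polarised form really factors through $i_Q^{*}$ (your kernel inclusion handles this). None of this constitutes a gap in the strategy; it is exactly the bookkeeping that \cite{MvN} carries out.
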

\noindent{\bf Assumption.} Being interested in holomorphic functional calculus, in the rest of this paper we will always assume that $(T(t))_{t>0}$ is analytic on $L^{2}(\gamma_{\infty})$, meaning that it has a contractive analytic extension on some sector of positive angle of the complex plane.

\subsection*{Directional gradient.} It can be shown \cite[Lemma 5.2]{GGvN} that ${\rm N}(i^{*}_{\infty})\subseteq {\rm N}(i^{*}_{Q})$. Therefore, the densely defined operator
\begin{equation}\label{I: eq: def of V}
V: i^{*}_{\infty}(E^{*})\subseteq H_{\infty}\rightarrow H_{Q}; \quad Vi^{*}_{\infty}x^{*}=i^{*}_{Q}x^{*},\quad x^{*}\in E^{*}
\end{equation}
is well defined. Moreover, since $(T(t))_{t>0}$ is analytic, $V$ is closable \cite[Proposition~8.7]{GvN}. We still denote by $V$ its closure. We have the following decomposition of $A^{*}_{\infty}$ \cite[ Theorem 2.16]{M}:
\begin{equation}\label{I: eq: A*infty}
A^{*}_{\infty}=V^{*}B_{Q}V,\quad \Dom(A^{*}_{\infty})=\{h\in\Dom(V): B_{Q}Vh\in\Dom(V^{*})\};
\end{equation}
i.e. $A^{*}_{\infty}$ is the operator associated with the bilinear form
$$
{\mathfrak b}(g,h)=\langle B_{Q}Vg,Vh\rangle_{\cH_{Q}},\quad \Dom({\mathfrak b})=\Dom(V).
$$
For each $r\in [1,\infty)$ the directional gradient in the direction of $V$ is the densely defined operator
$$
D_{V}:\cF C^{1}_{b}(E;\Dom(V))\subseteq L^{r}(\gamma_{\infty})\rightarrow L^{r}(\gamma_{\infty};\cH^{\C}_{Q}),
$$ 
defined by the rule 
\begin{equation}\label{I: eq: DV}
D_{V}f(x)=\sum^{n}_{j=1}\partial_{j}\varphi(\phi_{h_{1}}(x),\dots,\phi_{h_{n}}(x))\otimes Vh_{j},
\end{equation}
whenever $f$ is of the form \eqref{I: eq: cilindrica}.
\begin{notation} For every $r\in (1,\infty)$, the operator $D_{V}$ is closable; see \cite[Theorem 3.5]{GGvN} and \cite[Proposition~8.7]{GvN}. We still denote by $D_{V}$ its closure.  If $1<r<\infty$, denote by $\Dom_{r}(D_{V})$ the domain of $D_{V}$ as a closed operator from $L^{r}(\gamma_{\infty})$ to $L^{r}(\gamma_{\infty};\cH^{\C}_{Q})$, and by $D_{r}(D^{*}_{V})$ the domain of its adjoint. 
\end{notation}

\subsection*{Divergence form for the Ornstein-Uhlenbeck operator.} Recall that $-\oL_{r}$ denotes Ornstein-Uhlembeck operator on $L^{r}(\gamma_{\infty})$, $1<r<\infty$. We still denote by $B_{Q}$ the complexification of the operator in Proposition~\ref{I: p: 2}.
\begin{proposition}[{\cite[Theorem 2.3, Proposition~2.4]{MvN} and \cite[Lemma~4.8]{MV}}]
\label{I: p: 4}
The generator $\oL_{2}$ coincides with the operator associated with the densely defined, closed, continuous and accretive sesquilinear form
$$
{\mathfrak a}(f,g):=\int_{E}\sk{B_{Q}D_{V}f}{D_{V}g}_{\cH^\C_{Q}}\wrt\gamma_{\infty},\quad \Dom({\mathfrak a})=\Dom_{2}(D_{V}).
$$
In other terms, we have
\begin{equation}\label{I: eq: Divform}
\oL_{2}:=D^{*}_{V}B_{Q}D_{V}, \quad \Dom(\oL_{2})=\{f\in \Dom_{2}(D_{V}):\ B_{Q}D_{V}f\in \Dom_{2}(D^{*}_{V})\}.
\end{equation}
If $1<r<\infty$, then $\cF C^{\infty}_{b}(E;\Dom(A^{*}_{\infty}))$ is a core for $\oL_{r}$.
\end{proposition}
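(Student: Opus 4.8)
The plan is to proceed in three stages: first verify that $\mathfrak a$ has the stated form-theoretic properties and read off the operator $L_{\mathfrak a}$ it defines; then identify $L_{\mathfrak a}$ with $\oL_2$ by exhibiting an operator core of $\oL_2$ on which both operators agree; and finally deduce the core statement for general $r\in(1,\infty)$ by a semigroup-invariance argument.

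First I would settle the properties of $\mathfrak a$. Density of $\Dom(\mathfrak a)=\Dom_2(D_V)$ is immediate since it contains $\cF C^1_b(E;\Dom(V))$, which is dense in $L^2(\gamma_\infty)$ because $\Dom(A^*_\infty)\subseteq\Dom(V)$ is dense in $\cH_\infty$; continuity follows from the boundedness of $B_Q$ and Cauchy--Schwarz in $L^2(\gamma_\infty;\cH^\C_Q)$. By \eqref{I: eq: Lyapunov} the symmetric part of $B_Q$ equals $\frac{1}{2}I$, so $\Re\mathfrak a(f,f)=\frac{1}{2}\|D_Vf\|^2_{L^2(\gamma_\infty;\cH^\C_Q)}$; this gives strict accretivity and shows that the form norm $\|f\|^2_{L^2}+\Re\mathfrak a(f,f)$ is equivalent to the graph norm of $D_V$ as a closed operator on $L^2(\gamma_\infty)$, so $\mathfrak a$ is closed. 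Standard sesquilinear-form theory then produces an m-accretive operator $L_{\mathfrak a}$ whose domain is $\{f\in\Dom_2(D_V):\ B_QD_Vf\in\Dom_2(D^*_V)\}$, with $L_{\mathfrak a}f=D^*_VB_QD_Vf$; this is exactly the description in \eqref{I: eq: Divform}.

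Next I would identify $L_{\mathfrak a}$ with $\oL_2$. Put $\cD=\cF C^\infty_b(E;\Dom(A^*_\infty))$; note $\Dom(A^*_\infty)\subseteq\Dom(V)$, so $\cD\subseteq\Dom_2(D_V)$. For $f=\varphi(\phi_{h_1},\dots,\phi_{h_n})\in\cD$, Mehler's formula \eqref{eq: Mehler inf} exhibits $T(t)f$ as a cylindrical function with directions $S^*_\infty(t)h_j$ and a $C^\infty_b$ profile; since $-A^*_\infty$ generates $(S^*_\infty(t))_{t>0}$, one has $S^*_\infty(t)h_j\in\Dom(A^*_\infty)$, so $\cD$ is invariant under $(T(t))_{t>0}$. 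Differentiating $t\mapsto T(t)f$ at $t=0$ in $L^2(\gamma_\infty)$, exactly as in the finite-dimensional computation yielding \eqref{d: ou}, shows $\cD\subseteq\Dom(\oL_2)$ and produces an explicit formula for $\oL_2f$; since $\cD$ is moreover dense and $(T(t))$-invariant, it is a core for $\oL_2$. Then, using the definition \eqref{I: eq: DV} of $D_V$, the decomposition $A^*_\infty=V^*B_QV$ from \eqref{I: eq: A*infty}, and the infinite-dimensional divergence (integration-by-parts) formula for $D^*_V$ applied to fields $u\otimes k$ with $k\in\Dom(V^*)$, I would check that for $f\in\cD$ one has $B_QD_Vf\in\Dom_2(D^*_V)$ and $D^*_VB_QD_Vf=\oL_2f$; hence $L_{\mathfrak a}$ extends $\oL_2|_{\cD}$, so $L_{\mathfrak a}\supseteq\overline{\oL_2|_{\cD}}=\oL_2$, and since $\oL_2$ is m-accretive while $L_{\mathfrak a}$ is accretive the two coincide, which is \eqref{I: eq: Divform}. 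A more structural route, closer to \cite{MvN}, would instead invoke functoriality of second quantisation for forms: the form $g,h\mapsto\langle B_QVg,Vh\rangle_{\cH_Q}$ associated with $A^*_\infty$ lifts to a closed form on the Fock space $\cong L^2_\R(\gamma_\infty)$ whose complexification generates $\Gamma^\C(S^*_\infty(t)^\C)=T(t)$, and a first-chaos computation identifies this lifted form with $\mathfrak a$.

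Finally, for the core statement with general $r\in(1,\infty)$ the same invariance argument applies: $\cD$ is dense in $L^r(\gamma_\infty)$ by the density of cylindrical functions, it is contained in $\Dom(\oL_r)$ by the analogous Mehler computation, and it is invariant under the strongly continuous contraction semigroup $(T(t))_{t>0}$ on $L^r(\gamma_\infty)$; hence $\cD$ is a core for $\oL_r$. I expect the main obstacle to be the computation in the second stage: making the $t\to0^+$ expansion of Mehler's formula rigorous in $L^2(\gamma_\infty)$, and especially establishing the infinite-dimensional integration-by-parts identity for $D^*_V$ on fields $u\otimes k$ together with the membership $B_QD_Vf\in\Dom_2(D^*_V)$ for $f\in\cD$ --- this is precisely where the hypothesis $h_j\in\Dom(A^*_\infty)$ (so that $B_QVh_j\in\Dom(V^*)$) and the standing analyticity assumption (which makes $V$, hence $D_V$, closable) are genuinely used.
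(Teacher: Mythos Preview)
The paper does not supply its own proof of this proposition: it is quoted verbatim from \cite[Theorem~2.3, Proposition~2.4]{MvN} and \cite[Lemma~4.8]{MV}, and the text simply moves on to the dual statement. So there is no in-paper argument to compare against.

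That said, your outline is a faithful reconstruction of the standard route in those references. The first stage (form properties via \eqref{I: eq: Lyapunov}) is exactly right. For the identification $L_{\mathfrak a}=\oL_2$, the second ``structural'' route you mention --- lifting the form $\mathfrak b$ associated with $A^*_\infty$ through second quantisation and invoking \eqref{I: eq: sqz*} --- is closer in spirit to \cite{MvN} than the direct Mehler-differentiation argument, and it sidesteps the delicate $t\to 0^+$ limit you flag as the main obstacle. For the core statement on $L^r$, the semigroup-invariance argument you give is precisely the content of \cite[Lemma~4.8]{MV}; one small point worth checking carefully is that Mehler's formula \eqref{eq: Mehler inf} as written produces a profile depending on $2n$ variables (the $S^*_\infty(t)h_j$ and the $\sqrt{I-S_\infty(t)S^*_\infty(t)}\,h_j$ directions), and only the first $n$ of these lie in $\Dom(A^*_\infty)$, so the invariance of $\cD=\cF C^\infty_b(E;\Dom(A^*_\infty))$ under $T(t)$ requires integrating out the $y$-variable first and noting the resulting one-variable profile is again $C^\infty_b$.
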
 
The theory of sesquilinear forms \cite{K,O} implies that $(\oL_{2})^{*}$ and $A_{\infty}$ are associated, respectively, with the adjoint sequilinear form ${\mathfrak a^{*}}$ and the adjoint bilinear form ${\mathfrak b^{*}}$. Moreover, $(-\oL_{2})^{*}$ is the generator of the semigroup $(\Gamma^{\C}(S_{\infty}(t)^{\C}))_{t>0}$. The following result is now a consequence of \cite[Lemma~4.8]{MV}.

\begin{proposition}[\cite{MvN,MV}]\label{I: p: 4 dual}
The generator $(\oL_{2})^{*}$ coincides with the operator associated with the densely defined, closed, continuous and accretive sesquilinear form
$$
{\mathfrak a^{*}}(f,g):=\int_{E}\sk{B^{*}_{Q}D_{V}f}{D_{V}g}_{\cH^\C_{Q}}\wrt\gamma_{\infty},\quad \Dom({\mathfrak a^{*}})=\Dom_{2}(D_{V}).
$$
In other terms, we have
\begin{equation}\label{I: eq: Divform}
(\oL_{2})^{*}:=D^{*}_{V}B^{*}_{Q}D_{V}, \quad \Dom((\oL_{2})^{*})=\{f\in \Dom_{2}(D_{V}):\ B^{*}_{Q}D_{V}f\in \Dom_{2}(D^{*}_{V})\}.
\end{equation}
If $1<r<\infty$, then $\cF C^{\infty}_{b}(E;\Dom(A_{\infty}))$ is a core for $(\oL_{r})^{*}$. 
\end{proposition}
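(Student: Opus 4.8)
The plan is to derive both assertions as the \emph{dual} counterparts of Proposition~\ref{I: p: 4}, using that the pair $(\mathfrak{a}^{*},(\oL_{2})^{*})$ stands in relation to $B_{Q}^{*}$ exactly as $(\mathfrak{a},\oL_{2})$ stands in relation to $B_{Q}$. Recall that by Proposition~\ref{I: p: 4} the operator $\oL_{2}$ is the one associated, in the sense of the theory of sesquilinear forms \cite{K,O}, with the densely defined, closed, continuous and accretive form $\mathfrak{a}(f,g)=\int_{E}\sk{B_{Q}D_{V}f}{D_{V}g}_{\cH^{\C}_{Q}}\wrt\gamma_{\infty}$ on $\Dom_{2}(D_{V})$. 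As already noted above, the Hilbert space adjoint $(\oL_{2})^{*}$ is then the operator associated with the adjoint form $\mathfrak{a}^{*}(f,g):=\overline{\mathfrak{a}(g,f)}$, which has the same domain $\Dom_{2}(D_{V})$; a one-line computation based on $\overline{\sk{B_{Q}u}{v}_{\cH^{\C}_{Q}}}=\sk{B_{Q}^{*}v}{u}_{\cH^{\C}_{Q}}$ gives $\mathfrak{a}^{*}(f,g)=\int_{E}\sk{B_{Q}^{*}D_{V}f}{D_{V}g}_{\cH^{\C}_{Q}}\wrt\gamma_{\infty}$. Since $\Re\mathfrak{a}^{*}=\Re\mathfrak{a}$ and $\mod{\Im\mathfrak{a}^{*}}=\mod{\Im\mathfrak{a}}$ on $\Dom_{2}(D_{V})$, the form $\mathfrak{a}^{*}$ inherits from $\mathfrak{a}$ all four of those properties, and the first representation theorem identifies the operator associated with $\mathfrak{a}^{*}$ — exactly as in the proof of Proposition~\ref{I: p: 4}, with $B_{Q}$ replaced by $B_{Q}^{*}$ throughout — with $D^{*}_{V}B^{*}_{Q}D_{V}$ on the domain $\{f\in\Dom_{2}(D_{V}):B^{*}_{Q}D_{V}f\in\Dom_{2}(D^{*}_{V})\}$, which is the asserted divergence-form representation \eqref{I: eq: Divform}.

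For the core statement I would invoke the same symmetry. By \eqref{I: eq: Lyapunov} one has $B_{Q}^{*}\in\cB(\cH_{Q})$ with $B_{Q}^{*}+B_{Q}=I$, so passing from $B_{Q}$ to $B_{Q}^{*}$ yields a configuration satisfying the same structural hypotheses as the original one; moreover $(-\oL_{2})^{*}$ generates $(\Gamma^{\C}(S_{\infty}(t)^{\C}))_{t>0}$ and, by the dual of \eqref{I: eq: A*infty}, $A_{\infty}=V^{*}B_{Q}^{*}V$ is the operator associated with $\mathfrak{b}^{*}$, i.e. $A_{\infty}$ plays in this dual configuration precisely the role that $A^{*}_{\infty}$ plays in the original one. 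Hence \cite[Theorem~2.3 and Proposition~2.4]{MvN} together with \cite[Lemma~4.8]{MV} — equivalently, the core part of Proposition~\ref{I: p: 4} — apply with $B_{Q}$ replaced by $B_{Q}^{*}$ and $A^{*}_{\infty}$ replaced by $A_{\infty}$, giving that $\cF C^{\infty}_{b}(E;\Dom(A_{\infty}))$ is a core for the generator of $(\Gamma^{\C}(S_{\infty}(t)^{\C}))_{t>0}$ on $L^{r}(\gamma_{\infty})$. Since that generator is $(\oL_{r/(r-1)})^{*}$ and $r\mapsto r/(r-1)$ is an involution of $(1,\infty)$, the claim follows for every $1<r<\infty$.

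The only point that needs a little care — and the reason the statement is attributed to \cite{MvN,MV} — is that the results on cores in \cite{MvN,MV} are phrased for the Ornstein-Uhlenbeck semigroup $(T(t))=(\Gamma^{\C}(S^{*}_{\infty}(t)^{\C}))_{t>0}$ built from a semigroup on $E$, whereas $(\Gamma^{\C}(S_{\infty}(t)^{\C}))_{t>0}$ need not arise in that way; what legitimises the transfer is that those results, like the identity \eqref{I: eq: A*infty} and the divergence-form representation, are intrinsic to the reproducing-kernel-Hilbert-space picture attached to $V$ and $B_{Q}$, hence apply verbatim to the pair $(V,B_{Q}^{*})$. Everything else — the computation of $\mathfrak{a}^{*}$ and the verification that it inherits accretivity, closedness and continuity from $\mathfrak{a}$ — is routine.
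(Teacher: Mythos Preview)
Your proposal is correct and follows essentially the same line as the paper, which derives the first part from the standard theory of adjoint sesquilinear forms \cite{K,O} and obtains the core statement by observing that $(-\oL_{2})^{*}$ generates $(\Gamma^{\C}(S_{\infty}(t)^{\C}))_{t>0}$ and then invoking \cite[Lemma~4.8]{MV} for the dual configuration. Your write-up is more detailed than the paper's one-sentence justification, and your third paragraph makes explicit the intrinsic (RKHS-level) nature of the cited results that the paper leaves implicit, but the underlying argument is the same.
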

\subsection*{Sector of analyticity of the Ornstein-Uhlenbeck semigroup.} Assume \Hmu\ and suppose that the Ornstein-Uhlenbeck semigroup $(T(t))_{t>0}$ is analytic. Let $\theta^{*}_{2}=\theta^{*}_{2}(B_{Q})$ be the angle defined in \eqref{eq: N2 bis}, but with $B=B_{Q}$ and $\cH=\cH_{Q}$.

By combining \eqref{eq: theta and norm} with the Lyapunov equation \eqref{I: eq: Lyapunov}, we obtain that
\begin{equation}\label{I: eq: theta*2}
\theta^{*}_{2}=\theta^{*}_{2}(B_{Q})=\arctan\|B_{Q}-B^{*}_{Q}\|,
\end{equation}
and $\sigma(B_{Q})=1/2+\sigma((B_{Q})_{2})$, where $\sigma((B_{Q})_{2})\subset i\R$. By the spectral theorem for normal operators, the spectral radius of $(B_{Q})_{2}$ coincides with $\|B_{Q}-B^{*}_{Q}\|/2$. Therefore, $\theta^{*}_{2}$ coincides with the spectral angle of $B_{Q}$ on $\cH^\C_{Q}$; see also \cite[Remark 2]{CFMP}.
\begin{notation}
For $1<r<\infty$ define $\theta^{*}_{r}$ as in \eqref{eq: 1 theta r}. Recall the notation $\theta_{r}=\pi/2-\theta^{*}_{r}$.
\end{notation}
The following result extends to the infinite dimensional setting \cite[Theorem 2, Remark 6]{CFMP} (see Proposition~\ref{p: OU on Lp}) and removes the nondegeneracy assumption
on the diffusion operator $Q$.
\begin{proposition}[{\cite[Theorem~3.4 and Theorem~3.5]{MvN}}]\label{I: p: OU on Lp}
Suppose that $1<r<\infty$. Then,
\begin{itemize}
\item[{\rm (i)}] $(T(t))_{t>0}$ extends to an analytic contraction semigroup on $L^{r}(\gamma_{\infty})$ in the sector $\bS_{\theta_{r}}$.
\item[{\rm (ii)}] If $(T(t))_{t>0}$ extends to a bounded analytic semigroup on $L^{r}(\gamma_{\infty})$ in the sector $\bS_{\theta}$, for some $\theta\in (0,\pi/2)$, then $\theta\leq\theta_{r}$.
\end{itemize}
\end{proposition}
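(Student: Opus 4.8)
The plan is to prove (i) and (ii) separately, by different methods. Part~(i) will follow from the divergence-form representation of $\oL_{r}$ (Proposition~\ref{I: p: 4}), the pointwise convexity estimate of Proposition~\ref{p: convexity of Fr} applied over the reproducing kernel Hilbert space $\cH_{Q}$, and a generalised Lumer--Phillips argument; this is the infinite-dimensional counterpart of (and is simpler than) the verification of the hypothesis of Theorem~\ref{t: heat flow}. Part~(ii), the optimality, is the delicate point; the plan there is to reduce, via Mehler's formula \eqref{eq: Mehler inf}, to the finite-dimensional sharp result of Chill, Fa\v{s}angov\'a, Metafune and Pallara (Proposition~\ref{p: OU on Lp}(ii)).

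For (i): under the standing analyticity assumption the operator $B_{Q}\in\cB(\cH_{Q})$ of Proposition~\ref{I: p: 2} is defined and $B_{Q}+B^{*}_{Q}=I$, so $(B_{Q})_{{\bf s}}=I/2$ and $\|B_{Q}-B^{*}_{Q}\|=\tan\theta^{*}_{2}$ by \eqref{I: eq: theta*2}. Fix first $r\geq 2$. By Proposition~\ref{I: p: 4}, on the core $\cF C^{\infty}_{b}(E;\Dom(A^{*}_{\infty}))$ one has $\oL_{r}=D^{*}_{V}B_{Q}D_{V}$, so for $f$ in this core an integration by parts gives
$$
\int_{E}\oL_{r}f\cdot\bar f|f|^{r-2}\wrt\gamma_{\infty}=\int_{E}\sk{B_{Q}D_{V}f}{D_{V}(f|f|^{r-2})}_{\cH^\C_{Q}}\wrt\gamma_{\infty}.
$$
By the computation in the proof of Proposition~\ref{p: convexity of Fr} (the identities \eqref{eq: 1 *}--\eqref{eq: 2 *} passed to the limit $n\to\infty$), the integrand equals, at $\gamma_{\infty}$-almost every $x$, $\tfrac1r\big(H^{B_{Q}}_{F_{r}}[\cV(f(x));\cV(D_{V}f(x))]-i\,H^{iB_{Q}}_{F_{r}}[\cV(f(x));\cV(D_{V}f(x))]\big)$, with $D_{V}f(x)$ regarded as an element of $\cH^\C_{Q}$. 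Proposition~\ref{p: convexity of Fr}, applied with $\cH=\cH_{Q}$ and $B=B_{Q}$, shows that the real part of this quantity is nonnegative and its imaginary part bounded in modulus by $\tan\theta^{*}_{r}$ times its real part; hence $\int_{E}\oL_{r}f\cdot\bar f|f|^{r-2}\wrt\gamma_{\infty}\in\overline{\bS}_{\theta^{*}_{r}}$. A density argument --- approximating $f\in\Dom(\oL_{r})$ in the graph norm by core elements and using the continuity of $f\mapsto\bar f|f|^{r-2}$ from $L^{r}(\gamma_{\infty})$ to $L^{r/(r-1)}(\gamma_{\infty})$ together with Lemma~\ref{l: measure theory} --- extends this numerical-range inclusion to all of $\Dom(\oL_{r})$; since $-\oL_{r}$ already generates a strongly continuous contraction semigroup, the generalised Lumer--Phillips theorem (see e.g. \cite[Theorem~1.54]{O}) yields that $-\oL_{r}$ generates an analytic contraction semigroup on $\bS_{\theta_{r}}$. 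The range $1<r<2$ then follows by applying this to the adjoint Ornstein--Uhlenbeck operator $(\oL_{2})^{*}=D^{*}_{V}B^{*}_{Q}D_{V}$ (Proposition~\ref{I: p: 4 dual}) with exponent $r/(r-1)\geq 2$ and dualising, using $\theta^{*}_{2}(B^{*}_{Q})=\theta^{*}_{2}$ and $\theta^{*}_{r/(r-1)}=\theta^{*}_{r}$.

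For (ii): the case $r=2$ is the $L^{2}$-optimality recalled before Proposition~\ref{I: p: 2} (it reduces to that of $(S^{*}_{\infty}(t)^{\C})_{t>0}$), so assume $r\neq 2$, whence $\theta_{r}<\theta_{2}$. Suppose $(T(t))_{t>0}$ extends to a bounded analytic semigroup on $L^{r}(\gamma_{\infty})$ in $\bS_{\theta}$ with $\theta\in(0,\pi/2)$; we must show $\theta\leq\theta_{r}$. Since $\theta_{r}<\theta_{2}$, it suffices to prove that for every $\psi\in(\theta_{r},\theta_{2})$ the Mehler extension $T(te^{i\psi})$ --- which is well defined, being bounded on $L^{2}(\gamma_{\infty})$ --- is an \emph{unbounded} operator on $L^{r}(\gamma_{\infty})$: this contradicts bounded analyticity on any $\bS_{\theta}$ with $\theta>\theta_{r}$. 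This shrinkage of the angle below $\pi/2$ is genuinely non-Hilbertian --- it already occurs for the one-dimensional symmetric Ornstein--Uhlenbeck operator (Epperson \cite{E}), for which $\theta^{*}_{2}=0$ but $\theta^{*}_{r}>0$ --- so the plan is to exhibit the obstruction by reduction to finite dimensions. Given $\psi$ and $\epsilon>0$ small (depending on $\psi-\theta_{r}$), use $\theta^{*}_{2}=\arctan\|B_{Q}-B^{*}_{Q}\|$ to pick a unit vector $v\in\cH_{Q}$ with $\|(B_{Q}-B^{*}_{Q})v\|\geq(1-\epsilon)\tan\theta^{*}_{2}$, let $G$ be the finite-dimensional subspace of $\cH_{Q}$ generated by $v$ and finitely many of its images under the $S_{\infty}(t)$ and $B_{Q}$, and observe from \eqref{eq: Mehler inf} that on cylindrical functions of the $\phi_{h}$, $h\in G$, the operator $T(te^{i\psi})$ acts as a perturbation of a finite-dimensional Ornstein--Uhlenbeck semigroup whose $L^{2}$-analyticity angle is as close to $\theta_{2}$ as we wish; for such an operator Proposition~\ref{p: OU on Lp}(ii) (in the symmetric subcase, Epperson's theorem \cite{E}) forces an infinite $L^{r}\to L^{r}$ norm once $\psi>\theta_{r}$, which is the desired contradiction.

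\textbf{Main obstacle.} The hard part is the finite-dimensional reduction in (ii): when $(S_{\infty}(t))_{t>0}$ admits no genuine finite-dimensional invariant subspace of $\cH_{Q}$ realising the angle $\theta^{*}_{2}$, one must control the error incurred by replacing the almost-invariant subspace $G$ by an invariant one --- or else carry out the norm-blow-up test-function computation directly in infinite dimensions, using \eqref{eq: Mehler inf} and the analytic continuation of $(S^{*}_{\infty}(t)^{\C})_{t>0}$ on $\bS_{\theta_{2}}$. Part~(i), by contrast, is routine once Proposition~\ref{p: convexity of Fr} and the divergence-form representation are in hand; the only steps needing care there are the integration by parts and the passage from the core to $\Dom(\oL_{r})$.
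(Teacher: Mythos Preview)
The paper does not give its own proof of this proposition; it is quoted verbatim from \cite[Theorems~3.4 and~3.5]{MvN}. So there is no ``paper's proof'' to compare against, only the external one. With that caveat:

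Your argument for (i) is correct and is, in essence, the Maas--van~Neerven proof: divergence form \eqref{I: eq: Divform} plus the numerical-range estimate of Proposition~\ref{p: convexity of Fr} plus Lumer--Phillips. (Note, incidentally, that the present paper recovers (i) independently as a by-product of Theorem~\ref{t: heat flow}\,(i) applied in the proof of Theorem~\ref{I: t: fc principal}.) The only cosmetic point is that you should be slightly careful with the core: for $r\geq 2$ the function $f|f|^{r-2}$ need not belong to $\Dom_{2}(D_{V})$ even for cylindrical $f$, but it does lie in $\Dom_{r'}(D_{V})$, which is what the integration by parts actually requires.

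Part (ii) has a genuine gap, which you yourself flag. The finite-dimensional reduction via ``almost-invariant'' subspaces $G\subset\cH_{Q}$ does not go through: Mehler's formula \eqref{eq: Mehler inf} is written over $\cH_{\infty}$, not $\cH_{Q}$, and $T(z)$ preserves cylindrical functions over a subspace only if that subspace is genuinely $S^{*}_{\infty}(z)$-invariant; a perturbative argument would have to control errors in $L^{r}$ norm, which is exactly the quantity you are trying to prove blows up. The actual argument in \cite{MvN} avoids this entirely. It exploits the second-quantisation identity \eqref{I: eq: sqz*}, $T(z)=\Gamma^{\C}(S^{*}_{\infty}(z)^{\C})$, together with the sharp criterion (going back to Epperson and to Janson's work on second quantisation) for when $\Gamma^{\C}(C)$ is bounded on $L^{r}(\gamma_{\infty})$ in terms of the operator $C$ alone. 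Concretely, testing $T(z)$ on exponentials $\exp(\lambda\phi_{h})$ for $h\in\cH_{\infty}$ reduces the $L^{r}$-boundedness of $T(z)$ to a family of scalar Gaussian integrals, and these diverge precisely when $z\notin\overline{\bS}_{\theta_{r}}$; this is a direct infinite-dimensional computation, not a reduction to Proposition~\ref{p: OU on Lp}\,(ii). Your sketch does not reach this, and the obstacle you identify is real.
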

As a consequence of Proposition~\ref{I: p: OU on Lp} we have $\omega(\oL_{r})=\theta^{*}_{r}$, $1<r<\infty$.
\section{$H^{\infty}$-calculus for infinite dimensional Ornstein-Uhlenbeck operators}\label{I: s: proof of mult thm}
In this section we fix a real separable Banach space $E$, a nonnegative symmetric operator $Q\in\cB(E^{*},E)$ and a strongly continuous semigroup $(S(t))_{t>0}$ on $E$. We assume hypothesis \Hmu\ and we suppose that the Ornstein-Uhlenbeck semigroup $(T(t))_{t>0}$ associated with $Q$ and $(S(t))_{t>0}$ is analytic. Let $B_{Q}$ be the operator in Proposition~\ref{I: p: 2}, and let $\theta^{*}_{2}=\theta^{*}_{2}(B_{Q})$ be the angle defined in \eqref{I: eq: theta*2}. For every $r\in (1,\infty)$ let $\theta^{*}_{r}=\theta^{*}_{r}(B_{Q})$ be defined by means of \eqref{eq: 1 theta r}.

\begin{theorem}\label{I: t: fc principal}
Let $1<r<\infty$. Then, 
$$
\omega_{H^{\infty}}(\oL_{r})=\omega(\oL_{r})=\theta^{*}_{r}.
$$
Moreover, for every $\theta>\theta^{*}_{r}$ there exists $C>0$, which depends only on $r$, $\theta$ and $\theta^{*}_{2}$, such that
$$
\|m(\oL_{r})f\|_{r}\leq C\|m\|_{\theta}\|f\|_{r},\quad f\in L^{r}_{0}(\gamma_{\infty}),
$$
for all $m\in H^{\infty}(\bS_{\theta})$.
\end{theorem}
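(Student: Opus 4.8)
The plan is to transcribe the proof of Theorem~\ref{t: fc principal} almost verbatim, with the Euclidean gradient $\nabla$ replaced by the directional gradient $D_{V}$ of \eqref{I: eq: DV}, the diffusion matrix $B=Q_{\infty}A^{*}$ replaced by the operator $B_{Q}\in\cB(\cH_{Q})$ of Proposition~\ref{I: p: 2}, and $\R^{n}$ replaced by the reproducing kernel Hilbert space $\cH_{Q}$. This is possible precisely because Theorem~\ref{t: convexity} and Corollary~\ref{c: convexity} were stated for an \emph{arbitrary} real separable Hilbert space $\cH$: the Lyapunov identity \eqref{I: eq: Lyapunov}, $B_{Q}+B_{Q}^{*}=I$, makes $B_{Q}$ strictly accretive with $(B_{Q})_{{\bf s}}=I/2$, and the angle $\theta^{*}_{2}(B_{Q})$ appearing in Theorem~\ref{t: convexity} is exactly the one of \eqref{I: eq: theta*2}, so the whole convexity input is available with $\cH=\cH_{Q}$, $B=B_{Q}$. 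For the lower bound, combining \eqref{eq: *} with the identity $\omega(\oL_{r})=\theta^{*}_{r}$ (a consequence of Proposition~\ref{I: p: OU on Lp}) gives $\omega_{H^{\infty}}(\oL_{r})\geq\omega(\oL_{r})=\theta^{*}_{r}$, so only the matching upper bound needs proof.

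For the upper bound I would first reduce to $r=p\geq2$. By Proposition~\ref{I: p: 4} and Proposition~\ref{I: p: 4 dual}, $\oL_{2}=D_{V}^{*}B_{Q}D_{V}$ and $(\oL_{2})^{*}=D_{V}^{*}B_{Q}^{*}D_{V}$; writing $\oL^{*}$ for the Ornstein--Uhlenbeck operator $D_{V}^{*}B_{Q}^{*}D_{V}$ attached to $B_{Q}^{*}$, the theory of sesquilinear forms yields $(\oL_{r})^{*}=(\oL^{*})_{r/(r-1)}$, $1<r<\infty$, so it suffices to establish the asserted estimate on $L^{p}(\gamma_{\infty})$, $p\geq2$, simultaneously for $\oL_{p}$ and for $(\oL^{*})_{p}$. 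Since $\oL_{p}$ fails to be one-to-one — its null space is the constants and the projection $\oP_{p}$ onto it is \eqref{I: eq: pro} — I would apply Theorem~\ref{t: heat flow} to the one-to-one operators $\oL_{p}(I-\oP_{p})$ and $(\oL^{*})_{p}(I-\oP_{p})$; part~(iii) of that theorem then gives, for every $\theta'>\theta^{*}_{p}$, the bounded $H^{\infty}(\bS_{\theta'})$-calculus on $L^{p}(\gamma_{\infty})$ with constant depending only on $p$, $\theta'$ and $\theta^{*}_{2}$ — precisely the claim.

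Thus everything reduces to verifying the integral condition \eqref{eq: heat flow infinitesimal}. Fix $p\geq2$ and $0\leq\theta<\theta_{p}$, and choose $\delta\in(0,1)$ and the corresponding Bellman function $\cQ$ as in Theorem~\ref{t: convexity} with $\cH=\cH_{Q}$, $B=B_{Q}$. Since $\gamma_{\infty}$ is a probability measure, $\cF C^{\infty}_{b}(E;\Dom(A^{*}_{\infty}))$ is a core for $\oL_{p}$ and $\cF C^{\infty}_{b}(E;\Dom(A_{\infty}))$ a core for $(\oL^{*})_{p}$ (Propositions~\ref{I: p: 4}--\ref{I: p: 4 dual}), and the bounds of Proposition~\ref{p: estimates Bellman} together with the approximation Lemma~\ref{l: measure theory} reduce \eqref{eq: heat flow infinitesimal} to the case of cylindrical $f,g$ (with $\oA=\oL$ or $\oA=\oL^{*}$). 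For such $f,g$ and any mollifier $\psi_{\epsilon}$, an integration by parts based on the divergence-form representation of $\oL_{2}$ in Proposition~\ref{I: p: 4} gives, exactly as in \eqref{eq: by parts}, that the mollified left-hand side of \eqref{eq: heat flow infinitesimal} equals $\int_{E}H^{(e^{\pm i\theta}B_{Q},\,e^{\mp i\theta}B_{Q}^{*})}_{\cQ*\psi_{\epsilon}}[(f,g);(D_{V}f,D_{V}g)]\wrt\gamma_{\infty}$, the mollification being needed since $\cQ$ is only $C^{1}$ and the $\otimes I_{\cH_{Q}}$ of Notation~\ref{d: H^{M}} absorbing the fact that $D_{V}f$ is $\cH^{\C}_{Q}$-valued. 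Corollary~\ref{c: convexity} (with $\cH=\cH_{Q}$) bounds this from below by $a_{0}\int_{E}\nor{(B_{Q})_{{\bf s}}^{1/2}D_{V}f}\,\nor{(B_{Q})_{{\bf s}}^{1/2}D_{V}g}\wrt\gamma_{\infty}$; letting $\epsilon\to0^{+}$ by Proposition~\ref{p: estimates Bellman} and dominated convergence, and estimating $\mod{\int_{E}\oL f\cdot\overline{g}\wrt\gamma_{\infty}}\leq\int_{E}\mod{\sk{B_{Q}D_{V}f}{D_{V}g}_{\cH^{\C}_{Q}}}\wrt\gamma_{\infty}\leq(1+\tan\theta^{*}_{2})\int_{E}\nor{(B_{Q})_{{\bf s}}^{1/2}D_{V}f}\,\nor{(B_{Q})_{{\bf s}}^{1/2}D_{V}g}\wrt\gamma_{\infty}$ via \eqref{vsenoschnoe}, one obtains \eqref{eq: heat flow infinitesimal} with $C_{0}=a_{0}/(1+\tan\theta^{*}_{2})$; the identical computation with $B_{Q}^{*}$ in place of $B_{Q}$ handles $\oL^{*}$, completing the proof.

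The main obstacle — and the only point requiring real care beyond Section~\ref{s: proof of mult thm} — is the infinite-dimensional nature of the gradient: $D_{V}$ is merely a closed, densely defined operator into the vector-valued space $L^{p}(\gamma_{\infty};\cH^{\C}_{Q})$, so one must (a) justify the integration by parts above, which is why the cylindrical cores of Propositions~\ref{I: p: 4}--\ref{I: p: 4 dual} and the density Lemma~\ref{l: measure theory} are invoked, and (b) check that mollifying $\cQ$ is compatible with $D_{V}$ and with taking the Hessian (as encoded by $H^{(\cdot,\cdot)}_{\cQ*\psi_{\epsilon}}$). Both points go through because a cylindrical function depends on only finitely many Paley--Wiener variables $\phi_{h_{j}}$, so on such functions the computation takes place in finitely many real variables, while the directions $Vh_{j}\in\cH_{Q}$ are carried intact by the $\otimes I_{\cH}$ device of Notation~\ref{d: H^{M}} — which is exactly the feature of Theorem~\ref{t: convexity} put in place for this purpose. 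Everything else is bookkeeping identical to the finite-dimensional argument.
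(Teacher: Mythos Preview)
Your proposal is correct and follows essentially the same approach as the paper's own proof: reduction to $p\geq 2$, verification of \eqref{eq: heat flow infinitesimal} on the cylindrical cores of Propositions~\ref{I: p: 4}--\ref{I: p: 4 dual} via the divergence-form representation, mollification of $\cQ$, application of Corollary~\ref{c: convexity} with $\cH=\cH_{Q}$ and $B=B_{Q}$, and passage to the limit $\epsilon\to 0^{+}$. The paper's write-up is terser --- it handles only $\oL_{p}$ explicitly and relegates $\oL^{*}_{p}$ (hence the case $r<2$) to a one-line ``duality argument'' referring back to Section~\ref{s: proof of mult thm} --- but the mathematical content is identical to what you describe, including the key observation that $\partial_{\zeta}\cQ*\psi_{\epsilon}(f,g)$ and $\partial_{\eta}\cQ*\psi_{\epsilon}(f,g)$ land in $\cF C^{\infty}_{b}(E;\Dom(V))\subseteq\Dom_{2}(D_{V})$, which is what makes the integration by parts via the sesquilinear form legitimate.
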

\begin{proof}
The proof of the theorem is an adaptation of that of Theorem~\ref{t: fc principal}. First notice that by \eqref{I: eq: pro} we have ${\overline {\rm R}(\oL_{r})}=L^{r}_{0}(\gamma_{\infty})$.
Recall that, by Proposition~\ref{I: p: 4} and Proposition~\ref{I: p: 4 dual}, the space $\cF C^{\infty}_{b}(E;\Dom(A^{*}_{\infty}))$ is a core for $\oL_{r}$ and $\cF C^{\infty}_{b}(E;\Dom(A_{\infty}))$ is a core for $\oL^{*}_{r}$.

We first prove the theorem for $r=p> 2$. By arguing as in the proof of Theorem~\ref{t: fc principal} in Section~\ref{s: proof of mult thm}, we see that it suffices to prove that there exists $\delta>0$ such that the corresponding Bellman function $\cQ$, defined by \eqref{eq: defi Bellman}, satisfies the integral condition \eqref{eq: heat flow infinitesimal} in Theorem~\ref{t: heat flow} with $\oA=\oL_{p}$ for all $f\in \cF C^{\infty}_{b}(E;\Dom(A^{*}_{\infty}))$, all $g\in\cF C^{\infty}_{b}(E;\Dom(A_{\infty}))$ and every $\theta<\theta_{p}$.\\

Choose $\delta\in(0,1)$ and the corresponding $\cQ$ as in Theorem~\ref{t: convexity}. 
Fix $\epsilon>0$, $f\in\cF C^{\infty}_{b}(E;\Dom(A^{*}_{\infty}))$ and $g\in\cF C^{\infty}_{b}(E;\Dom(A_{\infty}))$. Since $\partial_\zeta \cQ*\psi_{\epsilon},\partial_\eta \cQ*\psi_{\epsilon}\in C^{\infty}(\C^{2},\C)$ and since by \eqref{I: eq: A*infty} we have that $\Dom(A^{*}_{\infty})\subseteq\Dom(V)$ and $\Dom(A_{\infty})\subseteq \Dom(V)$, it follows that
$$
\partial_\zeta \cQ*\psi_{\epsilon}(f,g)\in\cF C^{\infty}_{b}(E;\Dom(V))\subseteq \Dom_{2}(D_{V}),\quad \partial_\eta \cQ*\psi_{\epsilon}(f,g)\in\cF C^{\infty}_{b}(E;\Dom(V))\subseteq \Dom_{2}(D_{V}).
$$
Therefore, by Proposition~\ref{I: p: 4} and Proposition~\ref{I: p: 4 dual}, the right hand side of \eqref{eq: heat flow infinitesimal} with $\cQ$ replaced by $\cQ*\psi_{\epsilon}$ and with $\oA=\oL_{2}$ can be rewritten as
$$
\Re\int_{E}\left\{\sk{e^{\pm i\theta}B_{Q}D_{V}f}{D_{V}[\partial_\zeta \cQ*\psi_{\epsilon}(f,g)]}_{\cH^\C_{Q}}+\sk{e^{\mp i\theta}B^{*}_{Q}D_{V}g}{D_{V}[\partial_\eta \cQ*\psi_{\epsilon}(f,g)]}_{\cH^\C_{Q}}\right\}\wrt\gamma_{\infty}\,.
$$
It follows from \eqref{I: eq: DV} and from Definition~\ref{d: H^{M}} that the sum of the real part of the inner products inside the integral above equals
$$
H^{\left(e^{\pm i\theta}B_{Q},e^{\mp i\theta}B^{*}_{Q}\right)}_{\cQ*\psi_{\epsilon}}\left[(f,g);(D_{V} f,D_{V} g)\right].
$$
We now apply Corollary~\ref{c: convexity} with $\cH=\cH_{Q}$ and $B=B_{Q}$, and we finish the proof of the theorem for $p>2$ exactly as in the proof of Theorem~\ref{t: fc principal} (see Section~\ref{s: proof of mult thm}). The theorem for $r=q=p/(p-1)<2$ follows by a duality argument.
\end{proof}

\subsection*{Acknowledgements}
The first author was partially supported by the Gruppo Nazionale per l'Analisi Matematica, la Probabilit\`a e le loro Applicazioni (GNAMPA) of the Istituto Nazionale di Alta Matematica (INdAM). The second author was partially supported by the Ministry of Higher Education, Science and Technology of Slovenia (research program Analysis and Geometry, contract no. P1-0291).

\bibliographystyle{amsxport}
\bibliography{biblio1}

\end{document}